\newtheorem{defn0}{Definition}[section]
\newtheorem{prop0}[defn0]{Proposition}
\newtheorem{thm0}[defn0]{Theorem}
\newtheorem{lemma0}[defn0]{Lemma}
\newtheorem{claim0}[defn0]{Claim}
\newtheorem{corollary0}[defn0]{Corollary}
\newtheorem{example0}[defn0]{Example}
\newtheorem{remark0}[defn0]{Remark}
\newtheorem{assumption0}[defn0]{Assumption}
\newtheorem{conjecture0}[defn0]{Conjecture}
\newtheorem{notation0}[defn0]{Notation}
\newtheorem{question0}[defn0]{Question}
\newenvironment{definition}{\begin{defn0}\rm}{\end{defn0}}
\newenvironment{proposition}{\begin{prop0}}{\end{prop0}}
\newenvironment{theorem}{\begin{thm0}}{\end{thm0}}
\newenvironment{lemma}{\begin{lemma0}}{\end{lemma0}}
\newenvironment{corollary}{\begin{corollary0}}{\end{corollary0}}
\newenvironment{remark}{\begin{remark0}\rm}{\end{remark0}}
\newenvironment{assumption}{\begin{assumption0}\rm}{\end{assumption0}}
\newcommand{\M}{\mathrm{M}}
\newcommand{\GL}{{\mathrm{GL}}}
\newcommand{\End}{{\mathrm{End}}}
\newcommand{\Z}{{\mathbb Z}}
\newcommand{\A}{{\mathbb A}}
\newcommand{\Q}{{\mathbb Q}}
\newcommand{\C}{{\mathbb C}}
\newcommand{\R}{{\mathbb R}}
\newcommand{\N}{{\mathbb N}}
\newcommand{\G}{{\mathbb G}}
\newcommand{\PP}{{\mathbb P}}
\newcommand{\cR}{{\mathcal R}}
\newcommand{\cC}{{\mathcal C}}
\newcommand{\cH}{{\mathcal H}}
\newcommand{\cL}{{\mathcal L}}
\newcommand{\cM}{{\mathcal M}}
\newcommand{\cP}{{\mathcal P}}
\newcommand{\cO}{{\mathcal O}}
\newcommand{\cW}{{\mathcal W}}
\newcommand{\Sym}{{\mathrm {Sym}}}
\newcommand{\Hom}{{\mathrm {Hom}}}
\title{Extremal $p$-adic $L$-functions}
\author{Santiago Molina Blanco}
\begin{document}

\maketitle

\begin{abstract}
In this note we propose a new construction of cyclotomic $p$-adic L-functions attached to classical modular cuspidal eigenforms. This allows us to cover most known cases to date and provides a method which is amenable to generalizations to automorphic forms on arbitrary groups. In the classical setting of $\GL_2$ over $\Q$ this allows us to construct the $p$-adic $L$-function in the so far uncovered {\em extremal} case which arises under the unlikely hypothesis that  $p$-th Hecke polynomial has a double root. Although Tate's conjecture implies that this case should never take place for $\GL_2/\Q$, the obvious generalization does exist in nature for Hilbert cusp forms over totally real number fields of even degree and this article proposes a method which should adapt to this setting.

We further study the admissibility and the interpolation properties of these \emph{extremal $p$-adic L-functions} $L_p^{\rm ext}(f,s)$, and relate $L_p^{\rm ext}(f,s)$ to the two-variable $p$-adic L-function interpolating cyclotomic $p$-adic L-functions along a Coleman family. 
\end{abstract}

\tableofcontents

\section{Introduction}

Let $f \in S_{k+2}(\Gamma_1(N),\epsilon)$ be a modular cuspidal eigeform for $\Gamma_1(N)$ with nebentypus $\epsilon$ and weight $k+2$. A very important topic in modern Number Theory is the study of the complex L-function $L(s,\pi)$ attached to the automorphic representation $\pi$ of $\GL_2(\A)$ generated by $f$. Understanding this complex valued analytic function is the key point for some of the most important problems in mathematics such as the \emph{Birch and Swinnerton-Dyer conjecture}.

Back in the middle of the seventies, Vishik \cite{Vis} and Amice-V\'elu \cite{A-V} defined a $p$-adic measure $\mu_{f,p}$ of $\Z_p^\times$ associated with $f$, under the hypothesis that $p$ does not divide $N$. 
The construction of this measure was the starting point for the theory of $p$-adic L-functions attached to modular cuspforms. 
The $p$-adic L-function $L_p(f,s)$ 
is a $\C_p$-valued analytic function which interpolates the critical values of the L-function $L(s,\pi)$. The function $L_p(f,s)$ is defined by means of $\mu_{f,p}$ as
\[
L_p(f,s):=\int_{\Z_p^\times}{\rm exp}(s\cdot{\rm log(x)})d\mu_{f,p}(x),
\]
where ${\rm exp}$ and ${\rm log}$ are respectively the $p$-adic exponential and $p$-adic logarithm functions.

Mazur, Tate and Teitelbaum extended in \cite{MTT86} the definition of $\mu_{f,p}$ to more general situations and proposed a $p$-adic analogue of the Birch and Swinnerton-Dyer conjecture, replacing the complex L-function $L(s,\pi)$ with its $p$-adic counterpart $L_p(f,s)$.
It has been shown that $L_p(f,s)$ is directly related with the ($p$-adic, or eventually $l$-adic) cohomology of modular curves, and this makes the $p$-adic Birch and Swinnerton-Dyer conjectures become more tractable. 
In fact, the theory of $p$-adic L-functions has grown tremendously during the last years. Many results, whose complex counterparts are inaccessible with current techniques, have been proven in the analogous $p$-adic scenarios.

In this note we provide a reinterpretation of the construction of the $p$-adic measures $\mu_{f,p}$. Our approach exploits the theory of automorphic representations and, in that sense, it is similar to the construction provided by Spiess in \cite{Spi14}  for weights strictly greater than $2$. This opens the door to possible generalizations of $p$-adic measures attached to automorphic representations of $\GL_2(\A_F)$ of any weight, for any number field $F$.

We are able to construct $\mu_{f,p}$ in every possible situation except when the local automorphic representation $\pi_p$ attached to $f$ is \emph{supercuspidal}, and 
we hope  our work clarifies why it is not expected to find good $p$-adic measures in the latter case. 

We obtain a genuinely new construction in the unlikely setting where the $p$-th Hecke polynomial has a double root. In this case, our main result (Theorem \ref{mainthm}) reads as follows:
\begin{theorem}
Let $f =\sum_{n\geq 1} a_n q^n\in S_{k+2}(\Gamma_1(N),\epsilon)$ be a cuspform, and assume that $P(X):=X^2-a_pX+\epsilon(p)p^{k+1}$ has a double root $\alpha$. Then there exists a locally analytic $p$-adic measure $\mu_{f,p}^{{\rm ext}}$ of $\Z_p^\times$ such that, for any locally polynomial character $\chi=\chi_0(x)x^m$ with $m\leq k$:
\begin{equation}\label{IntForInt}
\int_{\Z_p^\times}\chi d\mu_{f,p}^{{\rm ext}}=\frac{4\pi }{\Omega_f^\pm i^m}\cdot e_p^{\rm ext}(\pi_p,\chi_0)\cdot L\left(m-k+\frac{1}{2},\pi,\chi_0\right).
\end{equation}

Here $L\left(s,\pi,\chi_0\right)$ denotes the complex the $L$-function of $\pi$ twisted by $\chi_0$, and we have set
\[
e_p^{\rm ext}(\pi_p,\chi_0)=\left\{\begin{array}{ll}
(1-p^{-1})^{-1}\left(p^{k-m}\alpha^{-1}+p^{m-k-1}\alpha-2p^{-1}\right);&\chi_0\mid_{\Z_p^\times}=1;\\
-(1-p^{-1})^{-1}rp^{r(m-k-1)}\alpha^{r}\tau(\chi_0);&{\rm cond}(\chi_0)=r>0,
\end{array}\right.
\]
where $\tau(\chi_0)$ is the Gauss sum attached to $\chi_0$
\end{theorem}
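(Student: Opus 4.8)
The plan is to construct $\mu_{f,p}^{\rm ext}$ as a ``derivative'' of the family of measures attached to the non-critical slope stabilizations along a Coleman family through $f$, and then compute the integrals in \eqref{IntForInt} by differentiating the known interpolation formulas. The starting point is that when $P(X)=X^2-a_pX+\epsilon(p)p^{k+1}$ has distinct roots $\alpha,\beta$, each choice of root gives a $p$-stabilization $f_\alpha,f_\beta$ and a classical measure $\mu_{f_\alpha,p},\mu_{f_\beta,p}$ with the Amice--Vélu--Vishik interpolation property; the exceptional case is the coalescence $\alpha=\beta$, so heuristically one expects $\mu_{f,p}^{\rm ext}$ to behave like $\partial/\partial(\text{root})$ of $\mu_{f_\alpha,p}$ evaluated at the double root. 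Concretely I would invoke the two-variable $p$-adic $L$-function $L_p(\mathbf{f},k,s)$ interpolating the cyclotomic $p$-adic $L$-functions along a Coleman family $\mathbf f$ specializing to $f$ (existence of which is presumably set up earlier in the paper via the automorphic/overconvergent cohomology construction that replaces $\mu_{f,p}$), and define $\mu_{f,p}^{\rm ext}$ through the partial derivative in the weight direction, or equivalently via a suitable second-order jet in the Hecke-eigenvalue variable, at the point corresponding to $f$.

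The key steps, in order: (i) recall the automorphic construction of $\mu_{f,p}$ from the body of the paper in a form that manifestly extends over the eigencurve — that is, produce a cohomology class valued in distributions that varies analytically and whose specialization at a non-critical-slope point recovers the classical measure; (ii) at the double-root point, show that the relevant local representation $\pi_p$ is (a twist of) Steinberg or, more precisely, that $\alpha^2=\epsilon(p)p^{k+1}$ forces $\pi_p$ to be an unramified twist of Steinberg, so the ``ordinary/critical'' dichotomy degenerates and one genuinely needs the derivative construction; (iii) define $\mu_{f,p}^{\rm ext}$ by differentiating, carefully checking that the result is still a locally analytic distribution on $\Z_p^\times$ (admissibility — this is where the growth conditions enter) rather than merely a formal gadget; (iv) evaluate $\int_{\Z_p^\times}\chi\,d\mu_{f,p}^{\rm ext}$ for $\chi=\chi_0(x)x^m$ by differentiating the interpolation formula for the two-variable (or one-variable-in-a-family) $p$-adic $L$-function: the factor $e_p^{\rm ext}(\pi_p,\chi_0)$ should emerge as the derivative of the usual Euler-type factor $(1-\alpha^{-1}\chi_0(p)p^{m-?})(1-\beta^{-1}\cdots)$-style expression, with the linear-in-$r$ term $-rp^{r(m-k-1)}\alpha^r\tau(\chi_0)$ coming precisely from $\tfrac{d}{d\beta}$ of $\beta^{-r}$-type factors at $\beta=\alpha$, and the trivial-character case producing the symmetric combination $p^{k-m}\alpha^{-1}+p^{m-k-1}\alpha-2p^{-1}$ as a second-order Taylor coefficient; (v) match constants ($4\pi/\Omega_f^\pm i^m$, the $(1-p^{-1})^{-1}$, the Gauss sum normalization) against the classical formulas.

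I would carry out step (iv) by writing the classical interpolation with both roots present, $\int \chi\, d\mu_{f,p}^{(\gamma)} = C\cdot(1-\gamma^{-1}p^{m-k-1}\chi_0(p)^{-1}\cdots)(1-\gamma' p^{k-m}\cdots)\, L(\cdots)$ for $\{\gamma,\gamma'\}=\{\alpha,\beta\}$ with $\gamma\gamma'=\epsilon(p)p^{k+1}$, treat this as a function of $\gamma$ alone via $\gamma'=\epsilon(p)p^{k+1}/\gamma$, and then the extremal measure's integral is (up to normalization) the value of $\tfrac{d}{d\gamma}$ of this expression at $\gamma=\alpha$ — a single-variable calculus computation producing exactly the two displayed cases once one separates $\chi_0$ trivial (where $L$ has its full value and both Euler factors are nontrivial, giving the $2p^{-1}$ cross term) from $\chi_0$ ramified of conductor $r$ (where only the $\alpha^r$-power Euler factor survives and differentiating $\gamma^{-r}$ brings down the factor $-r$).

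The hard part will be step (iii)--(iv) done rigorously rather than formally: one must show that the object obtained by differentiation along the eigencurve actually is a \emph{locally analytic} measure with the right admissibility (so that $L_p^{\rm ext}(f,s)$ is a genuine analytic function of $s$, not just a formal series), and — the genuinely delicate point — that at the double-root point the Coleman family construction still produces a \emph{nonzero} two-dimensional generalized eigenspace to differentiate in, i.e. that the relevant overconvergent/automorphic cohomology is not étale at $f$ precisely because of the double root, so that the derivative is well-defined and nontrivial. Establishing this local non-smoothness (equivalently, that $f$ is a ramification point of the eigencurve over weight space in this scenario, or that the local Galois/automorphic deformation is genuinely a square-zero extension) and controlling the period normalization $\Omega_f^\pm$ uniformly in the family is where the real work lies; the combinatorial calculus in (iv) is then routine once the framework is in place.
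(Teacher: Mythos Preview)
Your approach differs fundamentally from the paper's, and step (ii) contains a real error. The double-root condition $\alpha^2=\epsilon(p)p^{k+1}$ does \emph{not} force $\pi_p$ to be a twist of Steinberg. On the contrary, for the Hecke polynomial $X^2-a_pX+\epsilon(p)p^{k+1}$ to have two (coinciding) nonzero roots one needs $\epsilon(p)\neq 0$, i.e.\ $p\nmid N$, so $\pi_p$ is an \emph{unramified principal series} $\pi(\chi_1,\chi_2)$, and the double root means precisely $\chi_1=\chi_2=\chi$. The Steinberg case is the opposite regime ($p\mid N$, a single root from the start), and your ``derivative of the Steinberg Euler factor'' heuristic in (iv) is aimed at the wrong local representation.

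The paper does not construct $\mu_{f,p}^{\rm ext}$ by differentiating along a Coleman family. It works directly in the Kirillov model of $\pi_p=\pi(\chi,\chi)$: there the functions behave like $C_1|y|^{1/2}\chi(y)+C_2\,v_p(y)|y|^{1/2}\chi(y)$ for $|y|$ small, and the extremal measure comes from choosing $\Psi=v_p\cdot|\cdot|^{1/2}\chi$ in the morphism $\delta$ of \eqref{defdelta}. The Euler factors $e_p^{\rm ext}$ are then obtained by an explicit local zeta-integral computation (Proposition \ref{EulerFactors}(ii)), the identity $\sum_{n>0}nx^n=x(1-x)^{-2}$ producing the unramified term, and $(k+1)/2$-admissibility follows from Theorem \ref{thmadm} since $v_p(\gamma)=(k+1)/2$ for this $\Psi$. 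The link with the two-variable $p$-adic $L$-function is established only \emph{afterwards}, in \S\ref{secLmu}, as a consequence of the direct construction rather than as its definition.

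Your derivative strategy also carries an intrinsic ambiguity that would prevent you from landing on the stated $e_p^{\rm ext}$ on the nose. As the paper itself proves in its final corollary, $\partial\cL_p/\partial t$ at the double-root point lies in $\alpha^{-1}\mu_{f,p}^{\rm ext}+\bar\Q_p\cdot\mu_{f,p}$, not in a single distribution: the choice of generator $\phi^\pm$ of $\cM_\pm^\vee$ is only canonical up to a unit $1+c\varepsilon$, which shifts the $\varepsilon$-coefficient by $c\mu_{f,p}$. So differentiating the family determines $\mu_{f,p}^{\rm ext}$ only modulo $\bar\Q_p\cdot\mu_{f,p}$, and hence the Euler factors only modulo multiples of the classical $e_p(\pi_p,\chi_0)$, which do not vanish here. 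You would need an external normalization to recover the precise formulas in the statement, whereas the Kirillov-model construction fixes a canonical $\delta$ from the outset.
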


We call $\mu_{f,p}^{{\rm ext}}$ \emph{the extremal $p$-adic measure}.
Coleman and Edixhoven showed in \cite{ColEd} that $P(X)$ never has double roots if the weight is 2, namely, $k = 0$. Moreover, they showed that assuming \emph{Tate's conjecture} the polynomial $P(X)$ can never be a square for general weights $k+2$. Since we believe in Tate's conjecture, we expect this situation never occur, hence surely the hypothesis of the theorem is never fulfilled and 
$\mu_{f,p}^{\rm ext}$ can never be constructed. Since these extremal scenarios do appear in nature for other reductive groups, for instance for $\GL_2/F$ where $F$ is a totally real number field of even degree over $\Q$  (see \cite[\S 3.3.1]{Chi15}), we believe our result above is potentially powerful. We plan to employ the approach of this note to cover these cases in the near future.

Notice that in the unlikely situation of the above theorem, the two $p$-adic measures $\mu_{f,p}$ and $\mu_{f,p}^{{\rm ext}}$ coexist. One can thus define the $p$-adic L-function
\[
L_p^{\rm ext}(f,s):=\int_{\Z_p^\times}{\rm exp}(s\cdot{\rm log(x)})d\mu_{f,p}^{{\rm ext}}(x),
\]
called \emph{the extremal $p$-adic L-function}, which coexists with $L_p(f,s)$, and satisfies the interpolation property \eqref{IntForInt} with completely different Euler factors $e_p^{\rm ext}(\pi_p,\chi_0)$ from the classical scenario. 


In the non-critical setting, namely when the roots of the Hecke polynomial are distinct, there is a classical result that relates $\mu_{f,p}$ to a two-variable $p$-adic L-function $\cL_p$ that interpolates $\mu_{g,p}$ as $g$ ranges over a Coleman family passing through $f$. In \cite{BW}, Betina and Williams have recently extended this result to this critical setting. They construct an element
\[
\cL_p\in T\hat\otimes_{\Q_p}\cR,
\] 
where $\cR$ is the $\Q_p$-algebra of locally analytic distributions of $\Z_p^\times$ and $T$ is certain Hecke algebra defining a connected component of the eigencurve. Since an element of the Coleman family corresponds to a morphism $g:T\rightarrow\bar\Q_p$, the function $\cL_p$ is characterized by the property 
\[
\cL_p=C(g)\cdot\mu_{g,p},
\]
where $C(g)\in \bar\Q_p^\times$ is a constant normalized so that $C(f)=1$.
The following result proved in \S \ref{secLmu} relates $\cL_p$ to our extremal $p$-adic measure $\mu_{f,p}^{{\rm ext}}$:
\begin{theorem}
Let $t\in T$ the element corresponding to $U_p-\alpha$. Then
\[
\frac{\partial \cL_p}{\partial t}(f)\in  \alpha^{-1}\mu_{f,p}^{{\rm ext}}+\bar\Q_p\mu_{f,p}.
\]
\end{theorem}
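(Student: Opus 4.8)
The plan is to differentiate the defining relation $\cL_p = C(g)\cdot\mu_{g,p}$ in the direction $t = U_p-\alpha$ and evaluate at $f$. The key point is that $f$ is the point of the eigencurve where the two roots of the Hecke polynomial collide, so that the Coleman family through $f$ is ramified over weight space: the parameter $t$ vanishes at $f$, and $U_p-\alpha$ is (up to a unit) a uniformizer at that point. Thus $\cL_p$, viewed as an $\cR$-valued function on $T$, vanishes at $f$ (since $\mu_{g,p}$ is finite and $C(g)$ is regular), and its first-order behaviour at $f$ is governed precisely by $\partial\cL_p/\partial t(f)$. First I would make precise, using the structure of $T$ near $f$ established in \cite{BW} (the local ring is a ramified quadratic extension of the weight algebra, with $t$ a uniformizer), that the derivation $\partial/\partial t$ makes sense and that the two classical stabilizations $\mu_{f_\alpha,p}$ and the "extra" infinitesimal direction together span the fibre.

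Next I would compute the right-hand side interpolation. Both $\partial\cL_p/\partial t(f)$ and $\mu_{f,p}^{\rm ext}$ are locally analytic distributions on $\Z_p^\times$, hence are determined by their values against the locally polynomial characters $\chi = \chi_0(x)x^m$, $m\le k$. For $\cL_p$ the interpolation of $\mu_{g,p}$ along the family (the Mazur--Kitagawa two-variable formula, in the form proved in \cite{BW}) gives $\int_{\Z_p^\times}\chi\,d\cL_p$ as $C(g)$ times the standard one-variable interpolation factor $e_p(\pi_{g,p},\chi_0)\cdot L(\cdots)$, where the allowable $U_p$-eigenvalue is the analytic function $a_p(g)$ specializing to $\alpha$ at $f$. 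Applying $\partial/\partial t = (\partial a_p/\partial t)^{-1}\partial/\partial a_p$ and evaluating at $f$ then reduces the computation to differentiating the explicit Euler factor $e_p(\pi_p,\chi_0)$, which is a rational function of the eigenvalue $\beta$ (the "allowed" root), with respect to $\beta$ at $\beta=\alpha$; one also picks up a term from $\partial C(g)/\partial t(f)$ times $\mu_{f_\alpha,p}$, which lands in the $\bar\Q_p\mu_{f,p}$ part. The claim is that $(\partial a_p/\partial t)(f)$ is a unit and, after accounting for it, the $\beta$-derivative of $e_p$ at $\alpha$ reproduces, up to the factor $\alpha^{-1}$, exactly the extremal Euler factor $e_p^{\rm ext}(\pi_p,\chi_0)$ of Theorem \ref{mainthm}. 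Concretely: for $\chi_0$ trivial, $e_p$ involves $(1-p^m\beta^{-1})(1-p^{k-m}\epsilon(p)\beta^{-1})/(\text{denominator})$, and differentiating at $\beta=\alpha$ with $\alpha^2=\epsilon(p)p^{k+1}$ produces the symmetric combination $p^{k-m}\alpha^{-1}+p^{m-k-1}\alpha-2p^{-1}$; for $\chi_0$ of conductor $r>0$, $e_p$ is essentially $\beta^{-r}$ times a Gauss sum, whose $\beta$-derivative is $-r\beta^{-r-1}\tau(\chi_0)$, matching the second line.

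The main obstacle I anticipate is bookkeeping the normalizations so that the constant $\alpha^{-1}$ comes out correctly and so that the ambiguity genuinely collapses to a $\bar\Q_p$-multiple of $\mu_{f,p}$ rather than something larger: this requires (i) identifying $(\partial a_p/\partial t)(f)$ with a specific unit — here one uses that $t$ corresponds to $U_p-\alpha$ itself, so $a_p(g)-\alpha$ and $t$ agree to first order and the derivative is essentially $1$; (ii) controlling $(\partial C/\partial t)(f)$, which is harmless because whatever its value, it multiplies $C(f)\mu_{f,p}=\mu_{f,p}$; and (iii) checking that the differentiated $L$-value terms assemble into a single distribution with the stated interpolation, i.e.\ that differentiation commutes with the $\cR$-adic interpolation — this is where the locally analytic (as opposed to merely bounded) nature of $\mu_{f,p}^{\rm ext}$, and the admissibility bounds from the construction in Theorem \ref{mainthm}, are essential. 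Once these normalization points are pinned down, comparing the two sides character-by-character via \eqref{IntForInt} and the classical interpolation formula finishes the proof.
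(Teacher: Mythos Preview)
Your approach is genuinely different from the paper's, and it contains a gap.

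First, a slip: $\cL_p$ does not vanish at $f$; by the defining relation, $\cL_p(f)=C(f)\mu_{f,p}=\mu_{f,p}\neq 0$. What is true (and what you use later) is that $t=U_p-\alpha$ is tangent to the fibre of $\kappa$ at $x$, so $\partial k_g/\partial t(f)=0$.

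The real gap is in ``differentiate the interpolation formula''. Writing $\int\chi\,d\cL_p(g)=C(g)\,e_p(\alpha_g,\chi)\,Q_\chi(g)$ with $Q_\chi(g)=\frac{4\pi}{\Omega_g^\pm i^m}L(m-k_g+\tfrac12,\pi_g,\chi_0)$, your product rule gives
\[
\partial_t\Bigl(\int\chi\,d\cL_p\Bigr)(f)-\alpha^{-1}\int\chi\,d\mu_{f,p}^{\rm ext}=C'(f)\int\chi\,d\mu_{f,p}+e_p(\alpha,\chi)\,Q_\chi'(f),
\]
using the (correct) identity $\partial_{\alpha_g}e_p|_{\alpha}=\alpha^{-1}e_p^{\rm ext}$. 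For the right side to lie in $\bar\Q_p\cdot\mu_{f,p}$ you need $Q_\chi'(f)/Q_\chi(f)$ to be \emph{independent of $\chi$}. You assume this by declaring the $L$-value and period ``constant to first order'', but ramification only kills $\partial k_g/\partial t$; the Hecke eigenvalues $a_\ell(g)$ for $\ell\neq p$, hence $\pi_g$ and its complex $L$-values, and the period $\Omega_g^\pm$ all move in the $t$-direction. Even read $p$-adically as $Q_\chi(g)=\int\chi\,d\mu_{g,p}/e_p(\alpha_g,\chi)$, there is no a~priori reason its logarithmic derivative at $f$ should be $\chi$-independent; establishing that is essentially the theorem itself. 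Your Euler-factor differentiation is a useful consistency check, not a proof.

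The paper bypasses the interpolation formula entirely. It constructs an overconvergent modular symbol $\hat\varphi_{\rm ext}^\pm\in\Hom_\Gamma^\pm(\Delta_0,D_k^\dagger[1])$ with ${\rm Mel}(\hat\varphi_{\rm ext}^\pm)=\mu_{f,p}^{{\rm ext},\pm}$ and computes directly that $(U_p-\alpha)\hat\varphi_{\rm ext}^\pm=\alpha\,\hat\varphi^\pm$. This identifies the generalized $U_p$-eigenspace at $x$ with $\bar\Q_p\hat\varphi^\pm\oplus\bar\Q_p\hat\varphi_{\rm ext}^\pm$, so $(\mathbb{T}^\pm_{w,\nu})_{(x)}\simeq\bar\Q_p[\varepsilon]$ with $\varepsilon=U_p-\alpha$. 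Since $\cM_\pm$ is free of rank one over $T$ (Betina--Williams), evaluating $\cL_p^\pm$ at the thickened point $x[\varepsilon]:T\to\bar\Q_p[\varepsilon]$ amounts to applying ${\rm Mel}$ to the basis $\hat\varphi^\pm,\hat\varphi_{\rm ext}^\pm$, yielding $\cL_p^\pm(x[\varepsilon])=\mu_x^\pm+\alpha^{-1}\mu_x^{{\rm ext},\pm}\varepsilon$. The $\bar\Q_p\mu_{f,p}$-ambiguity comes precisely from the freedom $\phi_x^\pm\mapsto(1+c\varepsilon)\phi_x^\pm$ in the choice of $T$-generator of $\cM_\pm^\vee$.
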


This last result implies that these extremal $p$-adic L-functions are analogous to the so-called secondary $p$-adic L-functions defined by Bella\"iche in \cite{Be}.

\subsubsection*{Acknowledgements.}
The author would like to thank David Loeffler, V\'ictor Rotger and Chris Williams for their comments and discussions throughout the development of this paper.

The author is supported in part by DGICYT Grant MTM2015-63829-P.
This project has received funding from the European Research Council
(ERC) under the European Union's Horizon 2020 research and innovation
programme (grant agreement No. 682152).

\subsection{Notation}

For any ring $R$,
we denote by 
$\cP(k)_R:=\Sym^{k}(R^2)$ the $R$-module of homogeneous polynomials in two variables with coefficients in $R$, endowed with an action of $\GL_2(R)$:
\begin{equation}\label{actpol}
\left( \left(\begin{array}{cc}a&b\\c&d\end{array}\right)\ast P\right)(x,y):=
P\left((x,y)\left(\begin{array}{cc}a&b\\c&d\end{array}\right)\right).
\end{equation}
We denote by $V(k)_R:={\rm Hom}_R(\cP(k)_R,R)$ and $V(k):=V(k)_\C$.
Similarly, we define the (right-) action of $A\in\GL_2(\R)^+$ on the set of modular forms of weight $k+2$
\[
(f\mid A)(z):=\rho(A,z)^{k+2}\cdot f(Az);
\qquad \rho\left(\left(\begin{array}{cc}a&b\\c&d\end{array}\right),z\right):=\frac{(ad-bc)}{cz+d}.
\]

We will denote by $dx$ the Haar measure of $\Q_p$ so that ${\rm vol}(\Z_p)=1$. Similarly, we write $d^\times x$ for the Haar measure of $\Q_p^\times$ so that ${\rm vol}(\Z_p^\times)=1$. By abuse of notation, will will also denote by $d^\times x$ the corresponding Haar measure of the group of ideles $\A^\times$.   

For any local character $\chi:\Q_p^\times\rightarrow\C^\times$, write
\[
L(s,\chi)=\left\{\begin{array}{lc}(1-\chi(p)p^{-s})^{-1},&\chi\mbox{ unramified}\\
1,&\mbox{otherwise.}\end{array}\right.
\]

\section{Local integrals}

\subsection{Gauss sums}

In this section $\psi:\Q_p\rightarrow\C^\times$ will be a non-trivial additive character such that $\ker(\psi)=\Z_p$.

\begin{lemma}\label{intpsi}
For all $s\in\Q_p^\times$ and $n>0$, we have
\[
\int_{s+p^n\Z_p}\psi(ax)dx=p^{-n}\psi(sa)\cdot1_{\Z_p}(p^na).
\]
In particular,
\[
\int_{\Z_p^\times}\psi(ax)dx=\left\{\begin{array}{ll}(1-p^{-1}),&a\in\Z_p\\-p^{-1},&a\in p^{-1}\Z_p^\times\\0,&\mbox{otherwise}\end{array}\right.
\]
\end{lemma}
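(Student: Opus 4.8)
The plan is to reduce both assertions to the single elementary fact that, for $b\in\Q_p$,
\[
\int_{\Z_p}\psi(by)\,dy=1_{\Z_p}(b).
\]
Indeed, if $b\in\Z_p$ then $by\in\Z_p=\ker(\psi)$ for all $y\in\Z_p$, so $\psi(by)\equiv 1$ and the integral is $\vol(\Z_p)=1$; if $b\notin\Z_p$ then $y\mapsto\psi(by)$ is a non-trivial continuous character of the compact group $(\Z_p,+)$, hence integrates to $0$ by orthogonality.

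For the first displayed formula I would substitute $x=s+p^ny$ with $y$ ranging over $\Z_p$. Since $\vol(s+p^n\Z_p)=\vol(p^n\Z_p)=p^{-n}$, the change of variables introduces the factor $p^{-n}$, and using additivity of $\psi$ we get
\[
\int_{s+p^n\Z_p}\psi(ax)\,dx=p^{-n}\psi(sa)\int_{\Z_p}\psi(ap^ny)\,dy=p^{-n}\psi(sa)\cdot 1_{\Z_p}(p^na),
\]
where the last equality is the fact above applied with $b=p^na$. For the ``in particular'' statement I would write $\Z_p^\times=\Z_p\setminus p\Z_p$ and split the integral as $\int_{\Z_p^\times}=\int_{\Z_p}-\int_{p\Z_p}$. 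The first term equals $1_{\Z_p}(a)$ by the fact above with $b=a$; for the second, the substitution $x=py$ (with $\vol(p\Z_p)=p^{-1}$) gives $p^{-1}\int_{\Z_p}\psi(apy)\,dy=p^{-1}1_{\Z_p}(pa)$. Hence
\[
\int_{\Z_p^\times}\psi(ax)\,dx=1_{\Z_p}(a)-p^{-1}1_{\Z_p}(pa),
\]
and the three cases of the statement are read off according to whether $a\in\Z_p$ (both terms contribute, total $1-p^{-1}$), $a\in p^{-1}\Z_p^\times$ (only the second term contributes, total $-p^{-1}$), or $\ord(a)\le -2$ (both indicator functions vanish, total $0$).

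There is essentially no obstacle; the only points deserving a word of care are the normalization $\vol(p^n\Z_p)=p^{-n}$ used in each change of variables, and the orthogonality argument that a non-trivial character of $\Z_p$ integrates to zero. Both are standard, so the proof is a short computation once the auxiliary fact is recorded.
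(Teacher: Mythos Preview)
Your proof is correct. For the first formula your change of variables $x=s+p^ny$ is exactly the paper's computation (the paper writes it as $\int_{p^n\Z_p}\psi((s+x)a)\,dx$ and then rescales). For the second formula you take a slightly different route: the paper decomposes $\Z_p^\times=\bigsqcup_{s\in(\Z/p\Z)^\times}(s+p\Z_p)$ and applies the first formula with $n=1$, obtaining $p^{-1}1_{\Z_p}(pa)\sum_{s\in(\Z/p\Z)^\times}\psi(sa)$, then evaluates this character sum case by case; you instead use the complement decomposition $\int_{\Z_p^\times}=\int_{\Z_p}-\int_{p\Z_p}$ and reduce immediately to the auxiliary fact. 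Your approach is marginally cleaner since it sidesteps the explicit character sum, but both are one-line computations once the basic identity $\int_{\Z_p}\psi(by)\,dy=1_{\Z_p}(b)$ is in hand.
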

\begin{proof}
We compute
\begin{eqnarray*}
\int_{s+p^n\Z_p}\psi(xa)d x&=&\int_{p^n\Z_p}\psi((s+x)a)d x=\psi(sa)\int_{\Z_p}|xp^n|\psi(xp^na)d^\times x\\
&=&p^{-n}\psi(sa)\int_{\Z_p}\psi(xp^na)d x=p^{-n}\psi(sa)\cdot1_{\Z_p}(p^na).
\end{eqnarray*}
To deduce the second part, notice that
\[
\int_{\Z_p^\times}\psi(ax)dx=\sum_{s\in(\Z/p\Z)^\times}\int_{s+p\Z_p}\psi(ax)dx=p^{-1}\sum_{s\in(\Z/p\Z)^\times}\psi(sa)1_{\Z_p}(pa),
\]
hence the result follows.
\end{proof}
\begin{lemma}\label{psichi}
For all $\chi:\Z_p^\times\rightarrow\C^\times$ be a character of conductor $n\geq 1$. Let $1+p^n\Z_p\subset U\subseteq \Z_p^\times$ be a open subgroup. We have
\[
\int_{U}\chi(x)\psi(ax)d^\times x=0,\qquad\mbox{unless $|a|=p^n$.}
\]
\end{lemma}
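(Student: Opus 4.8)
Write $I:=\int_U\chi(x)\psi(ax)\,d^\times x$; the assertion is that this can be nonzero only when $|a|=p^n$. The plan is to play two changes of variables $x\mapsto ux$ against each other, exploiting translation invariance of $d^\times x$ on $U$: translating by $u\in 1+p^n\Z_p$ will kill the range $|a|>p^n$, and translating by $u\in 1+p^{n-1}\Z_p$ (to be read as $\Z_p^\times$ if $n=1$) will kill the range $|a|<p^n$. In both cases the input is the conductor hypothesis, which pins down where $\chi$ is, and is not, trivial.

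For $|a|>p^n$: since $\chi$ has conductor $n$ it is trivial on $1+p^n\Z_p\subseteq U$, so the substitution $x\mapsto ux$ with $u\in 1+p^n\Z_p$ (which preserves $U$, $d^\times x$ and $\chi$) gives $I=\int_U\chi(x)\psi(aux)\,d^\times x$ for every such $u$. Averaging over $u\in 1+p^n\Z_p$ with normalized Haar measure and writing $u=1+p^nt$, $t\in\Z_p$, the inner average of $\psi(aux)$ equals $\psi(ax)\int_{\Z_p}\psi(ap^nxt)\,dt=\psi(ax)\,1_{\Z_p}(ap^nx)$ by the orthogonality relation $\int_{\Z_p}\psi(bt)\,dt=1_{\Z_p}(b)$ already used in the proof of Lemma~\ref{intpsi}; since $x$ is a unit this is $\psi(ax)\,1_{\Z_p}(ap^n)$. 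Hence $I=1_{\Z_p}(ap^n)\cdot I$, which forces $I=0$ as soon as $1_{\Z_p}(ap^n)=0$, i.e.\ as soon as $|a|>p^n$. (One can equally decompose $U$ into the additive cosets $c+p^n\Z_p$ and invoke Lemma~\ref{intpsi} termwise.)

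For $|a|<p^n$, i.e.\ $|a|\le p^{n-1}$: then $ap^{n-1}\Z_p\subseteq\Z_p$, so $a(u-1)\in\Z_p$ for every $u\in 1+p^{n-1}\Z_p$, whence $\psi(a(u-1)x)=1$ for $x\in\Z_p^\times$ and therefore $\psi(aux)=\psi(ax)$ on all of $U$. Applying $x\mapsto u_0x$ for $u_0\in 1+p^{n-1}\Z_p$ then gives $I=\chi(u_0)\int_U\chi(x)\psi(au_0x)\,d^\times x=\chi(u_0)\int_U\chi(x)\psi(ax)\,d^\times x=\chi(u_0)\,I$. As the conductor of $\chi$ equals $n$ exactly, $\chi$ is nontrivial on $1+p^{n-1}\Z_p$, so we may take $u_0$ there with $\chi(u_0)\ne 1$, and then $(1-\chi(u_0))I=0$ gives $I=0$. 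Together with the previous case, $I$ vanishes unless $|a|=p^n$.

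I expect the last case to be the main obstacle: the substitution $x\mapsto u_0x$ is a self-map of $U$ only if $1+p^{n-1}\Z_p\subseteq U$, and the argument bites only if $\chi$ really is nontrivial there, so this is exactly where the hypotheses on the size of $U$ and on the exact conductor of $\chi$ must be used — in contrast to the vanishing for $|a|>p^n$, which needs only $1+p^n\Z_p\subseteq U$ and is unconditional. As a byproduct, the coset computation in the first step identifies the (potentially nonzero) value of $I$ at $|a|=p^n$ with a Gauss sum $\sum_c\chi(c)\psi(ca)$ over representatives $c$ of $U/(1+p^n\Z_p)$, up to the constant relating $d^\times x$ and $dx$ on $\Z_p^\times$ — which is why that value is genuinely attained and how the lemma feeds into the Gauss-sum factors appearing later.
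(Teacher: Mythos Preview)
Your proof is correct and follows essentially the same route as the paper's: the paper handles $|a|>p^n$ by the coset decomposition $U=\bigsqcup_s(s+p^n\Z_p)$ and Lemma~\ref{intpsi} (your parenthetical alternative), and handles $|a|<p^n$ by exactly the substitution $x\mapsto x(1+p^{n-1})$ you use, deducing $I=\chi(1+p^{n-1})I$. Your version is in fact slightly more careful---you pick a generic $u_0\in 1+p^{n-1}\Z_p$ with $\chi(u_0)\neq 1$ rather than the fixed element $1+p^{n-1}$, and you correctly flag that the second substitution preserves $U$ only when $1+p^{n-1}\Z_p\subseteq U$, a hypothesis the paper tacitly uses as well.
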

\begin{proof}
We compute
\begin{eqnarray*}
\int_{U}\chi(x)\psi(ax)d^\times x&=&\sum_{s\in U/(1+p^n\Z_p)}\chi(s)\int_{s+p^n\Z_p}\psi(ax)d x\\
&=&p^{-n}1_{\Z_p}(p^na)\sum_{s\in U/(1+p^n\Z_p)}\chi(s)\psi(sa).
\end{eqnarray*}
Hence the integral $I:=\int_{U}\chi(x)\psi(ax)d^\times x$ must be zero if $a\not\in p^{-n}\Z_p$. Moreover, if $a\in p^{-n+1}\Z_p$,
\[
I=\int_{U}\chi(x(1+p^{n-1}))\psi(ax(1+p^{n-1}))d^\times x=\chi(1+p^{n-1})I=0,
\]
and the result follows.
\end{proof}
We now define the Gauss sum:
\begin{definition}\label{defGS}
For any character $\chi$ of conductor $n\geq 0$, 
\[
\tau(\chi)=\tau(\chi,\psi)=p^n\int_{\Z_p^\times}\chi(x)\psi(-p^{-n}x)dx.
\]
\end{definition}

\section{Classical cyclotomic $p$-adic $L$-function}\label{Classical}

\subsection{Classical Modular symbols}

Let $f\in S_{k+2}(N,\epsilon)$ be a modular cuspidal newform of weight $(k+2)$ level $\Gamma_1(N)$ and nebentypus $\epsilon$. 

By definition, we have
\[
(f\mid A)(z)\cdot (A^{-1}P)(1,-z)\cdot dz=\det(A)\cdot f(Az)\cdot P(1,-Az)\cdot d(Az), \quad A\in \GL_2(\R)^+,
\]
for any $P\in V(k)$. Hence, if we denote by $\Delta_0$ the group of degree zero divisors of $\PP^1(\Q)$ with the natural action of $\GL_2(\Q)$, we obtain the \emph{Modular Symbol}:
\begin{eqnarray*}
&&\phi_f^{\pm}\in \Hom_{\Gamma_1(N)}(\Delta_0,V(k));\\ 
&&\phi_f^{\pm}(s-t)(P):=2\pi i\left(\int_t^s f(z)P(1,-z)dz\pm\int_{-t}^{-s} f(z)P(1,z)dz\right).
\end{eqnarray*}
Notice that $\Gamma_1(N)$-equivariance follows from relation
\begin{equation}\label{eqint}
\phi_{f\mid A}^\pm(D)=\det(A)\cdot A^{-1}\left(\phi_f^\pm(AD)\right),\qquad A\in\GL_2(\R)^+,
\end{equation}
deduced from the above equality and the fact that {\tiny $\left(\begin{array}{cc}1&\\&-1\end{array}\right)$} normalizes $\Gamma_1(N)$.
The following result is well known and classical:
\begin{proposition}\label{ratMS}
There exists periods $\Omega_\pm$ such that
\[
\phi_f^\pm=\Omega_{\pm}\cdot\varphi_f^\pm,
\]
for some $\varphi_f^\pm\in  \Hom_{\Gamma_1(N)}(\Delta_0,V(k)_{R_f})$, where $R_f$ is the ring of coefficients of $f$.
\end{proposition}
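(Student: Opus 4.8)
The plan is to exhibit $\phi_f^\pm$ as a vector in a one-dimensional subspace of $\Hom_{\Gamma_1(N)}(\Delta_0,V(k))$ that already carries an $R_f$-rational structure, and then to let $\Omega_\pm$ be the resulting period. Write $\mathrm{MS}_A:=\Hom_{\Gamma_1(N)}(\Delta_0,V(k)_A)$ for a subring $A\subseteq\C$, so that $\phi_f^\pm\in\mathrm{MS}_\C$. First I would recall the Hecke structure: the double cosets of $\Gamma_1(N)$ in $\GL_2(\Q)$ give operators $T_\ell$ ($\ell\nmid N$), $U_p$ and diamond operators on $\mathrm{MS}_\C$, and relation \eqref{eqint} (applied to $\GL_2(\Q)$-translates) makes $f\mapsto\phi_f$ equivariant for the Hecke actions on cusp forms and on modular symbols; moreover $\mathrm{diag}(1,-1)$ normalizes $\Gamma_1(N)$ and induces an involution $\iota$ commuting with the Hecke operators, with respect to which $\phi_f^\pm$ is by construction the $(\pm1)$-eigencomponent. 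Hence $\phi_f^\pm$ is a simultaneous eigenvector, with the Hecke eigensystem $\lambda_f$ of $f$ (which takes values in $R_f$) and with $\iota$-eigenvalue $\pm1$.

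Next I would use the Eichler--Shimura isomorphism to identify the cuspidal part of $\mathrm{MS}_\C$ with $S_{k+2}(\Gamma_1(N),\epsilon)\oplus\overline{S_{k+2}(\Gamma_1(N),\epsilon)}$; since $f$ is a newform of level $\Gamma_1(N)$, multiplicity one shows that the $\lambda_f$-eigenspace $\mathrm{MS}_\C[\lambda_f]$ is two-dimensional (the remaining ``Eisenstein/boundary'' part carries other eigensystems). As $\mathrm{diag}(1,-1)$ acts anti-holomorphically it exchanges the two summands above, so $\iota$ acts on $\mathrm{MS}_\C[\lambda_f]$ with both eigenvalues $\pm1$; thus each $\iota$-eigenspace $\mathrm{MS}_\C[\lambda_f]^\pm$ is a line containing the nonzero class $\phi_f^\pm$, and $\C\cdot\phi_f^\pm=\mathrm{MS}_\C[\lambda_f]^\pm$.

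The one real step, which I expect to be the main obstacle, is descending this line to $R_f$. The Hecke operators and $\iota$ are defined over $\Z$, so they act on $\mathrm{MS}_{R_f}$, and the Hecke algebra is module-finite over $R_f$; hence $\mathrm{MS}_{R_f}[\lambda_f]^\pm$, the intersection of $\ker(T-\lambda_f(T))$ over a finite set of generating operators $T$ with $\ker(\iota\mp1)$, is the kernel of an $R_f$-linear map of finitely generated $R_f$-modules. I would then invoke that, because $V(k)_{R_f}$ is finite free over $R_f$ and $\C$ is flat over $R_f$, the functor $\Hom_{\Gamma_1(N)}(\Delta_0,-)$ commutes with $-\otimes_{R_f}\C$ -- this is the point requiring care, and it can be checked by computing $\Hom_{\Gamma_1(N)}(\Delta_0,-)$ from a bounded complex of finite free $\Z[\Gamma_1(N)]$-modules (for instance via a finite CW-structure on the Borel--Serre compactification of the modular curve), after which the formation of kernels commutes with the flat base change. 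It follows that $\mathrm{MS}_{R_f}[\lambda_f]^\pm\otimes_{R_f}\C=\mathrm{MS}_\C[\lambda_f]^\pm$ is one-dimensional, so $\mathrm{MS}_{R_f}[\lambda_f]^\pm\neq0$; choosing any nonzero $\varphi_f^\pm$ in it, both $\phi_f^\pm$ and $\varphi_f^\pm$ lie on the line $\mathrm{MS}_\C[\lambda_f]^\pm=\C\cdot\varphi_f^\pm$, whence $\phi_f^\pm=\Omega_\pm\varphi_f^\pm$ for a unique (nonzero) $\Omega_\pm\in\C$. Apart from this base-change compatibility, everything is the standard dictionary between cusp forms, modular symbols and group cohomology.
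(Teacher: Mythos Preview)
The paper does not actually prove this proposition: it is stated as ``well known and classical'' and no argument is given. Your outline is the standard one (Hecke-equivariance of the modular symbol map, Eichler--Shimura plus multiplicity one to see that the $(\lambda_f,\pm)$-eigenspace is a line, then descent of that line to $R_f$ via flat base change), and it is correct. The only point worth a remark is the base-change step: to get $\mathrm{MS}_{R_f}\otimes_{R_f}\C\simeq\mathrm{MS}_\C$ one typically uses Manin's theorem that $\Delta_0$ admits a finite presentation as a $\Z[\Gamma_1(N)]$-module (so $\Hom_{\Gamma_1(N)}(\Delta_0,-)$ is computed by a two-term complex of finite free modules), which is a slightly more down-to-earth substitute for the Borel--Serre argument you sketch; either route is fine.
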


\subsection{Classical $p$-adic distributions}\label{classicdist}

Given $f\in S_{k+2}(N,\epsilon)$, we will assume that $f$ is an eigenvector for the Hecke operator $T_p$ with eigenvalue $a_p$. 
Let $\alpha$ be a non zero root of the Hecke polynomial $X^2-a_pX+\epsilon(p)p^{k+1}$

We will construct distributions $\mu_{f,\alpha}^\pm$ of locally polynomial functions of $\Z_p^\times$ of degree less that $k$ attached to $f$ (and $\alpha$ in case $p\nmid N$). Since the open sets $U(a,n)=a+p^n\Z_p$ ($a\in \Z_p^\times$ and $n\in \N$) form a basis of $\Z_p^\times$, it is enough to define the image of $P\left(1,\frac{x-a}{p^n}\right)1_{U(a,n)}(x)$, for any $P\in \cP(k)_\Z$
\begin{equation}\label{eqclassmu}
\int_{U(a,n)}P\left(1,\frac{x-a}{p^n}\right)d\mu^\pm_{f,\alpha}(x):=\frac{1}{\alpha^n}\varphi^\pm_{f_\alpha}\left(\frac{a}{p^n}-\infty\right)(P),
\end{equation}
where $f_{\alpha}(z):=f(z)-\beta\cdot f(pz)$ and $\beta=\frac{\epsilon(p)p^{k+1}}{\alpha}$.
It defines a distribution because $\mu^\pm_{f,\alpha}$ satisfies \emph{additivity}, namely, since 
\[
P\left(1,\frac{x-a}{p^n}\right)1_{U(a,n)}(x)=
\sum_{b\equiv a\;{\rm mod} \;p^{n}} (\gamma_{a,b}P)\left(1,\frac{x-b}{p^{n+1}}\right)1_{U(b,n+1)}(x),\quad\gamma_{a,b}:=\mbox{\tiny$\left(\begin{array}{cc}1&\frac{b-a}{p^n}\\0&p\end{array}\right)$},
\]
and by \eqref{eqint} we have that $U_p\varphi_{f_\alpha}^\pm=\alpha\cdot \varphi_{f_\alpha}^\pm$, where
\begin{equation}\label{defUp}
(U_p\varphi_{f_\alpha}^\pm)(D):=\sum_{c\in \Z/p\Z}\left(\begin{array}{cc}1&c\\&p\end{array}\right)^{-1}\varphi_{f_\alpha}^\pm\left(\left(\begin{array}{cc}1&c\\&p\end{array}\right)D\right),
\end{equation}
it can be shown that 
\[
\int_{U(a,n)}P\left(1,\frac{x-a}{p^n}\right)d\mu^\pm_{f,\alpha}(x)=
\sum_{b\equiv a\;{\rm mod} \;p^{n}}\int_{U(b,n+1)} (\gamma_{a,b}P)\left(1,\frac{x-b}{p^{n+1}}\right)d\mu^\pm_{f,\alpha}(x).
\]

The following result shows that, under certain hypothesis, we can extend $\mu^\pm_{f,\alpha}$ to a locally analytic measure.
\begin{theorem}[Visnik, Amice-V\'elu]\label{ThmVAV}
Fix an integer $h$ such that $1\leq h\leq k+1$. Suppose that 
 $\alpha$ satisfies ${\rm ord}_p\alpha<h$. Then there exists a locally analytic measure $\mu_{f,\alpha}^\pm$ satifying:
\begin{itemize}
\item $\int_{U(a,n)}P\left(1,\frac{x-a}{p^n}\right)d\mu^\pm_{f,\alpha}(x):=\frac{1}{\alpha^n}\varphi^\pm_{f_{\alpha}}\left(\frac{a}{p^n}-\infty\right)(P)$, for any locally polynomial function $P\left(1,\frac{x-a}{p^n}\right)1_{U(a,n)}(x)$ of degree strictly less than $h$.

\item For any $m\geq 0$,
\[
\int_{U(a,n)}(x-a)^md\mu^\pm_{f,\alpha}(x)\in \left(\frac{p^m}{\alpha}\right)^n\alpha^{-1}.
\]

\item If $F(x)=\sum_{m\geq 0}c_m(x-a)^m$ is convergent on $U(a,n)$, then
\[
\int_{U(a,n)}F(x)d\mu^\pm_{f,\alpha}(x)=\sum_{m\geq 0}c_m\int_{U(a,n)}(x-a)^md\mu^\pm_{f,\alpha}(x).
\]
\end{itemize}
\end{theorem}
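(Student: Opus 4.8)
The plan is to recognise Theorem \ref{ThmVAV} as an instance of the Amice--V\'elu--Vishik admissibility principle, so that almost all of the work is (a) extracting from \eqref{eqclassmu} the quantitative input that the principle requires and (b) running an extension-by-continuity argument in the appropriate topology on locally analytic functions. The input is the low-degree moment estimate. By Proposition \ref{ratMS} there is a constant $C_0$ with $\|\varphi^\pm_{f_\alpha}(D)(P)\|\le C_0$ for all divisors $D$ of the relevant shape and all $P$ in the monomial basis of $\cP(k)_{\Z}$ (the $p$-stabilisation $f_\alpha=f-\beta f(p\,\cdot\,)$ introduces at most a bounded power of $p$ in a denominator, since $\ord_p\alpha<h\le k+1$). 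Rescaling the monomial $X^{k-m}Y^m$ in \eqref{eqclassmu} then gives
\begin{equation*}
\Big\|\int_{U(a,n)}(x-a)^m\,d\mu^\pm_{f,\alpha}(x)\Big\|\;\le\;C_0\,\Big|\frac{p^m}{\alpha}\Big|^n,\qquad 0\le m\le k,\ n\ge 1,
\end{equation*}
which is the second bullet in this range of degrees; because $|p^m/\alpha|^n=p^{-n(m-\ord_p\alpha)}$ these moments genuinely decay once $m>\ord_p\alpha$, and the hypothesis $\ord_p\alpha<h\le k+1$ guarantees that the degrees $0,\dots,h-1$ are all controlled, enough of them decaying.

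Next I would build the extension. For a locally analytic function $F$ on $\Z_p^\times$, choose $n_0$ so large that $F$ is analytic on every ball $U(b,n_0)$; for $N\ge n_0$ let $F^{[N]}$ be the locally polynomial function of degree $<h$ obtained by truncating the Taylor expansion of $F$ at each level-$N$ point, and set $\int F\,d\mu^\pm_{f,\alpha}:=\lim_N\int F^{[N]}\,d\mu^\pm_{f,\alpha}$, the integrals on the right being defined by \eqref{eqclassmu} and additivity. That the limit exists is where admissibility is used: on each ball $U(c,N+1)$ the difference $F^{[N]}-F^{[N+1]}$ is a polynomial of degree $<h$ of sup-norm $O(p^{-(N-n_0)h})$ by the Taylor remainder; expanding it in powers of $(x-c)$, bounding its coefficients by its sup-norm on a ball of radius $p^{-(N+1)}$, and feeding the outcome into the moment estimate gives $\|\int(F^{[N]}-F^{[N+1]})\,d\mu^\pm_{f,\alpha}\|=O(p^{-N(h-\ord_p\alpha)})$, which tends to $0$ because $\ord_p\alpha<h$, and ultrametricity absorbs the sum over the many balls. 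Uniqueness and linearity of the extension are then formal; if $F$ is itself locally polynomial of degree $<h$ then $F^{[N]}=F$ for large $N$, so the extension restricts to \eqref{eqclassmu}, which is the first bullet. Granting the second bullet in all degrees, the third follows: for $F=\sum_m c_m(x-a)^m$ convergent on $U(a,n)$ one has $\|c_m\|p^{-nm}\to 0$, hence $\|c_m\int_{U(a,n)}(x-a)^m d\mu^\pm_{f,\alpha}\|\le\|c_m\|p^{-nm}\cdot C\,p^{n\,\ord_p\alpha}\to 0$, so the series $\sum_m c_m\int_{U(a,n)}(x-a)^m d\mu^\pm_{f,\alpha}$ converges and, by continuity of the extension on the Banach space of functions analytic on $U(a,n)$, equals $\int_{U(a,n)}F\,d\mu^\pm_{f,\alpha}$.

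The step I expect to be the real obstacle is the second bullet for $m>k$, where \eqref{eqclassmu} is silent. Here I would argue by strong induction on $m$: since the extension from the previous paragraph is a genuine distribution, additivity together with the binomial expansion of $(x-a)^m$ on each sub-ball yields
\begin{multline*}
\int_{U(a,n)}(x-a)^m d\mu^\pm_{f,\alpha}\;=\;\sum_{b}\int_{U(b,n+1)}(x-b)^m d\mu^\pm_{f,\alpha}\\
+\;\sum_{b}\sum_{l=0}^{m-1}\binom{m}{l}(b-a)^{m-l}\int_{U(b,n+1)}(x-b)^l d\mu^\pm_{f,\alpha}.
\end{multline*}
Iterating this identity in the first, degree-preserving sum while estimating the second by the inductive bound and $|b-a|\le p^{-n}$, one checks that the deepest degree-$m$ contribution tends to $0$ (because $(x-b)^m$ has tiny sup-norm on a deep ball and the extension is bounded there), so the telescoping series converges and, after collecting powers of $p$, gives $\|\int_{U(a,n)}(x-a)^m d\mu^\pm_{f,\alpha}\|\le C|p^m/\alpha|^n$ with $C$ independent of $m$. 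The delicate point, and the technical heart of \cite{A-V,Vis}, is to organise the estimate so that the implied constants do not grow with $m$; that uniformity is precisely what makes the term-by-term formula in the third bullet meaningful.
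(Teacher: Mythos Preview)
The paper does not give its own proof of Theorem~\ref{ThmVAV}: it is stated as a classical result and attributed to Vishik \cite{Vis} and Amice--V\'elu \cite{A-V}. The paper does, however, reprove the underlying admissibility principle later as Proposition~\ref{propext}, and your proposal is essentially that argument. Both define the extension on the generators $P_m^{a,N}(x)=\bigl(\tfrac{x-a}{p^N}\bigr)^m 1_{U(a,N)}$ by approximating with low-degree moments on finer partitions and checking that the resulting sequence is Cauchy via the bound $\int_{U(b,n)}P_j^{b,n}\,d\mu\in A\,p^{-n\cdot\ord_p\alpha}\cO_{\C_p}$ for $j<h$; the Cauchy estimate in both cases comes from additivity plus the binomial expansion of $(x-a)^m$ on sub-balls.

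The only organizational difference is harmless: you first extend $\mu^\pm_{f,\alpha}$ to all locally analytic $F$ via the truncations $F^{[N]}$ and afterwards recover the high-moment bound by induction on $m$, whereas the proof of Proposition~\ref{propext} defines the high moments $\mu(P_m^{a,N})$ directly as $\lim_n a_n$ with
\[
a_n=\sum_{b\equiv a\ (p^N)}\sum_{j\le h}\binom{m}{j}\Bigl(\tfrac{b-a}{p^N}\Bigr)^{m-j}p^{j(n-N)}\mu(P_j^{b,n}),
\]
reads off the bound from this formula, and then extends by continuity. Your worry about uniformity of the implied constant in $m$ is handled in the paper's version automatically, since the limit formula visibly sits in $A\cdot p^{-Nh}\cO_{\C_p}$ for every $m$.
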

If we assume that there exists such a root $\alpha$ with ${\rm ord}_p\alpha<k+1$, then we define $\mu_{f,\alpha}:=\mu_{f,\alpha}^++\mu_{f,\alpha}^-$ and the (\emph{cyclotomic}) \emph{$p$-adic $L$-function}:
\[
L_p(f,\alpha,s):=\int_{\Z_p^\times}{\rm exp}(s\cdot{\rm log(x)})d\mu_{f,\alpha}(x).
\]
\begin{remark}
Write $V_f$ the $\bar\Q[\GL_2(\Q)]$-representation generated by $f$. For any $g\in V_f$, write 
\begin{equation}\label{defMS}
\varphi_{g}^\pm(s-t)(P):=\frac{2\pi i}{\Omega_\pm}\left(\int_t^s g(z)P(1,-z)dz\pm\int_{-t}^{-s} g(z)P(1,z)dz\right).
\end{equation}
Relation \eqref{eqint} implies that the morphism
\begin{equation}\label{GL_2-equiv}
\varphi^\pm:V_f\longrightarrow \Hom\left(\Delta_0,V(k)_{\bar\Q}\right)[\det],\qquad g\mapsto\varphi_g^\pm,
\end{equation}
is $\GL_2(\Q)$-equivariant.
\end{remark}

\section{$p$-adic $L$-functions}\label{padicLfunct}

In this section we provide a reinterpretation of the distributions $\mu^\pm_{f,\alpha_p}$. 
Let $f\in S_{k+2}(\Gamma_1(N),\epsilon)$ be a cuspidal newform as above and let $p$ be any prime. Fix the embedding
\begin{equation}\label{actQ_p}
\Z_p^\times\hookrightarrow\Q_p^\times\hookrightarrow\GL_2(\Q_p);\qquad x\longmapsto\left(\begin{array}{cc}x&\\&1\end{array}\right).
\end{equation}

\begin{assumption}\label{mainassumption}
Assume that there exists a $\Z_p^\times$-equivariant morphisms 
\[
\delta:C(\Z_p^\times,L)\longrightarrow V,
\]
where $L$ is certain finite extension of the coefficient field $\Q(\{a_n\}_n)$,  and $V$ is certain model over $L$ of the local automorphic representation $\pi_p$ generated by $f$. Assume also that, for big enough $n$,
\begin{equation}\label{keyprop}
\left(\begin{array}{cc}1&s\\&p^n\end{array}\right)\delta(1_{U(s,n)})=\frac{1}{\gamma^n}\sum_{i=0}^mc_i(s,n)V_i,
\end{equation}
where $m$ is fixed, $V_i\in V$ do not depend neither $s$ nor $n$, and $c_i(s,n)\in \cO_L$.
\end{assumption}

\subsection{$p$-adic distributions}

Let us consider the subgroup
\[
\hat K_1(N)=\left\{g\in \GL_2(\hat \Z):\;g\equiv(\begin{smallmatrix}\ast&\ast\\0&1\end{smallmatrix})\;{\rm mod}\;N\right\}.
\]
Again by strong approximation we have that $\GL_2(\A_f)=\GL_2(\Q)^+\hat K_1(N)$.
Thus, for any $\GL_2(\A_f)\ni g=h_gk_g$, where $h_g\in \GL_2(\Q)^+$, $k_g\in \hat K_1(N)$ are well defined up to multiplication by $\Gamma_1(N)=\GL_2(\Q)^+\cap\hat K_1(N)$.
Write $K:=\hat K_1(N)\cap\GL_2(\Z_p)$. By strong multiplicity one $\pi_p^K$ is one dimensional. Therefore $V^K=Lw_0$ and $V=L[\GL_2(\Q_p)]w_0$.
Notice that we have a natural morphism 
\[
\varphi_{f,p}^\pm:V\longrightarrow\Hom(\Delta_0,V(k)_L);\qquad \varphi_{f,p}^\pm(gw_0)=\det(h_g)\cdot
\varphi_{f\mid h_g^{-1}}^\pm.
\] 
\begin{remark}\label{rmkacthom}
If $g\in\GL_2(\Q_p)$ then $h_g\in \hat K_1(N)^p:=\hat K_1(N)\cap\prod_{\ell\neq p}\GL_2(\Q_\ell)$. This implies that, for any $h\in \GL_2(\Q)^+\cap\hat K_1(N)^p$, we have $h_{hg}=h\cdot h_g$ for all $g\in\GL_2(\Q_p)$.
By \eqref{eqint}, this implies that $\varphi_{f,p}^\pm(hv)=h\ast\varphi_{f,p}^\pm(v)$, for all $v\in V\subset\pi_p$, where the action of $h\in \GL_2(\Q)^+\cap\hat K_1(N)^p$ is given by 
\[
(h\ast\varphi)(D):=h(\varphi(h^{-1}D)),\qquad\varphi\in \Hom(\Delta_0,V(k)_L).
\]
\end{remark}
\begin{remark}\label{rmkchar}
By definition, for any $\left(\begin{smallmatrix}a&b\\c&d\end{smallmatrix}\right)\in\Gamma_0(N)$, we have 
\[
f\left(\frac{az+b}{cz+d}\right)=\epsilon(d)\cdot(cz+d)^{k+2}f(z),\qquad f\mid\left(\begin{smallmatrix}a&b\\c&d\end{smallmatrix}\right)=\epsilon(d)\cdot  f.
\]
For any $z\in \Q_p^\times$ such that $z=p^nu$ where $u\in\Z_p^\times$, we can choose $d\in\Z$ such that $d\equiv u^{-1}\;{\rm mod}\;N\Z_p$ and $d\equiv p^{n}\;{\rm mod}\;N\Z_\ell$, for $\ell\neq p$. Let us choose $A=(\begin{smallmatrix}a&b\\c&d\end{smallmatrix})\in\Gamma_0(N)$, and we have  
\[
(z,1)=p^nA^{-1}(uA,p^{-n}A)\in\GL_2(\A_f),\qquad (uA,p^{-n}A)\in\hat K_1(N).
\]
This implies that, if $\varepsilon_p$ is the central character of $\pi_p$, 
\[
\varepsilon_p(z)\varphi_{f,p}^\pm(w_0)=\varphi_{f,p}^\pm(zw_0)=\det(p^nA^{-1})\cdot\varphi_{f\mid p^{-n}A}^\pm=p^{-nk}\epsilon(d)\cdot\varphi_{f}^\pm
\]
Hence $\varepsilon_p=\epsilon_p^{-1}|\cdot|^k$, where $\epsilon_p=\epsilon\mid_{\Z_p^\times}$.
\end{remark}

Again let $C_k(\Z_p^\times,\C_p)$ be the space of locally polynomial functions of $\Z_p^\times$ of degree less that $k$. Recall the $\Z_p^\times$-equivariant isomorphism
\begin{equation}\label{imathpol}
\imath:C(\Z_p^\times,\Z)\otimes_\Z \cP(k)_{\C_p}(-k)\longrightarrow C_k(\Z_p^\times,\C_p) ;\qquad h\otimes P\longmapsto P(1,x)\cdot h(x).
\end{equation}
Fixing $L\hookrightarrow\C_p$, we define the distributions $\mu^\pm_{f,\delta}$ attached to $f$ and $\delta$:
\begin{equation}\label{defmugen}
\int_{\Z_p^\times}\imath(h\otimes P)(x)d\mu^\pm_{f,\delta}(x):=\varphi_{f,p}^\pm(\delta(h))(0-\infty)(P).
\end{equation}

\subsection{Admissible Distributions}

We have just constructed a distribution 
\[
\mu_{f,\delta}^\pm:C_k(\Z_p^\times,\C_p)\longrightarrow \C_p.
\]
This section is devoted to extend this distribution to a locally analytic measure $\mu_{f,\delta}^\pm\in\Hom\left(C_{\rm loc-an}(\Z_p^\times,\C_p),\C_p\right)$.

\begin{definition}
Write $v_p:\C_p\rightarrow\Q\cup\{-\infty\}$ the usual normalized $p$-adic valuation. 
For any $h\in \R^+$,
a distribution $\mu\in \Hom(C_k(\Z_p^\times,\C_p),\C_p)$ is \emph{$h$-admissible} if 
\[
v_p\left(\int_{U(a,n)}g d\mu\right)\geq v_p(A)-n\cdot h,
\]
for some fixed $A\in \C_p$, and any $g\in C_k(\Z_p^\times,\cO_{\C_p})$ which is polynomical in a small enough $U(a,n)\subseteq\Z_p^\times$. We will denote previous relation by
\[
\int_{U(a,n)}g d\mu\in A\cdot p^{-n h}\cO_{\C_p}.
\]
\end{definition}

\begin{proposition}\label{propext}
If $h<k+1$, a $h$-admissible the distribution $\mu$ can be extended to a locally analytic measure such that
\[
\int_{U(a,n)}g d\mu\in A\cdot p^{-n h}\cO_{\C_p}, 
\]
for any $g\in C(\Z_p^\times,\cO_{\C_p})$ which is analytic in $U(a,n)$.
\end{proposition}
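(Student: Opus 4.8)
The plan is to adapt the classical argument of Amice--V\'elu and Vishik (Theorem \ref{ThmVAV}) to the abstract admissibility setting, the key point being that an $h$-admissible distribution on locally polynomial functions of degree $<k+1$ is determined by, and extends uniquely to, locally analytic functions, provided $h<k+1$. First I would fix a ball $U(a,n)$ and a locally analytic $g$ on it, expand $g=\sum_{m\geq 0}c_m(x-a)^m$ with $c_m\in\cO_{\C_p}$ and $v_p(c_m)\to\infty$ fast enough that the series converges on $U(a,n)$; concretely, analyticity on $U(a,n)$ means $v_p(c_m)+mn\to\infty$. The candidate value is
\[
\int_{U(a,n)}g\,d\mu:=\sum_{m\geq 0}c_m\int_{U(a,n)}(x-a)^m\,d\mu,
\]
so the first task is to make sense of $\int_{U(a,n)}(x-a)^m\,d\mu$ for all $m\geq 0$, not just $m<k+1$.

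The heart of the matter is the bound
\[
\int_{U(a,n)}(x-a)^m\,d\mu\in A\cdot p^{-nh}\cdot p^{(k+1-h)\lfloor\cdot\rfloor}\cO_{\C_p},
\]
or more precisely a bound of the shape $v_p\!\left(\int_{U(a,n)}(x-a)^m\,d\mu\right)\geq v_p(A)-nh-C(m)$ where $C(m)$ grows like $(h-1)\log_p m$ or similar. The standard trick: for $m\geq k+1$, subdivide $U(a,n)$ into balls of radius $p^{-(n+j)}$ for suitable $j$, so that on each sub-ball $(x-a)^m$ is well-approximated by a polynomial of degree $\leq k$ (its truncated Taylor polynomial), with error controlled by the smaller radius; then apply $h$-admissibility on each of the $p^{j-1}(p-1)$ sub-balls and sum. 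Choosing $j$ optimally as a function of $m$ (roughly $j\sim \frac{m}{k+1}$, or $j\sim\log_p m$) yields a bound on $\int_{U(a,n)}(x-a)^m\,d\mu$ that grows slowly enough in $m$ that, combined with $v_p(c_m)+mn\to\infty$ and $h<k+1$, the series $\sum c_m\int(x-a)^m\,d\mu$ converges in $\C_p$. This convergence is exactly what allows the extension to be defined, and the resulting estimate $v_p(\int_{U(a,n)}g\,d\mu)\geq v_p(A)-nh$ for analytic $g\in C(\Z_p^\times,\cO_{\C_p})$ drops out of the $m=0$ term dominating once the tail is controlled.

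After establishing the estimate I would check: (i) the value is independent of the chosen center $a$ and radius $n$, which follows from the additivity/distribution relation already used to define $\mu$ on $C_k$, applied termwise to the Taylor expansions and passing to the limit; (ii) additivity of the extended functional under refinement of balls, again by termwise limits; and (iii) that the extension agrees with $\mu$ on locally polynomial functions of degree $<k+1$, which is immediate since for such functions the Taylor series terminates. The main obstacle is purely the analytic estimate in the previous paragraph: getting the dependence of $v_p(\int_{U(a,n)}(x-a)^m\,d\mu)$ on $m$ sharp enough (i.e.\ sublinear, so that the exponential decay of $v_p(c_m)$ wins) is the delicate, calculation-heavy step, and it is precisely here that the hypothesis $h<k+1$ is used --- if $h=k+1$ the subdivision no longer gains anything and the series need not converge. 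Everything else is formal bookkeeping with limits of $\C_p$-valued functionals.
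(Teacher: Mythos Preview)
Your approach is essentially the paper's: subdivide $U(a,N)$ into balls of radius $p^{-n}$, replace $P_m^{a,N}(x)=\left(\frac{x-a}{p^N}\right)^m 1_{U(a,N)}$ on each sub-ball by its degree-$\le h$ Taylor polynomial, apply $h$-admissibility, and pass to the limit. Two points where your description drifts from what actually works: first, one does \emph{not} choose an optimal finite $j$ and ``bound the error'' --- the error term has degree $>k$, so $\mu$ is not yet defined on it; instead the paper \emph{defines} $\mu(P_m^{a,N}):=\lim_n a_n$ where $a_n$ is the sum of the truncated-Taylor contributions at level $n$, and the work is showing $\{a_n\}$ is Cauchy (this is where $h<k+1$ enters, via $p^{(n_2-N)j}\mu(P_j^{b',n_2})\in A\,p^{-n_2 h}\cO_{\C_p}$). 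Second, the bound one obtains is $\mu(P_m^{a,N})\in A\,p^{-Nh}\cO_{\C_p}$ \emph{uniformly in $m$} --- there is no sublinear growth in $m$ to worry about --- so the extension to analytic $g=\sum c_m\!\left(\frac{x-a}{p^N}\right)^m$ with $c_m\in\cO_{\C_p}$ is immediate and lands in $A\,p^{-Nh}\cO_{\C_p}$ as claimed.
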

\begin{proof}
Notice that any locally analytic function is topologically generated by functions of the form $P_m^{a,N}(x):=\left(\frac{x-a}{p^N}\right)^m1_{U(a,N)}(x)$, where $m\in\N$. By definition,  we have defined the values $\mu(P^{a,N}_m)$ when $m\leq k$. If $m>h$, we define $\mu(P_m^{a,N})=\lim_{n\rightarrow\infty}a_n$, where
\[
a_n=\sum_{b\;{\rm mod}\; p^{n};\;b\equiv a\;{\rm mod}\; p^N}\sum_{j\leq h}\left(\frac{b-a}{p^N}\right)^{m-j}\binom{m}{j}p^{j(n-N)}\mu(P_j^{b,n})
\]
and the definition agrees with $\mu$ when $h<m\leq k$ because $p^{j(n-N)}\mu(P_j^{b,n})\stackrel{n}{\rightarrow}0$ when $j>h$, hence
\[
\lim_{n\rightarrow\infty}a_n=\sum_{b\;{\rm mod}\; p^{n};\;b\equiv a\;{\rm mod}\; p^N}\sum_{j=0}^m\left(\frac{b-a}{p^N}\right)^{m-j}\binom{m}{j}p^{j(n-N)}\mu(P_j^{b,n})=\mu(P_m^{a,N})
\]
The limit converge because $\{a_n\}_n$ is Cauchy, indeed by additivity
\[
a_{n_2}-a_{n_1}=\sum_{j\leq h}\sum_{b\equiv a\;(p^{n_2})}\sum_{b'\equiv b\;(p^{n_1})}\sum_{k=h+1}^{m}r(k)\binom{k}{j}\left(\frac{b'-b}{p^N}\right)^{k-j}p^{(n_2-N)j}\mu(P_{j}^{b',n_2}),
\]
where $r(k)=\binom{m}{k}\left(\frac{b'-a}{p^N}\right)^{m-k}$.
Since 
\[
\left(\frac{b'-b}{p^N}\right)^{k-j}p^{(n_2-N)j}\mu(P_{j}^{b',n_2})\in A\cdot p^{-Nk}p^{(n_1-n_2)(k-j)}p^{(k-h)n_2}\cO_{\C_p},
\]
we have that $a_{n+1}-a_{n}\stackrel{n}{\rightarrow} 0$. 

It is clear by the definition that $\mu(P_m^{a,N})\in A\cdot p^{-N h}\cO_{\C_p}$ for all $m, a$ and $N$. Moreover, it extends to a locally analytic measure by continuity which is determined by the image of locally polynomial functions of degree at most $h$.
\end{proof}

Notice that, for all $m\leq k$,
\[
P_m^{a,n}(x)=\left(\frac{x-a}{p^n}\right)^m1_{U(a,n)}(x)=\imath\left(1_{U(a,n)}\otimes\left(\frac{Y-aX}{p^n}\right)^mX^{k-m}\right)
\]
Using property \eqref{keyprop} and Remarks \ref{rmkacthom} and Remark \ref{rmkchar}, we compute that 
\begin{eqnarray*}
\int_{\Z_p^\times}P_m^{a,n} d\mu^\pm_{f,p}&=&\varphi_{f,p}^\pm(\delta(1_{U(a,n)}))(0-\infty)\left(\left(\frac{Y-aX}{p^n}\right)^mX^{k-m}\right)\\
&=&\sum_{i=0}^m\frac{c_i(a,n)}{\gamma^n}\cdot\varphi_{f,p}^\pm\left(p^{-n}\left(\begin{smallmatrix}p^n&-a\\&1\end{smallmatrix}\right)V_i\right)(0-\infty)\left(\left(\frac{Y-aX}{p^n}\right)^mX^{k-m}\right)\\
&=&\sum_{i=0}^m\frac{c_i(a,n)}{\varepsilon_p(p)^{n}\gamma^n}\cdot\varphi^\pm_{f,p}(V_i)\left(\frac{a}{p^n}-\infty\right)\left((p^{-n}Y)^m(p^{-n}X)^{k-m})\right)\\
&=&\sum_{i=0}^m\frac{c_i(a,n)}{\gamma^n}\cdot\varphi^\pm_{f,p}(V_i)\left(\frac{a}{p^n}-\infty\right)\left(Y^mX^{k-m}\right).
\end{eqnarray*}
Notice that $\varphi^\pm_{f,p}(V_i)\in \Hom(\Delta_0,V(k)_L)^{\Gamma_1(Np^r)}_\epsilon:=\Hom_{\Gamma_1(Np^r)}(\Delta_0,V(k)_L)_\epsilon$ for some big enough $r\in\N$, where the subindex $\epsilon$ indicates that the action of $\Gamma_1(Np^r)/\Gamma_0(Np^r)$ is given by the character $\epsilon$. By Manin's trick we have that 
\[
\Hom_{\Gamma_1(Np^r)}(\Delta_0,V(k)_L)_\epsilon\simeq\Hom_{\Gamma_1(Np^r)}(\Delta_0,V(k)_{\cO_L})_\epsilon\otimes_{\cO_L} L.
\]
Since $Y^mX^{k-m}\in \cP(k)_{\cO_L}$, $c(a,n)\in\cO_L$ and the functions $P_m^{a,n}$ generate $C_k(\Z_p^\times,\cO_{\C_p})$, we obtain that 
\begin{equation}\label{calcint}
\int_{U(a,n)}gd\mu^\pm_{f,\delta}\in\frac{A}{\gamma^n}\cO_{\C_p},\qquad\mbox{for all }g\in C_k(\Z_p^\times,\cO_{\C_p}),
\end{equation}
and some fixed $A\in L$.
We deduce the following result.
\begin{theorem}\label{thmadm}
Fix an embedding $L\hookrightarrow\C_p$. We have that $\mu^\pm_{f,\delta}$ is $v_p(\gamma)$-admissible. 
\end{theorem}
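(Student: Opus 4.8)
The plan is to unwind the definition of $v_p(\gamma)$-admissibility and observe that the computation carried out in \eqref{calcint} does exactly that. Recall that a distribution $\mu$ on $C_k(\Z_p^\times,\C_p)$ is $h$-admissible, with $h=v_p(\gamma)$ in our case, if there is a fixed constant $A\in\C_p$ such that $\int_{U(a,n)}g\,d\mu\in A\cdot p^{-nh}\cO_{\C_p}$ for every $g\in C_k(\Z_p^\times,\cO_{\C_p})$ that is polynomial on a small enough $U(a,n)$. Since $v_p(\gamma^n)=n\,v_p(\gamma)$, the relation $\int_{U(a,n)}g\,d\mu^\pm_{f,\delta}\in\frac{A}{\gamma^n}\cO_{\C_p}$ is literally the statement that $v_p\big(\int_{U(a,n)}g\,d\mu^\pm_{f,\delta}\big)\ge v_p(A)-n\,v_p(\gamma)$, i.e. $v_p(\gamma)$-admissibility with the constant $A\in L$ found above. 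So the theorem is essentially a restatement of \eqref{calcint}, and the proof should simply say so.

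Concretely, I would proceed in three short steps. First, reduce to generators: by \eqref{imathpol} the functions $P_m^{a,n}(x)=\left(\frac{x-a}{p^n}\right)^m 1_{U(a,n)}(x)$ for $0\le m\le k$ topologically generate $C_k(\Z_p^\times,\cO_{\C_p})$, so it suffices to bound $\int_{U(a,n)}P_m^{a,n}\,d\mu^\pm_{f,\delta}$, and by continuity/linearity of the valuation the bound for a general $g$ follows from the bound on the $P_m^{a,n}$ (being careful that the integral over $U(a,n)$ of a function polynomial on $U(a,n)$ is a $\cO_{\C_p}$-combination of the $\int_{U(a,n)} P_j^{a,n}$). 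Second, invoke the explicit chain of equalities already displayed before \eqref{calcint}: using \eqref{keyprop}, Remark \ref{rmkacthom} and Remark \ref{rmkchar}, one rewrites $\int_{U(a,n)}P_m^{a,n}\,d\mu^\pm_{f,\delta}=\sum_{i=0}^m \frac{c_i(a,n)}{\gamma^n}\,\varphi^\pm_{f,p}(V_i)\big(\frac{a}{p^n}-\infty\big)(Y^mX^{k-m})$. Third, control the integrality of each term: the $c_i(a,n)$ lie in $\cO_L$ by Assumption \ref{mainassumption}; the monomial $Y^mX^{k-m}\in\cP(k)_{\cO_L}$; and the finitely many modular symbols $\varphi^\pm_{f,p}(V_i)$ lie in $\Hom_{\Gamma_1(Np^r)}(\Delta_0,V(k)_L)_\epsilon$ for a large enough fixed $r$, which by Manin's trick equals $\Hom_{\Gamma_1(Np^r)}(\Delta_0,V(k)_{\cO_L})_\epsilon\otimes_{\cO_L}L$, so after clearing a single denominator $A\in L$ (independent of $a,n,m$) each $\varphi^\pm_{f,p}(V_i)\big(\frac{a}{p^n}-\infty\big)(Y^mX^{k-m})$ lies in $A^{-1}\cO_L$. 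Combining, $\int_{U(a,n)}P_m^{a,n}\,d\mu^\pm_{f,\delta}\in \frac{A}{\gamma^n}\cO_{\C_p}$, which is \eqref{calcint}, hence $v_p(\gamma)$-admissibility.

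I expect the only real subtlety, and hence the main obstacle if any, to be the bookkeeping in the reduction step: one must check that the constant $A$ really is uniform in $a$, $n$ and $m$. This is where Manin's trick matters — it guarantees that the integral structure on $\Hom_{\Gamma_1(Np^r)}(\Delta_0,V(k)_L)_\epsilon$ is finitely generated, so that the denominators of the finitely many elements $\varphi^\pm_{f,p}(V_i)$ (which do not depend on $a$ or $n$ by \eqref{keyprop}) and of the evaluations against the $\cO_L$-lattice $\cP(k)_{\cO_L}$ are bounded by a single $A$. One also has to note that the divisor $\frac{a}{p^n}-\infty$ lands in $\Delta_0$ and that passing between $(0-\infty)$ and $\big(\frac{a}{p^n}-\infty\big)$ via the matrix $\left(\begin{smallmatrix}p^n&-a\\&1\end{smallmatrix}\right)$, together with the rescaling of $X,Y$ by $p^{-n}$, produces exactly the cancellation of the $p^{-nk}$ against $\varepsilon_p(p)^n$ recorded in Remark \ref{rmkchar}, so no stray powers of $p$ creep into the bound. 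Everything else is a direct quotation of the displayed computation preceding the statement, so the proof is genuinely just one or two lines once these uniformity points are acknowledged.
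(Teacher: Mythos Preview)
Your proposal is correct and follows essentially the same approach as the paper: the theorem is deduced directly from \eqref{calcint}, which in turn is obtained from the displayed computation using \eqref{keyprop}, Remarks \ref{rmkacthom} and \ref{rmkchar}, together with Manin's trick to ensure a uniform denominator $A$. The paper's proof is literally the sentence ``We deduce the following result'' after \eqref{calcint}, and your write-up simply makes the implicit reductions explicit.
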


\begin{definition}
If we assume that $v_p(\gamma)<k+1$, we define the cyclotomic $p$-adic measure attached to $f$ and $\delta$
\[
\mu_{f,\delta}:=\mu_{f,\delta}^++\mu_{f,\delta}^-.
\]
\end{definition}

\subsection{Interpolation properties}


Given the modular form $f\in S_{k+2}(\Gamma_1(N))$, let us consider the automorphic form $\phi:\GL_2(\Q)\backslash\GL_2(\A)\rightarrow\C$, characterized by its restriction to $\GL_2(\R)^+\times\GL_2(\A_f)$:
\[
\phi(g_\infty,g_f)=\frac{\det\left(\gamma\right)}{\det(g_\infty)}\cdot f\mid\gamma^{-1}g_\infty \left( i\right), \quad g_f=\gamma k\in \GL_2(\Q)^+\hat K_1(N), \quad g_\infty=\left(\begin{smallmatrix}a&b\\c&d\end{smallmatrix}\right).
\]
Given $g\in\GL_2(\Q_p)$, we compute $\varphi^\pm_{f,p}(gw_0)(0-\infty)(Y^mX^{k-m})=$
\begin{eqnarray*}
&=&\det(h_g)\cdot\varphi^\pm_{f\mid h_g^{-1}}(0-\infty)(Y^mX^{k-m})\\
&=&\frac{-2\pi\det(h_g)}{\Omega_f^\pm}\cdot\left(\int_\infty^0 f\mid h_g^{-1}(ix)(-ix)^mdx\pm\int_\infty^0 f\mid h_g^{-1}(ix)(ix)^mdx\right)\\
&=&\frac{2\pi }{\Omega_f^\pm}\cdot\int_{\R^+}x^{m-k}\cdot\phi\left(\left(\begin{smallmatrix}x&\\&1\end{smallmatrix}\right),g\right)d^{\times}x\cdot((-i)^m\pm i^m).
\end{eqnarray*}
This implies that, if we consider the automorphic representation $\pi$ generated by $\phi$, and the $\GL_2(\Q_p)$-equivariant morphism
\[
\phi_f:\pi_p\longrightarrow\pi:\qquad gw_0\longmapsto g\phi,
\]
we have that 
\[
\varphi^\pm_{f,p}(\delta(h))(0-\infty)(Y^mX^{k-m})=\frac{4\pi(-i)^m }{\Omega_f^\pm}\cdot\int_{\R^+}x^{m-k}\cdot\phi_f\left(\delta(h)\right)\left(\left(\begin{smallmatrix}x&\\&1\end{smallmatrix}\right),1\right)d^{\times}x\cdot\left(\frac{1\pm(-1)^m}{2}\right).
\]
Let $H$ be the maximum subgroup of $\Z_p^\times$ such that $h\mid_{sH}$ is constant, for all $sH\in\Z_p^\times/H$. Notice that $h=\sum_{s\in\Z_p^\times/H}h(s)1_{sH}$. Moreover, for all $v\in\pi_p$, the automorphic form $\phi_f(v)$ is $U^p:=\prod_{\ell\neq p}\Z_\ell^\times$-invariant when embedded in $\GL_2(\A_f)$  by means of \eqref{actQ_p}. Hence, if we consider $\varphi_{f,p}:=\varphi_{f,p}^++\varphi_{f,p}^-$, we have $\varphi_{f,p}(\delta(h))(0-\infty)(Y^mX^{k-m})=$
\begin{eqnarray*}
&=&\sum_{sH\in\Z_p^\times/H}\frac{4\pi h(s)}{i^m\Omega_f^\pm}\cdot\int_{\R^+}\int_{U^p}x^{m-k}\phi_f\left(\delta(1_{sH})\right)\left(\left(\begin{smallmatrix}x&\\&1\end{smallmatrix}\right),1,\left(\begin{smallmatrix}t&\\&1\end{smallmatrix}\right)\right)d^{\times}xd^\times t\\
&=&\sum_{sH\in\Z_p^\times/H}\frac{4\pi h(s)}{i^m\Omega_f^\pm}\cdot\int_{\R^+}\int_{U^p}x^{m-k}\phi_f\left(\delta(1_{H})\right)\left(\left(\begin{smallmatrix}x&\\&1\end{smallmatrix}\right),\left(\begin{smallmatrix}s&\\&1\end{smallmatrix}\right),\left(\begin{smallmatrix}t&\\&1\end{smallmatrix}\right)\right)d^{\times}xd^\times t\\
&=&\frac{4\pi}{\Omega_f^\pm{\rm vol}(H)}\cdot\int_{\A^\times/\Q^\times}\tilde h(y)\cdot\phi_f\left(\delta(1_{H})\right)\left(\begin{smallmatrix}y&\\&1\end{smallmatrix}\right)d^{\times}y,
\end{eqnarray*}
where $\tilde h(y)=(-i)^m\cdot h(y_p|y|y_\infty^{-1})\cdot|y|^{m-k}$, for all $y=(y_v)_v\in\A^\times$, and $\Omega_f^\pm$ is $\Omega_f^+$ or $\Omega_f^-$ depending if $m$ is even or odd.

Let $\chi\in C_k(\Z_p^\times,\C_p)$ be a locally polynomial character. This implies that $\chi(x)=\chi_0(x)x^m$, for some natural $m\leq k$ and some locally constant character $\chi_0$. This implies that $\chi=\imath(\chi_0\otimes Y^{m}X^{k-m})$. We deduce that
\[
\int_{\Z_p^\times}\chi(x)d\mu_{f,\delta}(x):=\frac{4\pi }{\Omega_f^\pm i^m{\rm vol}(H)}\cdot\int_{\A^\times/\Q^\times}\tilde\chi_0(y)|y|^{m-k}\phi_f\left(\delta(1_H)\right)\left(\begin{smallmatrix}y&\\&1\end{smallmatrix}\right)d^{\times}y,
\]
where $\tilde\chi_0(y):=\chi_0(y_p|y|y_\infty^{-1})$.

Let $\psi:\A/\Q\rightarrow\C^\times$ be a global additive character and we define the Whittaker model element
\[
W_\delta^H:\GL_2(\A)\longrightarrow\C;\qquad W_\delta^H(g):=\int_{\A/\Q}\phi_f(\delta(1_H))\left(\left(\begin{array}{cc}1&x\\&1\end{array}\right)g\right)\psi(-x)dx.
\]
This element admits a expression $W_\delta^H(g)=\prod_vW_{\delta,v}^H(g_v)$, if $g=(g_v)\in\GL_2(\A)$. Moreover by \cite[Theorem 3.5.5]{Bump}, it provides the \emph{Fourier expansion} 
\[
\phi_f(\delta(1_H))(g)=\sum_{a\in\Q^\times}W_\delta^H\left(\left(\begin{array}{cc}a&\\&1\end{array}\right)g\right).
\]
We compute
\begin{eqnarray*}
\int_{\A^\times/\Q^\times}\tilde\chi_0(y)|y|^{m-k}\phi_f\left(\delta(1_H)\right)\left(\begin{smallmatrix}y&\\&1\end{smallmatrix}\right)d^{\times}y&=&\int_{\A^\times}\tilde\chi_0(y)|y|^{m-k}W_\delta^H\left(\begin{smallmatrix}y&\\&1\end{smallmatrix}\right)d^{\times}y\\
&=&\prod_v \int_{\Q_v^\times}\tilde\chi_0(y_v)|y_v|^{m-k}W_{\delta,v}^H\left(\begin{smallmatrix}y_v&\\&1\end{smallmatrix}\right)d^{\times}y_v.
\end{eqnarray*}
By definition of $\delta$, when $v\neq p$ the element $W_{\delta,v}^H$ correspond to the new-vector, thus by \cite[Proposition 3.5.3]{Bump} 
\[
\int_{\Q_v^\times}\tilde\chi_0(y_v)|y_v|^{m-k}W_{\delta,v}^H\left(\begin{smallmatrix}y_v&\\&1\end{smallmatrix}\right)d^{\times}y_v=L_v\left(m-k+\frac{1}{2},\pi_v,\tilde\chi_0\right),\qquad v\neq p.
\]
We conclude using the results explained in \cite[\S 3.5]{Bump} 
\[
\int_{\Z_p^\times}\chi(x)d\mu_{f,\delta}(x)=\frac{4\pi }{\Omega_f^\pm i^m}\cdot e_\delta(\pi_p,\chi_0)\cdot L\left(m-k+\frac{1}{2},\pi,\tilde\chi_0\right),
\]
where the \emph{Euler factor}
\[
e_\delta(\pi_p,\chi_0)=\frac{L_p\left(m-k+\frac{1}{2},\pi_p,\tilde\chi_0\right)^{-1}}{{\rm vol}(H)}\int_{\Q_p^\times}\tilde\chi_0(y_p)|y_p|^{m-k}W_{\delta,p}^H\left(\begin{smallmatrix}y_p&\\&1\end{smallmatrix}\right)d^{\times}y_p.
\]

\subsection{The morphisms $\delta$}

In this section we will construct morphisms $\delta$ satisfying Assumption \ref{mainassumption}. The only case that will be left is the case when $\pi_p$ is \emph{supercuspidal}. In this situation we will not be able to construct admissible $p$-adic distributions.

Let $\pi_p$ be the local representation. Let $W:\pi_p\rightarrow\C$ be the Whittaker functional, and let us consider the Kirillov model $\mathcal{K}$ given by the embedding
\[
\lambda:\pi_p\hookrightarrow \mathcal{K};\qquad \lambda(v)(y)=W\left(\left(\begin{array}{cc}y&\\&1\end{array}\right)v\right).
\]
Recall that the Kirillov model lies in the space of locally constant functions $\phi:\Q_p^\times\rightarrow\C$ endowed with the action
\begin{equation}\label{eqKir}
\left(\begin{array}{cc}1&x\\&1\end{array}\right)\phi(y)=\psi(xy)\phi(y),\qquad \left(\begin{array}{cc}a&\\&1\end{array}\right)\phi(y)=\phi(ay).
\end{equation}

We construct the $\Z_p^\times$-equivariant morphism
\begin{equation}\label{defdelta}
\delta:C(\Z_p^\times,\C)\longrightarrow \mathcal{K};\qquad \delta(h)(y)=\int_{\Z_p^\times}\Psi(zy)h(z)\psi(-zy)d^\times z,
\end{equation}
for a well chosen locally constant function $\Psi$. Notice that, if $h=1_{H}$ for $H$ small enough 
\[
\delta(h)(y)=\Psi(y)\int_{H}\psi(-zy)d^\times z={\rm vol}(H)\Psi(y),\qquad \mbox{if $|y|<<0$.}
\]
This implies that, in order to choose $\Psi$, we need to control how $\mathcal{K}$ looks like:
\begin{itemize}
\item By \cite[Theorem 4.7.2]{Bump}, if $\pi_p=\pi(\chi_1,\chi_2)$ principal series then $\mathcal{K}$ consists on functions $\phi$ such that $\phi(y)=0$ for $|y|>>0$, and
\[
\phi(y)=\left\{\begin{array}{ll}C_1|y|^{1/2}\chi_1(y)+C_2|y|^{1/2}\chi_2(y),&\chi_1\neq\chi_2,\\C_1|y|^{1/2}\chi_1(y)+C_2v_p(y)|y|^{1/2}\chi_1(y),&\chi_1=\chi_2,\end{array}\right.\qquad |y|<<0,
\]  
for some constants $C_1$ and $C_2$.

\item By \cite[Theorem 4.7.3]{Bump}, if $\pi_p=\sigma(\chi_1,\chi_2)$ a special representation such that $\chi_1\chi_2^{-1}=|\cdot|^{-1}$ then $\mathcal{K}$ consists on functions $\phi$ such that $\phi(y)=0$ for $|y|>>0$, and 
\[
\phi(y)=C|y|^{1/2}\chi_2(y),\qquad |y|<<0,
\]  
for some constant $C$.

\item By \cite[Theorem 4.7.1]{Bump} If $\pi_p$ is supercuspidal then $\mathcal{K}=C_c(\Q_p^\times,\C)$.

\end{itemize}

By Lemma \ref{intpsi} and Lemma \ref{psichi} we have that $\delta(h)(y)=0$ for $y$ with big absolute value. This implies that 
\begin{itemize}
\item In case $\pi_p=\pi(\chi_1,\chi_2)$ with $\chi_1\neq\chi_2$, we can choose 
\[
\Psi=|\cdot|^{1/2}\chi_1\qquad\mbox{or}\qquad\Psi=|\cdot|^{1/2}\chi_2.
\]

\item In case $\pi_p=\pi(\chi_1,\chi_2)$ with $\chi_1=\chi_2$, we can choose 
\[
\Psi=|\cdot|^{1/2}\chi_1\qquad\mbox{or}\qquad\Psi=v\cdot|\cdot|^{1/2}\chi_1.
\]

\item In case $\pi_p=\sigma(\chi_1,\chi_2)$ we have 
\[
\Psi=|\cdot|^{1/2}\chi_2.
\]

\item In case $\pi_p$ supercuspidal it is not possible to choose any $\Psi$.
\end{itemize}

We have to prove whether $\delta$ satisfies the property \eqref{keyprop}:
If $\Psi$ is invariant under the action of $1+p^n\Z_p$,
\begin{eqnarray*}
\left(\begin{smallmatrix}1&a\\&p^n\end{smallmatrix}\right)\delta(1_{U(a,n)})(y)=&=&\left(\begin{smallmatrix}p^{n}&\\&p^n\end{smallmatrix}\right)\left(\begin{smallmatrix}p^{-n}&\\&1\end{smallmatrix}\right)\left(\begin{smallmatrix}1&a\\&1\end{smallmatrix}\right)\delta(1_{U(a,n)})(y)\\
&=&\varepsilon_p(p^{n})\cdot \psi(ap^{-n}y)\cdot\delta(1_{U(a,n)})(p^{-n}y)\\
&=&\varepsilon_p(p)^{n}\cdot \int_{U(a,n)}\Psi(p^{-n}yz)\psi(p^{-n}y(a-z))d^\times z\\
&=&\frac{\varepsilon_p(p)^{n}\cdot\Psi(p^{-n}ya)\cdot |p|^n}{1-p^{-1}}\cdot\int_{\Z_p}\psi(yz)d z\\
&=&\frac{\varepsilon_p(p)^{n}\cdot |p|^n}{1-p^{-1}}\cdot\Psi(p^{-n}ya)\cdot 1_{\Z_p}(y),
\end{eqnarray*}
since $d^\times x=(1-p^{-1})^{-1}|x|^{-1}dx$.
\begin{itemize}
\item If $\Psi$ is a character we deduce the property \eqref{keyprop} with $m=0$, $\gamma=\Psi(p)p\varepsilon_p(p)^{-1}$, $c_0(a,n)=\Psi(a)$ and $V_0=(1-p^{-1})^{-1}\Psi(y) 1_{\Z_p}(y)$. 

\item If $\Psi=v_p\cdot\chi$, with $\chi$ a character, it also satisfies property \eqref{keyprop} with $m=1$, $\gamma=\chi(p)p\varepsilon_p(p)^{-1}$, $c_0(a,n)=-n\chi(a)$, $c_1(a,n)=\chi(a)$, $V_0=(1-p^{-1})^{-1}\chi(y) 1_{\Z_p}(y)$ and $V_1=(1-p^{-1})^{-1}v_p(y)\chi(y) 1_{\Z_p}(y)$.
\end{itemize}

\subsection{Computation Euler factors}

The following result describes the Euler factors in each of the situations:
\begin{proposition}\label{EulerFactors}
We have the following cases:
\begin{itemize}
\item[$(i)$] If $\Psi=|\cdot|^{1/2}\chi_i$ we have that
\[
e_\delta(\pi_p,\chi_0)=\left\{\begin{array}{lc}
\frac{(1-p^{-1})^{-1}p^{r(m-k-\frac{1}{2})}\chi_i(p)^{-r}\tau(\chi_0\chi_i,\psi)}{L(m-k+1/2,\tilde\chi_0\chi_j)L(k-m+1/2,\tilde\chi_0\chi_i^{-1})},&\pi_p=\pi(\chi_i,\chi_j);\\
\frac{(1-p^{-1})^{-1}p^{r(m-k-\frac{1}{2})}\chi_i(p)^{-r}\tau(\chi_0\chi_i,\psi)}{L(k-m+1/2,\tilde\chi_0\chi_i^{-1})},&\pi_p=\sigma(\chi_i,\chi_j),
\end{array}
\right.
\]
where $r$ is the conductor of $\chi_i\chi_0$.

\item[$(ii)$] If $\Psi=v_p\cdot|\cdot|^{1/2}\chi_i$ we have that
\[
e_\delta(\pi_p,\chi_0)=\left\{\begin{array}{ll}
\frac{p^{k-m-\frac{1}{2}}\chi_i(p)+p^{m-k-\frac{1}{2}}\chi_i(p)^{-1}-2p^{-1}}{1-p^{-1}};&\chi_0\chi_i\mid_{\Z_p^\times}=1;\\
\frac{-rp^{r(m-k-\frac{1}{2})}\chi_i(p)^{-r}\tau(\chi_0\chi_i,\psi)}{1-p^{-1}};&{\rm cond}(\chi_0\chi_i)=r>0.
\end{array}\right.
\]
\end{itemize}
\end{proposition}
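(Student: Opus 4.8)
The proof is a local computation at $p$: all other places have been packaged into $L(m-k+\tfrac12,\pi,\tilde\chi_0)$, so what remains is to evaluate the local integral in the definition of $e_\delta(\pi_p,\chi_0)$.

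\emph{Step 1 (reduction to a $\GL_1$-integral).} Recall that $W^H_{\delta,p}\!\left(\begin{smallmatrix}y&\\&1\end{smallmatrix}\right)=\delta(1_H)(y)$ is the Kirillov function of $\delta(1_H)$ and that $H=\ker\bigl(\chi_0\colon\Z_p^\times\to\C^\times\bigr)$, so that $\chi_0|_H=1$. In both cases of the statement $\Psi$ equals $|\cdot|^{1/2}\chi_i$ or $v_p\cdot|\cdot|^{1/2}\chi_i$, and in either case $\Psi(zy)=\chi_i(z)\Psi(y)$ for $z\in\Z_p^\times$ (for the second choice because $v_p(zy)=v_p(y)$); hence, by the definition of $\delta$,
\[
\delta(1_H)(y)=\Psi(y)\int_H\chi_i(z)\,\psi(-zy)\,d^\times z .
\]
Inserting this into $Z_p:=\int_{\Q_p^\times}\tilde\chi_0(y)|y|^{m-k}\,\delta(1_H)(y)\,d^\times y$, interchanging the two integrations and substituting $y\mapsto z^{-1}y$ (with $z\in\Z_p^\times$), the factor $\chi_i(z)$ is absorbed by $\Psi(z^{-1}y)/\Psi(y)=\chi_i(z)^{-1}$ and one is left with $\int_H\chi_0(z)^{-1}\,d^\times z=\vol(H)$; therefore $Z_p=\vol(H)\cdot I$, where
\[
I:=\int_{\Q_p^\times}\Psi(y)\,\tilde\chi_0(y)\,|y|^{m-k}\,\psi(-y)\,d^\times y .
\]
Consequently $\vol(H)$ cancels against the denominator in the formula for $e_\delta$, and the task becomes to show $e_\delta(\pi_p,\chi_0)=I/L_p\!\left(m-k+\tfrac12,\pi_p,\tilde\chi_0\right)$ equals the stated expression.

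\emph{Step 2 (evaluation of $I$).} Decomposing $\Q_p^\times=\coprod_{n\in\Z}p^n\Z_p^\times$ and writing $y=p^nu$,
\[
I=\sum_{n\in\Z}n^{\varepsilon}\,p^{-n(m-k+1/2)}\,\chi_i(p)^{n}\int_{\Z_p^\times}(\chi_0\chi_i)(u)\,\psi(-p^nu)\,d^\times u ,
\]
with $\varepsilon=0$ when $\Psi=|\cdot|^{1/2}\chi_i$ and $\varepsilon=1$ when $\Psi=v_p\cdot|\cdot|^{1/2}\chi_i$. Let $r$ be the conductor of $\chi_0\chi_i$. If $r>0$, Lemma~\ref{psichi} leaves only the term $n=-r$, whose inner integral equals $(1-p^{-1})^{-1}p^{-r}\tau(\chi_0\chi_i,\psi)$ by Definition~\ref{defGS}; this gives at once $I=(-r)^{\varepsilon}(1-p^{-1})^{-1}p^{\,r(m-k-1/2)}\chi_i(p)^{-r}\tau(\chi_0\chi_i,\psi)$, and, since a twist by a ramified character through $\chi_i$ makes the corresponding local $L$-factor equal $1$, dividing by $L_p(m-k+\tfrac12,\pi_p,\tilde\chi_0)$ produces the second lines of $(i)$ and $(ii)$. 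If $r=0$, Lemma~\ref{intpsi} gives inner integral $1$ for $n\ge 0$, $-(p-1)^{-1}$ for $n=-1$ and $0$ for $n\le -2$, so $I$ is a geometric tail $\sum_{n\ge0}n^{\varepsilon}a^n$ (with $a:=p^{-(m-k+1/2)}\chi_i(p)$) plus the boundary contribution of $n=-1$.

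\emph{Step 3 (assembling the unramified case).} Here the integral is not absolutely convergent for general $m$, and one uses the meromorphic continuation of the $\GL_1$-zeta integral: $\sum_{n\ge0}a^n=(1-a)^{-1}=L(m-k+\tfrac12,\tilde\chi_0\chi_i)$ and, for $\varepsilon=1$, its derivative $\sum_{n\ge1}na^n=a(1-a)^{-2}$. Dividing $I$ by $L_p(m-k+\tfrac12,\pi_p,\tilde\chi_0)$, the tail cancels the factor carried by $\chi_i$; using the identity $\chi_i(p)^{-1}p^{-(k-m+1/2)}=p^{-1}a^{-1}$, so that $L(k-m+\tfrac12,\tilde\chi_0\chi_i^{-1})=(1-p^{-1}a^{-1})^{-1}$, and that $\tau(\chi_0\chi_i,\psi)=1-p^{-1}$ when $\chi_0\chi_i$ is unramified, the combination of tail and boundary term collapses, for $\varepsilon=0$, to $(1-p^{-1})^{-1}\tau(\chi_0\chi_i,\psi)\,L(k-m+\tfrac12,\tilde\chi_0\chi_i^{-1})^{-1}$, and for $\varepsilon=1$ to $\bigl(p^{k-m-1/2}\chi_i(p)+p^{m-k-1/2}\chi_i(p)^{-1}-2p^{-1}\bigr)/(1-p^{-1})$. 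Restoring, in the principal-series case, the surviving factor $L(m-k+\tfrac12,\tilde\chi_0\chi_j)^{-1}$ yields the first lines of $(i)$ and $(ii)$, whereas for $\sigma(\chi_i,\chi_j)$ the representation $L$-factor carries only the $\chi_i$-part and nothing extra survives the cancellation.

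\emph{The main obstacle.} Strategically this is just the classical local computation of a new-vector zeta integral, so the real work is careful bookkeeping: the normalisations ($d^\times x=(1-p^{-1})^{-1}|x|^{-1}dx$, the value of the conductor-$0$ Gauss sum, and which of the two characters of $\pi_p$ is used to build $\Psi$), keeping exact track of the powers of $p$ and of $\chi_i(p)$, and justifying the passage through the analytic continuation (legitimate because $I/L_p(m-k+\tfrac12,\pi_p,\tilde\chi_0)$ is, by the local theory, a Laurent polynomial in $p^{\pm(m-k)}$ and $\chi_i(p)^{\pm 1}$). The point where something genuinely new happens is $\varepsilon=1$: the factor $v_p(y)$ replaces the geometric series by $\sum_n na^n$, and one must verify that after dividing by $L(m-k+\tfrac12,\tilde\chi_0\chi_i)^2$ this collapses exactly to the displayed symmetric expression --- an Euler factor of a shape with no analogue in the Visnik--Amice-V\'elu setting, and the computational core of the extremal construction.
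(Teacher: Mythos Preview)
Your proof is correct and follows essentially the same approach as the paper: reduce $e_\delta$ to the single integral $I=\int_{\Q_p^\times}\Psi(y)\tilde\chi_0(y)|y|^{m-k}\psi(-y)\,d^\times y$ via the change of variable $x=zy$ (the paper does this in one line using $H$-invariance of $\tilde\chi_0$, whereas you unwind it through $\Psi(zy)=\chi_i(z)\Psi(y)$, but the content is identical), then decompose over $p^n\Z_p^\times$, apply Lemmas \ref{intpsi} and \ref{psichi}, and sum the resulting geometric (resp.\ derivative-of-geometric) series. One cosmetic remark: in case $(i)$ the paper states a single unified formula valid for all $r\ge 0$ (with the convention $\tau(\chi_0\chi_i,\psi)=1-p^{-1}$ when $r=0$), so there is no ``second line'' there---your computation is nonetheless the right one and matches once you observe that $L(k-m+1/2,\tilde\chi_0\chi_i^{-1})=1$ whenever $r>0$.
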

\begin{proof}
In order to compute the Euler factors $e_\delta(\pi_p,\chi_0)$, we have to compute the local periods
\[
I_\delta:=\frac{1}{{\rm vol}(H)}\int_{\Q_p^\times}\tilde\chi_0(y)|y|^{m-k}W_{\delta,p}^H\left(\begin{smallmatrix}y&\\&1\end{smallmatrix}\right)d^\times y=\frac{1}{{\rm vol}(H)}\int_{\Q_p^\times}\tilde\chi_0(y)|y|^{m-k}\delta(1_H)(y)d^\times y.
\]
Recalling that $\tilde\chi_0$ is $H$-invariant, we obtain
\[
I_\delta=\frac{1}{{\rm vol}(H)}\int_{\Q_p^\times}\tilde\chi_0(y)|y|^{m-k}\int_{H}\Psi(zy)\psi(-zy)d^\times zd^\times y=\int_{\Q_p^\times}\tilde\chi_0(x)|x|^{m-k}\Psi(x)\psi(-x)d^\times x.
\]

In case $(i)$ we have that $\Psi=|\cdot|^{1/2}\chi_i$,
hence by Lemma \ref{intpsi} and Lemma \ref{psichi}
\begin{eqnarray*}
I_\delta&=&\sum_np^{n(k-m-\frac{1}{2})}\chi_i(p)^n\int_{\Z_p^\times}\chi_0(x)\chi_i(x)\psi(-p^nx)d^\times x\\
&=&\left\{\begin{array}{ll}
\sum_{n\geq0}p^{n(k-m-\frac{1}{2})}\chi_i(p)^n-(1-p^{-1})^{-1}p^{m-k-\frac{1}{2}}\chi_i(p)^{-1};&\chi_0\chi_i\mid_{\Z_p^\times}=1;\\
(1-p^{-1})^{-1}p^{r(m-k-\frac{1}{2})}\chi_i(p)^{-r}\tau(\chi_0\chi_i,\psi);&{\rm cond}(\chi_0\chi_i)=r>0
\end{array}\right.\\
&=&\left\{\begin{array}{ll}
(1-p^{-1})^{-1}(1-p^{m-k-\frac{1}{2}}\chi_i(p)^{-1})(1-p^{k-m-\frac{1}{2}}\chi_i(p))^{-1};&\chi_0\chi_i\mid_{\Z_p^\times}=1;\\
(1-p^{-1})^{-1}p^{r(m-k-\frac{1}{2})}\chi_i(p)^{-r}\tau(\chi_0\chi_i,\psi);&{\rm cond}(\chi_0\chi_i)=r>0
\end{array}\right.
\end{eqnarray*}
Since $e_\delta(\pi_p,\chi_0)=L_p(m-k+1/2,\pi_p,\tilde\chi_0)^{-1}\cdot I_\delta$ and 
\begin{eqnarray*}
L_p(s,\pi_p,\tilde\chi_0)&=&\left\{\begin{array}{lc}L(s,\tilde\chi_0\chi_i)\cdot L(s,\tilde\chi_0\chi_j),&\pi_p=\pi(\chi_i,\chi_j),\\
L(s,\tilde\chi_0\chi_i),&\pi_p=\sigma(\chi_i,\chi_j),\end{array}\right.
\end{eqnarray*}
part $(i)$ follows.

In case $(ii)$ we have that $\Psi=v_p\cdot|\cdot|^{1/2}\chi_i$, hence
we compute 
\begin{eqnarray*}
I_\delta&=&\sum_nnp^{n(k-m-\frac{1}{2})}\chi_i(p)^n\int_{\Z_p^\times}\chi_0(x)\chi_i(x)\psi(-p^nx)d^\times x\\
&=&\left\{\begin{array}{ll}
\sum_{n\geq0}np^{n(k-m-\frac{1}{2})}\chi_i(p)^n+(1-p^{-1})^{-1}p^{m-k-\frac{1}{2}}\chi_i(p)^{-1};&\chi_0\chi_i\mid_{\Z_p^\times}=1;\\
-r(1-p^{-1})^{-1}p^{r(m-k-\frac{1}{2})}\chi_i(p)^{-r}\tau(\chi_0\chi_i,\psi);&{\rm cond}(\chi_0\chi_i)=r>0
\end{array}\right.\\
&=&\left\{\begin{array}{ll}
\frac{p^{k-m-\frac{1}{2}}\chi_i(p)+p^{m-k-\frac{1}{2}}\chi_i(p)^{-1}-2p^{-1}}{(1-p^{-1})(1-p^{k-m-\frac{1}{2}}\chi_i(p))^2};&\chi_0\chi_i\mid_{\Z_p^\times}=1;\\
-r(1-p^{-1})^{-1}p^{r(m-k-\frac{1}{2})}\chi_i(p)^{-r}\tau(\chi_0\chi_i,\psi);&{\rm cond}(\chi_0\chi_i)=r>0,
\end{array}\right.
\end{eqnarray*}
where the second equality follows from the identity $\sum_{n>0}nx^n=x(1-x)^{-2}$.
The result then follows.
\end{proof}

\section{Extremal $p$-adic L-functions}

If $\pi_p=\pi(\chi_1,\chi_2)$ or $\sigma(\chi_1,\chi_2)$ with $\chi_1$ unramified, then the Hecke polynomial $X^2-a_pX+\epsilon(p)p^{k+1}=(x-\alpha)(x-\beta)$, where $\alpha=p^{1/2}\chi_1(p)^{-1}$. This implies that if $\gamma=\alpha$ has small enough valuation, we can always construct $v(\alpha)$-admissible distributions $\mu_\alpha^\pm$ and $\mu_\alpha=\mu_\alpha^++\mu_\alpha^-$. In fact, if $\pi_p=\pi(\chi_1,\chi_2)$ and $\chi_2$ is also unramified, we can sometimes construct a second $v_p(\beta)$-admissible distribution $\mu_\beta$. 

By previous computations, the interpolation property implies that, for any locally polynomial character $\chi=\chi_0(x)x^m\in C_k(\Z_p^\times,\C_p)$,
\[
\int_{\Z_p^\times}\chi d\mu_\alpha=\frac{4\pi }{\Omega_f^\pm i^m}\cdot e_p(\pi_p,\chi_0)\cdot L\left(m-k+\frac{1}{2},\pi,\chi_0\right),
\]
with
\[
e_p(\pi_p,\chi_0)=\left\{\begin{array}{ll}
(1-p^{-1})^{-1}(1-\epsilon(p)\alpha^{-1} p^{m})(1-\alpha^{-1}p^{k-m});&\chi_0\chi_2\mid_{\Z_p^\times}=1;\\
(1-p^{-1})^{-1}p^{rm}\alpha^{-r}\tau(\chi_0\chi_2,\psi);&{\rm cond}(\chi_0\chi_2)=r>0.
\end{array}\right.
\]
This interpolation formula coincides (up to constant) with the classical interpolation formula of the distribution $\mu_{f,\alpha}$ defined in \S \ref{classicdist}. Indeed, it is easy to prove that $\varphi^\pm_{f_\alpha}$ is proportional to $\varphi^\pm_{f,p}(V_0)$ (see equation \eqref{UpV0}), hence the fact that $\mu_{f,\alpha}^\pm$ is proportional to $\mu_{\alpha}^\pm$ follows from \eqref{eqclassmu}, \eqref{defmugen} and property \eqref{keyprop}. 
In fact, if $\Psi$ is a character, all the the admissible $p$-adic distributions constructed in this paper are twists of the $p$-adic distributions described in \S \ref{classicdist} (also in \cite{MTT86}), hence for those situations we only provide a new interpretation of classical constructions.

The only genuine new construction is for the case $\Psi=v_p\cdot|\cdot|^{1/2}\chi$ and $\pi_p=\pi(\chi,\chi)$.
\begin{theorem}\label{mainthm}
Let $f\in S_{k+2}(\Gamma_1(N),\epsilon)$ be a newform, and assume that $\pi_p=\pi(\chi,\chi)$. Then there exists a $(k+1)/2$-admissible distribution $\mu_{f,p}^{\rm ext}$ of $\Z_p^\times$ such that, for any locally polynomial character $\chi=\chi_0(x)x^m\in C_k(\Z_p^\times,\C_p)$,
\[
\int_{\Z_p^\times}\chi d\mu_{f,p}^{\rm ext}=\frac{4\pi }{\Omega_f^\pm i^m}\cdot e_p^{\rm ext}(\pi_p,\chi_0)\cdot L\left(m-k+\frac{1}{2},\pi,\chi_0\right),
\]
with
\[
e_p^{\rm ext}(\pi_p,\chi_0)=\left\{\begin{array}{ll}
\frac{p^{k-m-\frac{1}{2}}\chi(p)+p^{m-k-\frac{1}{2}}\chi(p)^{-1}-2p^{-1}}{1-p^{-1}};&\chi_0\chi\mid_{\Z_p^\times}=1;\\
\frac{-rp^{r(m-k-\frac{1}{2})}\chi(p)^{-r}\tau(\chi_0\chi,\psi)}{1-p^{-1}};&{\rm cond}(\chi_0\chi)=r>0.
\end{array}\right.
\]
\end{theorem}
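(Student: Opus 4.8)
The plan is to specialise the machinery of Section~\ref{padicLfunct} to the hypothesis $\pi_p=\pi(\chi,\chi)$, using the one choice of auxiliary datum that becomes available precisely in this case. Concretely, I would take the locally constant function $\Psi=v_p\cdot|\cdot|^{1/2}\chi$ and the morphism $\delta\colon C(\Z_p^\times,\C)\to\mathcal K$ defined by \eqref{defdelta}. The first thing to check is that $\delta$ really lands in the Kirillov model: by Lemmas~\ref{intpsi} and \ref{psichi} the function $\delta(h)(y)$ vanishes for $|y|$ large, while for $|y|$ small one has $\delta(1_H)(y)=\mathrm{vol}(H)\,\Psi(y)$, so the description of $\mathcal K$ for a principal series with two equal characters (the $\chi_1=\chi_2$ case of \cite[Theorem~4.7.2]{Bump}), according to which the near-zero behaviour of elements of $\mathcal K$ is spanned by $|\cdot|^{1/2}\chi$ and $v_p\cdot|\cdot|^{1/2}\chi$, shows that both $\Psi=|\cdot|^{1/2}\chi$ and $\Psi=v_p\cdot|\cdot|^{1/2}\chi$ are legitimate; the classical cyclotomic measure corresponds to the first, whereas $\mu^{\rm ext}_{f,p}$ will come from the second. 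That $\delta$ is $\Z_p^\times$-equivariant is immediate from the substitution $z\mapsto az$ in \eqref{defdelta} together with \eqref{eqKir}.

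Next I would verify Assumption~\ref{mainassumption}, which is exactly the computation carried out in the construction of the $\delta$'s: with $\eta:=|\cdot|^{1/2}\chi$, property \eqref{keyprop} holds with $m=1$, $\gamma=\eta(p)\,p\,\varepsilon_p(p)^{-1}$, $c_0(a,n)=-n\chi(a)$, $c_1(a,n)=\chi(a)$ (both in $\mathcal O_L$, since $v_p(n)\ge 0$), and $V_0=(1-p^{-1})^{-1}\chi(y)1_{\Z_p}(y)$, $V_1=(1-p^{-1})^{-1}v_p(y)\chi(y)1_{\Z_p}(y)$. Plugging this into \eqref{defmugen} gives distributions $\mu^\pm_{f,\delta}$ on $C_k(\Z_p^\times,\C_p)$, and by Theorem~\ref{thmadm} each of them is $v_p(\gamma)$-admissible. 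The crucial point is to identify $v_p(\gamma)$: the central character of $\pi(\chi,\chi)$ is $\chi^2$, hence $\varepsilon_p(p)=\chi(p)^2$ and
\[
\gamma=|p|^{1/2}\chi(p)\cdot p\cdot\chi(p)^{-2}=p^{1/2}\chi(p)^{-1}=\alpha,
\]
the double root of $X^2-a_pX+\epsilon(p)p^{k+1}$; since $\alpha^2=\epsilon(p)p^{k+1}$ and $v_p(\epsilon(p))=0$ this yields $v_p(\gamma)=(k+1)/2$. Because $(k+1)/2<k+1$, Proposition~\ref{propext} extends $\mu^\pm_{f,\delta}$ to locally analytic measures, and I would define $\mu^{\rm ext}_{f,p}:=\mu^+_{f,\delta}+\mu^-_{f,\delta}$, which is then $(k+1)/2$-admissible.

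It remains to compute the interpolation. Here I would invoke the general formula established in Section~\ref{padicLfunct}: for any locally polynomial character of the form $\chi_0(x)x^m$ with $m\le k$,
\[
\int_{\Z_p^\times}\chi_0(x)x^m\,d\mu^{\rm ext}_{f,p}=\frac{4\pi}{\Omega_f^\pm i^m}\cdot e_\delta(\pi_p,\chi_0)\cdot L\!\left(m-k+\tfrac12,\pi,\chi_0\right),
\]
after the usual identification of the idele class character $\tilde\chi_0$ with the Dirichlet character $\chi_0$, and then read off $e_\delta(\pi_p,\chi_0)$ from Proposition~\ref{EulerFactors}$(ii)$ with $\chi_i=\chi$. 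This is verbatim the expression denoted $e^{\rm ext}_p(\pi_p,\chi_0)$ in the statement, the two branches being $\chi_0\chi|_{\Z_p^\times}=1$ and $\mathrm{cond}(\chi_0\chi)=r>0$.

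I expect the only genuine obstacle to lie in the admissibility bookkeeping of the second step: one must combine the automorphic normalisation $\varepsilon_p=\chi^2$ with the double-root identity $\alpha^2=\epsilon(p)p^{k+1}$ to see that the admissibility index equals $(k+1)/2$, and then observe that $(k+1)/2<k+1$ so that Proposition~\ref{propext} truly applies and $\mu^{\rm ext}_{f,p}$ is an honest locally analytic measure, not merely a distribution on locally polynomial functions. Everything else — the Kirillov-model description, the verification of \eqref{keyprop} for $\Psi=v_p\cdot|\cdot|^{1/2}\chi$, and the Euler factor computation of Proposition~\ref{EulerFactors} — is a direct appeal to results already in place.
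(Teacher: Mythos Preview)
Your proposal is correct and follows essentially the same approach as the paper: specialise the general construction of \S\ref{padicLfunct} to $\Psi=v_p\cdot|\cdot|^{1/2}\chi$, invoke Theorem~\ref{thmadm} for admissibility, and read off the Euler factors from Proposition~\ref{EulerFactors}$(ii)$. The paper's own proof is even terser---it only spells out the computation of $v_p(\gamma)=(k+1)/2$, which you do via the identification $\gamma=\alpha$ and the double-root relation $\alpha^2=\epsilon(p)p^{k+1}$, whereas the paper goes through $\varepsilon_p=\epsilon_p^{-1}|\cdot|^k=\chi^2$ and $v_p(\chi(p))=-k/2$; these are equivalent.
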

\begin{proof}
The only thing that is left to prove is that $\mu_{f,p}^{\rm ext}$ is $(k+1)/2$-admissible, but this follows directly from Theorem \ref{thmadm} and the fact that 
\[
\varepsilon_p=\epsilon_p^{-1}|\cdot|^k=\chi^2,\qquad\gamma=\chi(p)p|p|^{\frac{1}{2}}\varepsilon_p(p)^{-1}=\chi(p)p^{\frac{1}{2}+k}\epsilon_p(p).
\]
Hence $v_p(\gamma)=\frac{1}{2}+k+v_p(\chi(p))=\frac{k+1}{2}$.
\end{proof}


\begin{remark}
Notice that $\mu_{f,p}^{\rm ext}$ has been constructed as the sum 
\[
\mu_{f,p}^{\rm ext}=\mu_{f,p}^{{\rm ext},+}+\mu_{f,p}^{{\rm ext},-}.
\]
\end{remark}

\begin{definition}
We call $\mu_{f,p}^{\rm ext}$ \emph{extremal $p$-adic measure}.
Since $(k+1)/2<k+1$, by Proposition \ref{propext} we can extend $\mu_{f,p}^{\rm ext}$ to a locally analytic measure. Hence we define the \emph{extremal $p$-adic L-function}
\[
L_p^{\rm ext}(f,s):=\int_{\Z_p^\times}{\rm exp}(s\cdot{\rm log(x)})d\mu_{f,p}^{\rm ext}(x).
\]
\end{definition}


Hence, we conclude that in the conjecturally impossible situation that $\pi_p=\pi(\chi,\chi)$, two $p$-adic L-functions coexist
\[
L_p(f,s),\qquad L_p^{\rm ext}(f,s).
\] 
their corresponding interpolation properties look similar but they have completely different Euler factors. 

\subsection{Alternative description}

In the classical setting described in \S \ref{Classical} ($\chi$ unramified), $p$-adic distributions $\mu^{\pm}_{f,p}$ are given by Equation \eqref{eqclassmu}, while extremal $p$-adic distributions satisfy
\begin{eqnarray*}
\int_{U(a,n)}P\left(1,\frac{x-a}{p^n}\right)d\mu^{{\rm ext},\pm}_{f,p}(x)&=&\varphi_{f,p}^\pm(\delta(1_{U(a,n)}))(0-\infty)\left(P\left(X,\frac{Y-aX}{p^n}\right)\right)\\
&=&\frac{1}{\alpha^n}\cdot\varphi^\pm_{f,p}(V_1-nV_0)\left(\frac{a}{p^n}-\infty\right)\left(P\right),
\end{eqnarray*}
where $V_0=(1-p^{-1})^{-1}|y|^{1/2}\chi(y)1_{\Z_p}(y)$ and $V_1=(1-p^{-1})^{-1}v_p(y)|y|^{1/2}\chi(y)1_{p\Z_p}(y)$. Using the relations \eqref{eqKir}, we  compute the action of the Hecke operator $T_p$ on $V_0+V_1$:
\begin{eqnarray*}
T_p (V_0+V_1)&=&\left(\begin{array}{cc}p^{-1}&\\&1\end{array}\right)(V_0+V_1)+\sum_{c\in\Z/p\Z}\left(\begin{array}{cc}1&p^{-1}c\\&p^{-1}\end{array}\right)(V_0+V_1)\\
&=&(V_0+V_1)(p^{-1}y)+\frac{1}{\varepsilon_p(p)}(V_0+V_1)(py)\sum_{c\in\Z/p\Z}\psi(cy)\\
&=&\frac{\alpha|y|^{1/2}\chi(y)}{(1-p^{-1})}\left(v_p(y)1_{\Z_p}(p^{-1}y)+\frac{1+v_p(py)}{p}\sum_{c\in\Z/p\Z}\psi(cy)1_{\Z_p}(py)\right)\\
&=&\frac{|y|^{1/2}\chi(y)}{(1-p^{-1})}2\alpha\left(1+v_p(y)\right)1_{\Z_p}(y)=2\alpha(V_0+V_1)
\end{eqnarray*}
since $\alpha=\gamma=p^{1/2}\chi(p)^{-1}=\varepsilon_p(p)^{-1}p^{1/2}\chi(p)$.
Similarly,
\begin{equation}\label{UpV0}
U_p V_0=\sum_{c\in\Z/p\Z}\left(\begin{array}{cc}1&p^{-1}c\\&p^{-1}\end{array}\right)V_0=\frac{1}{\varepsilon_p(p)}V_0(py)\sum_{c\in\Z/p\Z}\psi(cy)=\alpha V_0.
\end{equation}
Hence, $V_0$ and $V_1$ are basis of the generalized eigenspace of $U_p$, in which $V_0$ is the eigenvector and $V_0+V_1$ is the newform. This implies that (up to constant) $\varphi^\pm_{f,p}(V_0)\stackrel{\cdot}{=}\varphi^\pm_{f_\alpha}$, where $f_\alpha$ is the p-specialization defined in \S \ref{classicdist}, while we have that $\varphi^\pm_{f,p}(V_0+V_1)\stackrel{\cdot}{=}\varphi^\pm_{f}$. We conclude that, in terms of the classical definitions given in \S \ref{classicdist}, the extremal distribution can be described as
\[
\int_{U(a,n)}P\left(1,\frac{x-a}{p^n}\right)d\mu^{{\rm ext},\pm}_{f,p}(x)=\frac{1}{\alpha^n}\cdot\varphi^\pm_{f-(n+1)f_\alpha}\left(\frac{a}{p^n}-\infty\right)\left(P\right).
\]

\section{Overconvergent modular symbols}

For any $r\in p^{\Q}$, let $B[\Z_p,r]=\{z\in\C_p,\;\exists a\in\Z_p,\;|z-a|\leq r\}$. We denote by $A[r]$ the ring of affinoid function on $B[\Z_p,r]$. The ring $A[r]$ has structure of $\Q_p$-Banach algebra with the norm $\parallel f\parallel_r={\rm sup}_{z\in B[\Z_p,r]}|f(z)|$. Denote by $D[r]=\Hom_{\Q_p}(A[r],\Q_p)$ the continuous dual. It is also a Banach space with the norm
\[
\parallel \mu\parallel_r={\rm sup}_{f\in A[r]}\frac{|\mu(f)|}{\parallel f\parallel_r}.
\]
We define 
\[
D^\dagger[r]:=\varprojlim_{r'\in p^\Q,r'>r}D[r'],
\]
where the projective limit is taken with respect the usual maps $D[r_2]\rightarrow D[r_1]$, $r_1>r_2$. Since these maps are injective and compact, the space $D^\dagger[r]$ is endowed with structure of Frechet space.

Given an affinoid $\Q_p$-algebra $R$ and a character $w:\Z_p\rightarrow R^\times$ such that $w\in A[r]\hat\otimes_{\Q_p} R$, we can define an action of the monoid 
\[
\Sigma_0(p)=\left\{\left(\begin{array}{cc}a&b\\c&d\end{array}\right)\in \M_2(\Z_p),\;p\nmid a,\;p\mid c,\;ad-bc\neq 0\right\}
\]
on $A[r]\hat\otimes_{\Q_p} R$ and $D[r]\hat\otimes_{\Q_p} R$ given by
\begin{eqnarray*}
(\gamma\ast_w f)(z)&=&w(a+cz) \cdot f\left(\frac{b+dz}{a+cz}\right),\qquad f\in A[r]\hat\otimes_{\Q_p} R,\\
(\gamma\ast_w \mu)(f)&=&\mu(\gamma^{-1}\ast_wf),\qquad\gamma^{-1}\in\Sigma_0(p),\quad \mu\in D[r]\hat\otimes_{\Q_p} R.
\end{eqnarray*}
Write $D_w[r]$ for the space $D[r]\hat\otimes_{\Q_p} R$ with the corresponding action. Similarly we define 
\[
D_w^\dagger:=\varprojlim_{r'\in p^\Q,r'>r} D_w[r]=D^\dagger[r]\hat\otimes_{\Q_p}R,
\]
by \cite[Lemma 3.2]{Be}. Compatibility with base change and \cite[Lemma 3.5]{Be} imply that, given a morphism of affinoid $\Q_p$-algebras $\varphi: R\rightarrow R'$ we have isomorphisms
\begin{equation}\label{esp}
D_w[r]\otimes_R R'\stackrel{\simeq}{\longrightarrow}D_{\varphi\circ w}[r],\qquad D^\dagger_w[r]\otimes_R R'\stackrel{\simeq}{\longrightarrow}D^\dagger_{\varphi\circ w}[r].
\end{equation}

\begin{definition}
We call the space $\Hom_\Gamma(\Delta_0,D^\dagger_w[r])$ the \emph{space of modular symbols of weight $w$}. We denote by $\Hom^\pm_\Gamma(\Delta_0,D^\dagger_w[r])$ the subgroup of $\Hom_\Gamma(\Delta_0,D^\dagger_w[r])$ of elements
that are fixed or multiplied by $-1$ by the involution given by $\mbox{\tiny$\left(\begin{array}{cc}-1&\\&1\end{array}\right)$}$.
\end{definition}

The action of $\Sigma_0(p)$ on $D^\dagger_w[r]$ induces an action of $U_p$ on $\Hom^\pm_\Gamma(\Delta_0,D^\dagger_w[r])$ given by the formula \eqref{defUp}.

Assume that $R$ is reduced and its norm $|\cdot|$ extends the norm of $\Q_p$. Write as usual $v_p(x)=-\log|x|/\log p$, so that $v_p(p)=1$. Let us consider 
\[
R\{\{T\}\}:=\left\{\sum_{n\geq 0}a_nT^n,\;a_n\in R,\;\lim_n(v_p(a_n)-n\nu)=\infty\mbox{ for all }\nu\in \R\right\}
\]
Given $F(T)\in R\{\{T\}\}$ and $\nu\in \R$,
\[
N(F,\nu):=\max\{n\in\N,\;v_p(a_n)-n\nu={\rm inf}_m(v_p(a_m)-m\nu)\}.
\]
A polynomial $Q(T)\in R[T]\subseteq R\{\{T\}\}$ is $\nu$-dominant if it has degree $N(Q,\nu)$ and, for all $x\in {\rm Sp}(R)$, we have $N(Q,\nu)=N(Q_x,\nu)$. We say that $F(T)\in R\{\{T\}\}$ is \emph{$\nu$-adapted} if there exists a (unique) decomposition
$F(T)=Q(T)\cdot G(T)$, where $Q(T)\in R[T]$ is a $\nu$-dominant polynomial of degree $N(F,\nu)$ and $Q(0)=G(0)=1$.

Since $\Hom^\pm_\Gamma(\Delta_0,D_w[r])$ satisfies property (Pr) of \cite[\S 2]{Buz} and $U_p$ acts compactly, one can define the characteristic power series $F(T)\in R\{\{T\}\}$ of $U_p$ acting on $\Hom^\pm_\Gamma(\Delta_0,D_w[r])$. We say that $R$ is \emph{$\nu$-adapted} for some $\nu\in\R$, if $F$ is $\nu$-adapted. If this is the case, we can define the submodule $\Hom^\pm_\Gamma(\Delta_0,D_w[r])^{\leq\nu}$ of \emph{slope bounded by $\nu$ modular symbols} as the kernel of $Q(U_p)$ in $\Hom^\pm_\Gamma(\Delta_0,D_w[r])$.

We write $\Hom^\pm_\Gamma(\Delta_0,D^\dagger_w[r])^{\leq\nu}$ for the intersection
\[
\Hom^\pm_\Gamma(\Delta_0,D^\dagger_w[r])^{\leq\nu}:=\Hom^\pm_\Gamma(\Delta_0,D^\dagger_w[r])\cap \Hom^\pm_\Gamma(\Delta_0,D_w[r'])^{\leq\nu}
\] 
in $\Hom^\pm_\Gamma(\Delta_0,D_w[r'])$, for any $r'>r$. 

\subsection{Control Theorem}\label{CtrlThm}

Let us consider the character
\[
k:\Z_p^\times\rightarrow\Q_p^\times,\qquad x\longmapsto x^k.
\]
Then we have a morphism of $\Sigma_0(p)$-modules
\[
\rho_k^\ast:D^\dagger_k[1]\longrightarrow V(k):=V(k)_{\Q_p};\qquad\rho_k^\ast(\mu)(P):=\mu(P(1,z)).
\]
This provides a morphism
\begin{equation}\label{morphrho}
\rho_k^\ast:\Hom^\pm_\Gamma(\Delta_0,D^\dagger_k[1])\longrightarrow \Hom^\pm_\Gamma(\Delta_0,V(k))
\end{equation}
\begin{theorem}[Steven's control Theorem]\label{ctrlthm}
The above morphism induces an isomorphism of $\Q_p$-vector spaces 
\[
\rho_k^\ast:\Hom^\pm_\Gamma(\Delta_0,D^\dagger_k[1])^{< k+1}\longrightarrow \Hom^\pm_\Gamma(\Delta_0,V(k))^{< k+1}.
\]
\end{theorem}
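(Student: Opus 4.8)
The plan is to follow Stevens' original strategy, comparing the overconvergent distributions $D^\dagger_k[1]$ with the finite-dimensional $V(k)$ by controlling the kernel and cokernel of $\rho_k^\ast$ on the level of coefficient modules, and then passing to the $U_p$-slope decomposition.

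First I would identify the short exact sequence of $\Sigma_0(p)$-modules underlying $\rho_k^\ast$. The map $D^\dagger_k[1]\to V(k)$ sends a distribution $\mu$ to the functional $P\mapsto\mu(P(1,z))$; its kernel consists of distributions that annihilate all polynomials of degree $\le k$ in the single variable $z$. So there is an exact sequence $0\to \mathrm{Fil}\to D^\dagger_k[1]\xrightarrow{\rho_k^\ast} V(k)\to 0$, where $\mathrm{Fil}$ is the submodule of distributions killing $1,z,\dots,z^k$. The key point is to show that $U_p$ acts on $\mathrm{Fil}$ with all slopes $\ge k+1$: concretely, if $\mu\in\mathrm{Fil}$ then the first $k+1$ moments of $U_p\mu$ pick up an extra factor of $p^{k+1}$ coming from the $\begin{pmatrix}1&c\\&p\end{pmatrix}$-action on monomials $z^j$ with $j\ge k+1$ together with the weight character $k$, so that $\|U_p\mu\|\le p^{-(k+1)}\|\mu\|$ up to a bounded constant, hence $v_p$ of every eigenvalue of $U_p$ on $\mathrm{Fil}$ is at least $k+1$.

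Next I would take the long exact sequence in $\Hom_\Gamma(\Delta_0,-)$ (using that $\Delta_0$ is, after Manin's trick and the standard resolution, built from induced modules so that the relevant $H^1$ vanishes, or at any rate that the connecting maps are $U_p$-equivariant), obtaining
\[
\Hom^\pm_\Gamma(\Delta_0,\mathrm{Fil})\to \Hom^\pm_\Gamma(\Delta_0,D^\dagger_k[1])\xrightarrow{\rho_k^\ast}\Hom^\pm_\Gamma(\Delta_0,V(k))\to H^1_\Gamma(\Delta_0,\mathrm{Fil}),
\]
all maps $U_p$-equivariant. Then I apply the slope-$<k+1$ projector: since $U_p$ on $\mathrm{Fil}$ (hence on both $\Hom$ and $H^1$ of $\mathrm{Fil}$) has all slopes $\ge k+1$, the slope-$<k+1$ part of both outer terms vanishes. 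Because slope decomposition is exact on the category with property (Pr) (cf.\ the discussion following \cite[\S 2]{Buz} and \cite[Lemma 3.5]{Be}), taking $\le\nu$ parts for any $\nu<k+1$ and letting $\nu\to k+1$ gives that $\rho_k^\ast$ restricts to an isomorphism on the slope-$<k+1$ subspaces, which is exactly the claim.

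The main obstacle I expect is making the slope bound on $\mathrm{Fil}$ rigorous at the level of the Fréchet/Banach spaces $D^\dagger_k[1]$ rather than just for the compact operator's characteristic series: one must check that $\mathrm{Fil}$ is itself a suitable $\Sigma_0(p)$-stable closed subspace on which $U_p$ is still compact and satisfies (Pr), and that the filtration by vanishing moments interacts correctly with the radius-$r'>1$ limit defining $D^\dagger$. The cleanest route is to argue with an explicit orthonormal(izable) basis of monomials $(z-a)^j$, compute the matrix of $U_p$ with respect to it, and read off that the submatrix acting on $j\ge k+1$ has entries in $p^{k+1}\co_{\C_p}$ after normalization; this is the computation I would not carry out in detail here but which drives the whole argument, and it is precisely where the numerical coincidence ``$v_p(\gamma)=(k+1)/2<k+1$'' from Theorem \ref{mainthm} is what guarantees the extremal symbol survives in the slope-$<k+1$ part.
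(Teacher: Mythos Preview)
The paper does not prove this statement at all: its entire proof is the sentence ``See \cite[Theorem 7.1]{S} and \cite[Theorem 5.4]{SP}.'' So there is nothing in the paper to compare your argument against; you are supplying what the paper deliberately outsources.

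Your sketch is the standard Stevens/Pollack--Stevens strategy and is broadly correct. A couple of points where your outline is looser than the actual proofs in those references: first, the long exact sequence in $\Hom_\Gamma(\Delta_0,-)$ is not quite how the argument is usually run, since surjectivity at the level of modular symbols (not just coefficient modules) needs a separate lifting argument rather than an appeal to vanishing $H^1$; Pollack--Stevens construct the lift explicitly by an inductive approximation using the $U_p$-operator. Second, the slope bound on the kernel $\mathrm{Fil}$ is obtained in \cite{SP} not by a direct norm estimate on $U_p$ but by identifying $\mathrm{Fil}$ with a twist of distributions in weight $-2-k$ via a $(k{+}1)$-fold differentiation map $\Theta^{k+1}$, under which $U_p$ acquires an explicit factor of $p^{k+1}$. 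Your heuristic ``the submatrix on $j\ge k+1$ has entries in $p^{k+1}\cO_{\C_p}$'' is the shadow of this, and would need the $\Theta$-map to be made precise at the Fr\'echet level.

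Your closing remark about $(k+1)/2<k+1$ is a nice contextual observation, but note it is not part of the proof of the control theorem itself; it is rather why the paper can \emph{apply} the control theorem to the extremal symbol afterwards.
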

\begin{proof}
See \cite[Theorem 7.1]{S} and \cite[Theorem 5.4]{SP}.
\end{proof}

\subsection{Extremal modular symbols}

Let $f\in S_{k+2}(N,\epsilon)$ as before, and assume that the Hecke polynomial $x^2-a_px+\epsilon(p)p^{k+1}$ has a double root $\alpha$.
We have defined admissible locally analytic measures $\mu^{{\rm ext},\pm}_{f,p}$ characterized by
\[
\int_{a+p^n\Z_p}P\left(1,\frac{x-a}{p^n}\right)d\mu^{{\rm ext},\pm}_{f,p}(x)=\frac{1}{\alpha^n}\cdot\varphi^\pm_{f-(n+1)f_\alpha}\left(\frac{a}{p^n}-\infty\right)\left(P\right),
\]
for any $P\in \cP(k)_{\Q}$. Our aim is to describe $\mu^{{\rm ext},\pm}_{f,p}$ as the evaluation at $0-\infty$ of certain overconvergent modular symbol $\Hom^\pm_\Gamma(\Delta_0,D^\dagger_k[0])$.

Notice that, if we write $g_n:=f-(n+1)f_\alpha$ and $\gamma_{a,n}:=\left(\begin{array}{cc}1&a\\&p^n\end{array}\right)$,
\begin{eqnarray*}
\int_{\Z_p}\gamma_{a,n}^{-1}\left(\rho_k(P)1_{\Z_p}\right)(x)d\mu^{{\rm ext},\pm}_{f,p}(x)&=&\int_{a+p^n\Z_p}P\left(1,\frac{x-a}{p^n}\right)d\mu^{{\rm ext},\pm}_{f,p}(x)\\
&=&\frac{1}{\alpha^n}\cdot\varphi^\pm_{g_n}\left(\frac{a}{p^n}-\infty\right)\left(P\right)\\
&=&\frac{1}{\alpha^n}\cdot\varphi^\pm_{g_n}\left(\gamma_{a,n}(0-\infty)\right)\left(P\right)\\
&=&\left(\frac{1}{p\alpha}\right)^n\cdot\varphi^\pm_{g_n\mid_{\gamma_{a,n}}}\left(0-\infty\right)\left(\gamma_{a,n}^{-1}P\right).
\end{eqnarray*}
Moreover, the elements $\gamma_{a,n}^{-1}\left(\rho_k(P)1_{\Z_p}\right)\in A[p^{-n}]$ for all $n\in\N$, $a\in\Z_p$, and these functions form a dense set in $\bigcup_{n\geq 0}A[p^{-n}]$.
\begin{lemma}
For any divisor $D\in \Delta_0$, the expression
\[
\gamma_{a,n}^{-1}\left(\rho_k(P)1_{\Z_p}\right)\longmapsto\left(\frac{1}{p\alpha}\right)^n\cdot\varphi^\pm_{g_n\mid_{\gamma_{a,n}}}\left(D\right)\left(\gamma_{a,n}^{-1}P\right)
\] 
extends to a measure in $\hat\varphi_{\rm ext}^\pm(D)\in D_k^\dagger[1]$.
\end{lemma}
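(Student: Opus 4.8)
The statement asserts that a certain finitely-additive assignment on a dense family of functions in $\bigcup_{n\geq 0}A[p^{-n}]$ extends to a continuous functional, i.e.\ an element of $D_k^\dagger[1]$. The natural strategy is to \emph{build} such an overconvergent modular symbol directly from the ordinary (non-overconvergent) data and then check it does the job. Concretely, recall that $f-(n+1)f_\alpha = g_n$ and that $\varphi^\pm_{g_n}$ is, up to the period $\Omega^\pm_f$, a $\C$-valued (in fact $\bar\Q$-valued) classical modular symbol with values in $V(k)$. The first step is to recognize that $g_n$ depends \emph{affinely} on $n$, so we should look for $\hat\varphi^\pm_{\rm ext}$ in the form of a ``derivative'' of the family of classical symbols $\varphi^\pm_{f-sf_\alpha}/\alpha^s$ in the Coleman-family variable. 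Equivalently: the extremal distribution arises because $V_0$ and $V_0+V_1$ span the \emph{generalized} $U_p$-eigenspace with eigenvalue $\alpha$ (see \eqref{UpV0} and the $T_p$-computation above), so the correct overconvergent lift is obtained by applying Stevens' control theorem (Theorem \ref{ctrlthm}) not to an eigenvector but to a vector in the generalized eigenspace.

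\textbf{Step 1: produce the overconvergent symbol.} Consider the classical symbol $\Phi^\pm:=\varphi^\pm_f$ together with $\Phi_\alpha^\pm:=\varphi^\pm_{f_\alpha}$ in $\Hom^\pm_\Gamma(\Delta_0,V(k))$. Both have slope $\leq v_p(\alpha)=(k+1)/2< k+1$, so by Theorem \ref{ctrlthm} each lifts uniquely to $\tilde\Phi^\pm,\tilde\Phi_\alpha^\pm\in\Hom^\pm_\Gamma(\Delta_0,D_k^\dagger[1])^{<k+1}$, with $U_p\tilde\Phi_\alpha^\pm=\alpha\tilde\Phi_\alpha^\pm$ and $U_p\tilde\Phi^\pm=\alpha\tilde\Phi^\pm + (\text{multiple of }\tilde\Phi_\alpha^\pm)$ coming from the $T_p$/$U_p$ relation on $g_n$; here one uses that the $U_p$-operator commutes with $\rho_k^\ast$. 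Define
\[
\hat\varphi^\pm_{\rm ext}(D):=\tilde\Phi^\pm(D)-\tilde\Phi_\alpha^\pm(D)\quad\text{``evaluated with the affine twist in }n\text{,''}
\]
more precisely one checks, using the $\Sigma_0(p)$-action formula $(\gamma\ast_k\mu)(f)=\mu(\gamma^{-1}\ast_k f)$ and the relation $U_p\tilde\Phi^\pm=\alpha(\tilde\Phi^\pm+\tilde\Phi_\alpha^\pm)$ rewritten via $\gamma_{a,n}$, that the ``derivative'' symbol $\hat\varphi^\pm_{\rm ext}$ satisfying
\[
(U_p - \alpha)\hat\varphi^\pm_{\rm ext} = \alpha\,\tilde\Phi_\alpha^\pm
\]
has the prescribed evaluations. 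The point is that pushing $U_p^n$ through the divisor $0-\infty$ exactly produces the combinatorial sum over $a\bmod p^n$ of $\gamma_{a,n}$-translates, and the linear-in-$n$ coefficient $-(n+1)$ of $f_\alpha$ in $g_n$ is precisely the coefficient one gets by differentiating $\alpha^{-s}\cdot(\text{eigenvector with eigenvalue }\alpha\, q^s)$ — i.e.\ it is a Bella\"iche-type secondary class.

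\textbf{Step 2: check the formula on the dense family.} Having a candidate $\hat\varphi^\pm_{\rm ext}(D)\in D_k^\dagger[1]$, one must verify
\[
\hat\varphi^\pm_{\rm ext}(D)\bigl(\gamma_{a,n}^{-1}(\rho_k(P)1_{\Z_p})\bigr)=\Bigl(\tfrac{1}{p\alpha}\Bigr)^n\varphi^\pm_{g_n\mid\gamma_{a,n}}(D)(\gamma_{a,n}^{-1}P).
\]
Unwinding the left side using $(\gamma\ast_k\mu)(f)=\mu(\gamma^{-1}\ast_k f)$ turns $\gamma_{a,n}^{-1}(\rho_k(P)1_{\Z_p})$ back into the statement about $\gamma_{a,n}\hat\varphi^\pm_{\rm ext}(D)$ evaluated on $\rho_k(P)1_{\Z_p}$, and then $\rho_k^\ast$ together with $U_p$-equivariance collapses $\gamma_{a,n}$ acting on the distribution into $\det$-twisted $\gamma_{a,n}$ acting on the classical symbol, reproducing the $p$-adic-measure computation of Section \ref{padicLfunct} (the chain of equalities just before \eqref{calcint} and the alternative description at the end of Section \ref{padicLfunct}). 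Since the functions $\gamma_{a,n}^{-1}(\rho_k(P)1_{\Z_p})$ are dense in $\bigcup_n A[p^{-n}]$ and $\hat\varphi^\pm_{\rm ext}(D)$ is by construction a genuine continuous functional (an element of a Fr\'echet space $D_k^\dagger[1]$), the identity on the dense set determines it, and conversely the prescribed values are bounded (by $(k+1)/2$-admissibility, Theorem \ref{thmadm}), so they do extend continuously — this is what makes the two descriptions coincide.

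\textbf{Main obstacle.} The delicate point is \emph{uniqueness/well-definedness} of the affine-in-$n$ lift: a priori the assignment only sees the combinations $g_n=f-(n+1)f_\alpha$, and one must be sure that the ``$n$-derivative'' is not just a formal artifact but actually lands in $D_k^\dagger[1]$ rather than merely in $D_k[r]$ for each individual $r$. This is exactly where the control theorem in its overconvergent-Fr\'echet form (Theorem \ref{ctrlthm} applied to the generalized eigenspace, using that $v_p(\alpha)=(k+1)/2<k+1$ is strictly non-critical) is indispensable: it guarantees the generalized eigenspace of $U_p$ on $\Hom^\pm_\Gamma(\Delta_0,D_k^\dagger[1])$ maps isomorphically onto its classical counterpart, so the $2$-dimensional classical generalized eigenspace spanned by $\tilde\Phi^\pm$ and $\tilde\Phi_\alpha^\pm$ lifts uniquely and compatibly, and the secondary class $\hat\varphi^\pm_{\rm ext}$ is pinned down. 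Once that is in hand, everything else is the bookkeeping of translating between the Kirillov-model description of $V_0,V_1$ and the $\Sigma_0(p)$-action on distributions, which is routine given the computations already carried out.
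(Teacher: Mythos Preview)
Your approach is correct and takes a genuinely different route from the paper's. The paper's proof is entirely constructive and does \emph{not} invoke the control theorem: it first establishes additivity of the assignment (using $U_p g_{n+1}=\alpha g_n$), then exhibits the growth bound $\hat\varphi^\pm_{\rm ext}(D)(\gamma_{a,N}^{-1}(\rho_k(P)1_{\Z_p}))\in A\cdot p^{-N(k+1)/2}\cO_{\C_p}$ directly from the definition, and finally extends to the functions $P_m^{a,N}$ with $m>k$ by the Vishik--Amice--V\'elu limiting procedure (defining $\hat\varphi^\pm_{\rm ext}(D)(P_m^{a,N})$ as $\lim_n a_n$ and checking the sequence is Cauchy). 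Only \emph{afterwards}, in \S\ref{actionUp} and the subsequent subsection, does the paper compute the $U_p$-action and the specialization $\rho_k^\ast\hat\varphi^\pm_{\rm ext}=\varphi^\pm_{f-f_\alpha}$.

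Your route reverses this logic: invoke Theorem~\ref{ctrlthm} to lift the two-dimensional classical generalized eigenspace $\bar\Q_p\varphi^\pm_f\oplus\bar\Q_p\varphi^\pm_{f_\alpha}$ (legitimate, since $v_p(\alpha)=(k+1)/2<k+1$), set $\hat\varphi^\pm_{\rm ext}:=\tilde\Phi^\pm-\tilde\Phi^\pm_\alpha$, and then verify the formula on the dense family. This is sound, but your Step~2 is too telegraphic: the verification is not immediate ``unwinding''. What one must actually do is induct on $n$, using $(U_p-\alpha)\hat\varphi^\pm_{\rm ext}=\alpha\tilde\Phi^\pm_\alpha$ together with the identity $\gamma_{a',n-1}\gamma_{c,1}=\gamma_{a,n}$ (for $c\equiv a\bmod p$, $a'=(a-c)/p$) and \eqref{eqint} to reduce the level-$n$ formula to the level-$(n-1)$ formula at the shifted divisor $\gamma_{c,1}D$, picking up exactly the extra $-f_\alpha$ that turns $g_{n-1}$ into $g_n$. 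This is essentially the computation of \S\ref{actionUp} run backwards. The two approaches therefore trade places: the paper builds the distribution by hand and later deduces the generalized-eigenspace description, whereas you import that description from Stevens' theorem and deduce the explicit formula. Your route is conceptually cleaner once Theorem~\ref{ctrlthm} is in hand; the paper's route is self-contained, yields the admissibility bound as a byproduct, and parallels the direct constructions of \S\ref{padicLfunct} more closely.
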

\begin{proof}
we have to show \emph{additivity}, namely, since 
\[
\gamma_{a,n}^{-1}\left(\rho_k(P)1_{\Z_p}\right)=
\sum_{b\equiv a\;{\rm mod} \;p^{n}} \gamma_{b,n+1}^{-1}\left(\rho_k(\gamma_bP)1_{\Z_p}\right),\quad\gamma_{b}:=\left(\begin{array}{cc}1&\frac{b-a}{p^n}\\0&p\end{array}\right),
\]
we have to show that 
\[
\left(\frac{1}{p\alpha}\right)^n\cdot\varphi^\pm_{g_n\mid_{\gamma_{a,n}}}\left(D\right)\left(\gamma_{a,n}^{-1}P\right)=
\sum_{b\equiv a\;{\rm mod} \;p^{n}}\left(\frac{1}{p\alpha}\right)^{n+1}\cdot\varphi^\pm_{g_{n+1}\mid_{\gamma_{b,n+1}}}\left(D\right)\left(\gamma_{b,n+1}^{-1}\gamma_bP\right).
\]
Indeed, we have that $\gamma_{b,n+1}^{-1}\gamma_b=\gamma_{a,n}^{-1}$, thus the above equation follows from the fact that $g_n\in S_{k+2}(\Gamma,\epsilon)$ satisfies $U_p g_{n+1}=\frac{1}{p}\sum_{b\equiv a}g_{n+1}\mid_{\gamma_b}=\alpha\cdot g_{n}$.

First we notice that by \eqref{eqint}, for any $P\in \cP(k)_{\Z_p}$,
\[
\hat\varphi_{\rm ext}^+(D)(\gamma_{a,N}^{-1}\left(\rho_k(P)1_{\Z_p}\right))=\left(\frac{1}{\alpha}\right)^N\cdot\varphi^+_{g_N}\left(\gamma_{a,N}D\right)\left(P\right)\in A\cdot p^{-N \frac{k}{2}}\cO_{\C_p},
\]
for big enough $N$ since $v_p(\alpha)=k/2$.

On the other hand, any locally analytic function is topologically generated by functions of the form $P_m^{a,N}(x):=\left(\frac{x-a}{p^N}\right)^m1_{a+p^N}(x)$, where $m\in\N$. The functions $\gamma_{a,N}^{-1}\left(\rho_k(P)1_{\Z_p}\right)$ are generated by $P^{a,N}_m$ when $m\leq k$, hence our distribution must be determined by 
\[
\hat\varphi_{\rm ext}^\pm(D)(P^{a,N}_m)=\left(\frac{1}{p\alpha}\right)^N\cdot\varphi^\pm_{g_N\mid_{\gamma_{a,N}}}\left(D\right)\left(\gamma_{a,N}^{-1}(x^{k-m}y^m)\right),\qquad m\leq k. 
\]
If $m>k$, we define $\hat\varphi_{\rm ext}^\pm(D)(P_m^{a,N})=\lim_{n\rightarrow\infty}a_n$, where
\[
a_n=\sum_{b\;{\rm mod}\; p^{n};\;b\equiv a\;{\rm mod}\; p^N}\sum_{j\leq k}\left(\frac{b-a}{p^N}\right)^{m-j}\binom{m}{j}p^{j(n-N)}\hat\varphi_{\rm ext}^\pm(D)(P_j^{b,n}).
\]
The limit converge because $\{a_n\}_n$ is Cauchy, indeed by additivity
\[
a_{n_2}-a_{n_1}=\sum_{j\leq h}\sum_{b\equiv a\;(p^{n_2})}\sum_{b'\equiv b\;(p^{n_1})}\sum_{k=h+1}^{m}r(k)\binom{k}{j}\left(\frac{b'-b}{p^N}\right)^{k-j}p^{(n_2-N)j}\hat\varphi_{\rm ext}^\pm(D)(P_{j}^{b',n_2}),
\]
where $r(k)=\binom{m}{k}\left(\frac{b'-a}{p^N}\right)^{m-k}$.
Since 
\[
\left(\frac{b'-b}{p^N}\right)^{k-j}p^{(n_2-N)j}\hat\varphi_{\rm ext}^\pm(D)(P_{j}^{b',n_2})\in A\cdot p^{(n_1-n_2)(k-j)}p^{k\left(\frac{n_2}{2}-N\right)}\cO_{\C_p},
\]
we have that $a_{n+1}-a_{n}\stackrel{n}{\rightarrow} 0$. 
Hence we have extended $\hat\varphi_{\rm ext}^\pm(D)$ to a locally analytic measure by continuity, which is determined by the image of locally polynomial functions of degree at most $k$.
\end{proof}

The above lemma implies that $\hat\varphi_{\rm ext}^\pm\in \Hom(\Delta_0,D_k^\dagger[1])$. Let us check that it is $\Gamma$-equivariant: 
For any $g\in\Gamma$, it is easy to show that $g\gamma_{a,n}^{-1}1_{\Z_p}=\gamma_{g^{-1}a,n}^{-1}1_{\Z_p}$, where $\left(\begin{array}{cc}\alpha&\beta\\\gamma&\delta\end{array}\right)a=\frac{\beta+\delta a}{\alpha+\gamma a}$. Thus by \eqref{GL_2-equiv}
\begin{eqnarray*}
\hat\varphi_{\rm ext}^\pm(g D)(g\gamma_{a,n}^{-1}\left(\rho_k(P)1_{\Z_p}\right))&=&\hat\varphi_{\rm ext}^\pm(g D)(\gamma_{g^{-1}a,n}^{-1}\left(\rho_k(\gamma_{g^{-1}a,n}g\gamma_{a,n}^{-1}P)1_{\Z_p}\right))\\
&=&\left(\frac{1}{p\alpha}\right)^n\cdot\varphi^\pm_{g_n\mid_{\gamma_{g^{-1}a,n}}}\left(gD\right)\left(g\gamma_{a,n}^{-1}P\right)\\
&=&\left(\frac{1}{p\alpha}\right)^n\cdot\varphi^\pm_{g_n\mid_{\gamma_{g^{-1}a,n}g}}\left(D\right)\left(\gamma_{a,n}^{-1}P\right)\\
&=&\hat\varphi_{\rm ext}^\pm(D)(\gamma_{a,n}^{-1}\left(\rho_k(P)1_{\Z_p}\right))
\end{eqnarray*}
where the last equality has been obtained from the fact that $\gamma_{g^{-1}a,n}g\gamma_{a,n}^{-1}\in\Gamma$ and $g_n$ is $\Gamma$-invariant for all $n$.
One easily checks that $\hat\varphi^\pm$ is in the corresponding $\mbox{\tiny$\left(\begin{array}{cc}-1&\\&1\end{array}\right)$}$-subspace
\[
\hat\varphi_{\rm ext}^\pm\in  \Hom^\pm_\Gamma(\Delta_0,D_k^\dagger[1]).
\]
From the definition it is easy to check the following result
\begin{proposition}
The measures $\mu^{{\rm ext},\pm}_{f,p}$ and $\mu^{{\rm ext}}_{f,p}$ can be obtained as
\[
\mu^{{\rm ext},\pm}_{f,p}=\hat\varphi_{\rm ext}^\pm(0-\infty)\mid_{\Z_p^\times},\qquad \mu^{{\rm ext}}_{f,p}=\hat\varphi_{\rm ext}(0-\infty)\mid_{\Z_p^\times},
\]
where $\hat\varphi_{\rm ext}:=\hat\varphi_{\rm ext}^++\hat\varphi_{\rm ext}^-$.
\end{proposition}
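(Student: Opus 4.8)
The plan is to compare, on each side of the two claimed identities, a pair of locally analytic measures on $\Z_p^\times$, by checking they agree on a generating family of locally polynomial test functions and then appealing to the uniqueness of the admissible extension. Concretely, $\mu^{{\rm ext},\pm}_{f,p}$ is admissible of slope $<k+1$ by Theorem~\ref{mainthm}, and the restriction of $\hat\varphi_{\rm ext}^\pm(0-\infty)\in D^\dagger_k[1]$ to $\Z_p^\times$ — i.e. $g\mapsto\hat\varphi_{\rm ext}^\pm(0-\infty)(g\cdot 1_{\Z_p^\times})$ — is a locally analytic measure which, by the valuation estimates in the proof of the preceding Lemma, is again admissible of slope $<k+1$. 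By Proposition~\ref{propext} each of these two measures is then recovered from its restriction to $C_k(\Z_p^\times,\C_p)$ by one and the same limiting recipe, so it suffices to check that these two restrictions coincide; equivalently, that the two measures agree on the functions $P\bigl(1,\tfrac{x-a}{p^n}\bigr)1_{a+p^n\Z_p}$ with $a\in\Z_p^\times$, $n\geq 1$ and $P\in\cP(k)_\Q$.

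For the comparison itself I would write $\gamma_{a,n}=\left(\begin{smallmatrix}1&a\\0&p^n\end{smallmatrix}\right)$ and $g_n=f-(n+1)f_\alpha$, and use the function identity $P\bigl(1,\tfrac{x-a}{p^n}\bigr)1_{a+p^n\Z_p}(x)=\gamma_{a,n}^{-1}\bigl(\rho_k(P)1_{\Z_p}\bigr)(x)$. By the very definition of $\hat\varphi_{\rm ext}^\pm$ the value of $\hat\varphi_{\rm ext}^\pm(0-\infty)$ on this function is
\[
\Bigl(\tfrac{1}{p\alpha}\Bigr)^{n}\varphi^\pm_{g_n\mid_{\gamma_{a,n}}}(0-\infty)\bigl(\gamma_{a,n}^{-1}P\bigr)=\tfrac{1}{\alpha^n}\,\varphi^\pm_{g_n}\bigl(\tfrac{a}{p^n}-\infty\bigr)(P),
\]
where the equality uses $\gamma_{a,n}(0-\infty)=\tfrac{a}{p^n}-\infty$, $\det(\gamma_{a,n})=p^n$ and relation \eqref{eqint} — this is exactly the chain of equalities recorded just before the preceding Lemma. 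On the other hand, in the classical ($\chi$ unramified) setting the defining value of $\int_{a+p^n\Z_p}P\bigl(1,\tfrac{x-a}{p^n}\bigr)\,d\mu^{{\rm ext},\pm}_{f,p}$ is also $\tfrac{1}{\alpha^n}\varphi^\pm_{g_n}\bigl(\tfrac{a}{p^n}-\infty\bigr)(P)$, by the alternative description of $\mu^{{\rm ext},\pm}_{f,p}$. Hence the two measures agree on the whole generating family and therefore coincide; the unsigned identity then follows by linearity, since $\mu^{{\rm ext}}_{f,p}=\mu^{{\rm ext},+}_{f,p}+\mu^{{\rm ext},-}_{f,p}$ by the Remark following Theorem~\ref{mainthm} and $\hat\varphi_{\rm ext}=\hat\varphi_{\rm ext}^++\hat\varphi_{\rm ext}^-$ by definition.

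I expect the only genuinely non-formal point — the reason the statement is merely ``easy to check'' rather than immediate — to be the bookkeeping in the first step: one must verify that restricting the overconvergent measure $\hat\varphi_{\rm ext}^\pm(0-\infty)$ from $B[\Z_p,1]$ down to $\Z_p^\times$ produces an admissible locally analytic measure whose slope still lets Proposition~\ref{propext} apply, so that the agreement of the two measures on locally polynomial functions of bounded degree genuinely propagates to all locally analytic functions. Everything else is a direct unwinding of the definitions already performed in the computations preceding the statement.
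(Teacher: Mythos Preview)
Your proposal is correct and is exactly the unwinding-of-definitions argument the paper has in mind when it says ``from the definition it is easy to check''. In fact the paper gives no further proof, and the chain of equalities you invoke is precisely the one displayed right before the preceding Lemma; since the extension procedure used in that Lemma to build $\hat\varphi_{\rm ext}^\pm(D)$ is the same limiting recipe as in Proposition~\ref{propext}, the agreement on locally polynomial test functions forces the agreement of the two locally analytic measures, as you say.
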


\subsection{Action of $U_p$}\label{actionUp}

Recall that the action of $\Sigma_0(p)$ on $\Hom_\Gamma(\Delta_0,D_k^\dagger[1])$ provides an action of the Hecke operator $U_p$, the aim of this section is to compute $U_p\hat\varphi_{\rm ext}^\pm$. Notice that it is enough to compute the image of the functions $f_{a,n,P}:=\gamma_{a,n}^{-1}\left(\rho_k(P)1_{\Z_p}\right)$:
\begin{eqnarray*}
(U_p\hat\varphi_{\rm ext}^\pm)(D)(f_{a,n,P})&=&\sum_{c\;{\rm mod}\;p}\hat\varphi_{\rm ext}^\pm(\gamma_{c,1}D)(\gamma_{c,1}\gamma_{a,n}^{-1}\left(\rho_k(P)1_{\Z_p}\right))\\
&=&\hat\varphi_{\rm ext}^\pm(\gamma_{a,1}D)(\gamma_{0,n-1}^{-1}\left(\rho_k(P)1_{\Z_p}\right))\\
&=&\left(\frac{1}{p\alpha}\right)^{n-1}\cdot\varphi^\pm_{g_{n-1}\mid_{\gamma_{0,n-1}}}\left(\gamma_{a,1}D\right)\left(\gamma_{0,n-1}^{-1}P\right)\\
&=&\frac{1}{p}\left(\frac{1}{p\alpha}\right)^{n-1}\cdot\varphi^\pm_{g_{n-1}\mid_{\gamma_{0,n-1}\gamma_{a,1}}}\left(D\right)\left(\gamma_{a,1}^{-1}\gamma_{0,n-1}^{-1}P\right)\\
&=&\alpha\left(\frac{1}{p\alpha}\right)^{n}\cdot\varphi^\pm_{g_{n-1}\mid_{\gamma_{a,n}}}\left(D\right)\left(\gamma_{a,n}^{-1}P\right).
\end{eqnarray*}
Since $g_n=g_{n-1}-f_\alpha$, we deduce that 
\begin{equation}\label{actUp}
U_p\hat\varphi_{\rm ext}^\pm=\alpha\cdot\left(\hat\varphi_{\rm ext}^\pm+\hat\varphi^\pm\right),
\end{equation}
where $\hat\varphi^\pm\in \Hom^\pm_\Gamma(\Delta_0,D_k^\dagger[1])$ is the classical overconvergent modular symbol corresponding through Theorem \ref{ctrlthm} to the eigenvector with eigenvalue $\alpha$ given by $f_\alpha$.

\subsection{Specialization of $\hat\varphi_{\rm ext}^\pm$}

Theorem \ref{ctrlthm} asserts that the morphism $\rho_k^\ast$ of \eqref{morphrho} becomes an isomorphism when we restrict ourselves to generalized eigenspaces for $U_p$ with valuation of the eigenvector strictly less than $k+1$. We have seen that $\hat\varphi_{\rm ext}^\pm$ lives in the eigenspace of eigenvalue $\alpha$, and we know that $v_p(\alpha)=k/2$. Thus, it corresponds bijectively to an element of $\Hom^\pm_\Gamma(\Delta_0,V(k))$. We can easily compute the image $\rho_k^\ast \hat\varphi_{\rm ext}^\pm$ just calculating the image of the polynomical functions $\rho_k(P)1_{\Z_p}$:
\[
\hat\varphi_{\rm ext}^\pm(D)(\rho_k(P)1_{\Z_p})=\left(\frac{1}{p\alpha}\right)^0\cdot\varphi^\pm_{g_0}\left(D\right)\left(P\right)=\varphi^\pm_{f-f_\alpha}\left(D\right)\left(P\right).
\]
Thus, $\rho_k^\ast \hat\varphi_{\rm ext}^\pm=\varphi^\pm_{f-f_\alpha}$, that corresponds via Eichler-Shimura to the modular form $f-f_\alpha$. This fact fits with Theorem \ref{ctrlthm} since $f-f_\alpha$ belongs to the generalized eigenspace, indeed, $(U_p-\alpha)^2(f-f_\alpha)=0$.

\section{Extremal p-adic L-functions in families}

\subsection{Weight space}

Let $\cW/\Q_p$ be the standard one-dimensional weight space. It is a rigid analytic space that classify characters of $\Z_p^\times$, namely, 
\[
\cW=\Hom_{\rm cnt}(\Z_p^\times, \G_m).
\] 
If $L$ is any normed extension of $\Q_p$, we write $\tilde w:\Z_p^\times\rightarrow L^\times$ for the continuous morphism of groups corresponding to a point $w\in\cW(L)$.

If $k \in \Z$, then the morphism $\tilde k(t) = t^k$ for all $t \in \Z_p^\times$ defines a point in $\cW(\Q_p)$ that we will also denote by $k$. Thus $\Z\subset\cW(\Q_p)$, and we call points in $\Z$ inside $\cW(\Q_p)$ integral weights.

If $W={\rm Sp}R$ is an admissible affinoid of $\cW$, the immersion ${\rm Sp}(R)=W\hookrightarrow\cW$ defines an element $K\in\cW(R)$ such that, for every $w\in W(\Q_p)\hookrightarrow\cW(\Q_p)$, we have $\tilde w=w\circ\tilde K$. By \cite[Lemma 3.3]{Be}, there exists $r(W)>1$ such that the morphism 
\[
\Z_p\longrightarrow R^\times,\qquad z\longmapsto \tilde K(1+pz)
\]
belongs to $A[r(W)](R)$. We say that $W$ is \emph{nice} if the points $\Z\cap W$ are dense in $W$ and both $R$ and $R_0/pR_0$ are PID, where $R_0$ is the unit ball for the supremum norm in $R$.

\subsection{The Eigencurve}

For a fixed nice affinoid subdomain $W={\rm Sp}R$ of $\cW$, we can consider the $R$-modules $\Hom_\Gamma^\pm(\Delta_0,D_{\tilde K}[r])$, for $1<r\leq r(W)$. By \cite[Proposition 3.6]{Be}, we have that the space $\Hom_\Gamma^\pm(\Delta_0,D_{\tilde K}[r])$ is potentially orthonormalizable Banach $R$-module. The elements of the Hecke algebra $\cH=\Z[T_q,\langle n\rangle,U_p]$ act continuously and $U_p$ acts compactly. 

If we consider $\Hom_\Gamma^\pm(\Delta_0,D^\dagger_{\tilde K}[r])$, \cite[Theorem 3.10]{Be} asserts that, for any $w\in W(\Q_p)$ and any real number $1<r\leq r(W)$, there natural $\cH$-equivariant morphism
\begin{equation}\label{special}
\Hom_\Gamma^\pm(\Delta_0,D^\dagger_{\tilde K}[r])\otimes_{R,w}\Q_p\longrightarrow \Hom_\Gamma^\pm(\Delta_0,D_{\tilde w}[r])
\end{equation}
is always injective and surjective except when $w=0$ and the sign $\pm$ is -1.

The $R$-modules $\Hom^\pm_\Gamma(\Delta_0,D_w[r])$ for all $1<r\leq r(W)$ are all $\nu$-adapted if one is, in which case we say that $W={\rm Sp}R$ is $\nu$-adapted. If $W$ is $\nu$-adapted the restriction maps define isomorphisms between the $R$-modules $\Hom^\pm_\Gamma(\Delta_0,D_{\tilde w}[r])^{\leq \nu}$ for all $1<r\leq r(W)$.
Thus we obtain an isomorphism
\begin{equation}\label{isoOC}
\Hom^\pm_\Gamma(\Delta_0,D^\dagger_{\tilde w}[r])^{\leq\nu}\simeq\Hom^\pm_\Gamma(\Delta_0,D_{\tilde w}[r])^{\leq\nu},\qquad 1<r\leq r(W),
\end{equation}
as seen in \cite[Proposition 3.11]{Be}.

The eigencurves $\cC^{\pm}\stackrel{\kappa}{\rightarrow}\cW$ can be constructed as the union of local pieces 
\[
\cC^{\pm}_{W,\nu}\longrightarrow W={\rm Sp}R,
\]
where $\nu\in \R$ is a real and $W$ is a nice affinoid subspace adapted to $\nu$. By definition 
\[
\cC^\pm_{W,\nu}={\rm Sp}\mathbb{T}^\pm_{W,\nu},
\]
where $\mathbb{T}^\pm_{W,\nu}$ is the $R$-subalgebra of $\End_R(\Hom_\Gamma^\pm(\Delta_0,D^\dagger_{\tilde K}[1])^{\leq\nu})$ generated by the image of the Hecke algebra $\cH$.

\begin{remark}
The cuspidal parts of $\cC^+_{W,\nu}$ and $\cC_{W,\nu}^-$ coincide by \cite[Theorem 3.27]{Be}, hence we will sometimes identify certain  neighbourhoods of cuspidal points. 
\end{remark}

\subsection{Specialization}

Let $w\in W(\Q_p)$ and write $\Hom_\Gamma^\pm(\Delta_0,D^\dagger_{\tilde w}[1])^{\leq\nu}_g$ for the image of the composition. 
\begin{equation}\label{comps}
\Hom_\Gamma^\pm(\Delta_0,D^\dagger_{\tilde K}[1])^{\leq\nu}\otimes_{R,w}\Q_p\stackrel{\eqref{special}}{\longrightarrow}\Hom_\Gamma^\pm(\Delta_0,D_{\tilde w}[1])^{\leq\nu}\stackrel{\eqref{isoOC}}{\longrightarrow}\Hom_\Gamma^\pm(\Delta_0,D^\dagger_{\tilde w}[1])^{\leq\nu}
\end{equation}
In analogy with previous definition, we write $\mathbb{T}^\pm_{w,\nu}$ for the $\Q_p$-subalgebra of the endomorphism ring $\End_{\Q_p}(\Hom_\Gamma^\pm(\Delta_0,D^\dagger_{\tilde w}[1])^{\leq\nu}_g)$ generated by the image of the Hecke algebra $\cH$. By definition, there is a correspondence between points $x\in {\rm Spec}\mathbb{T}^\pm_{w,\nu}(\bar\Q_p)$ and systems of $\cH$-eigenvalues appearing in $\Hom_\Gamma^\pm(\Delta_0,D^\dagger_{\tilde w}[1])^{\leq\nu}_g$. For any such $x$, we denote by 
\[
\Hom_\Gamma^\pm(\Delta_0,D^\dagger_{\tilde w}[1])_{(x)}
\] 
the generalized eigenspace of the corresponding eigenvalues. Similarly, we denote by $(\mathbb{T}^\pm_{w,\nu})_{(x)}$ the localization of $\mathbb{T}^\pm_{w,\nu}\otimes_{\Q_p}\bar\Q_p$ at the maximal ideal corresponding to $x$. We have that
\begin{equation}\label{eqlocx}
\Hom_\Gamma^\pm(\Delta_0,D^\dagger_{\tilde w}[1])_{(x)}=\Hom_\Gamma^\pm(\Delta_0,D^\dagger_{\tilde w}[1])^{\leq\nu}\otimes_{\mathbb{T}^\pm_{w,\nu}}(\mathbb{T}^\pm_{w,\nu})_{(x)}.
\end{equation}

Since by definition $\Hom_\Gamma^\pm(\Delta_0,D^\dagger_{\tilde K}[1])^{\leq\nu}\otimes_{R,w}\Q_p\simeq\Hom_\Gamma^\pm(\Delta_0,D^\dagger_{\tilde w}[1])^{\leq\nu}_g$, we have a natural \emph{specialization map}
\[
s_w:\mathbb{T}^\pm_{W,\nu}\otimes_{R,w}\Q_p\longrightarrow \mathbb{T}^\pm_{w,\nu}.
\]
By \cite[Lemme 6.6]{Che} the morphism $s_w$ is surjective for all $w\in\cW(\Q_p)$ and its kernel is nilpotent. In particular
\[
{\rm Spec} \mathbb{T}^\pm_{w,\nu}(\bar\Q_p)=\kappa^{-1}(w)(\bar\Q_p), \qquad \kappa:\cC^\pm\longrightarrow\cW.
\] 
Given $x\in{\rm Spec} \mathbb{T}^\pm_{w,\nu}(\bar\Q_p)\subset \cC^\pm_{W,\nu}(\bar\Q_p)$, we can consider the rigid analytic localization $(\mathbb{T}^\pm_{W,\nu})_{(x)}$ of $\mathbb{T}^\pm_{W,\nu}\otimes_{\Q_p}\bar\Q_p$ at the maximal ideal corresponding to $x$.
Notice that, if we denote by $R_{(w)}$  the rigid analytic localization of $R\otimes_{\Q_p}\bar\Q_p$ at the maximal ideal corresponding to $w$, then $(\mathbb{T}^\pm_{W,\nu})_{(x)}$ is naturally a $R_{(w)}$-algebra. Localizing at $x$ we obtain a surjective local morphism of finite local $\bar\Q_p$-algebras with nilpotent kernel
\begin{equation}\label{defsw}
s_w:(\mathbb{T}^\pm_{W,\nu})_{(x)}\otimes_{R_{(w)},w}\bar\Q_p\longrightarrow (\mathbb{T}^+_{w,\nu})_{(x)}.
\end{equation}

\begin{lemma}\label{charTx}
We have that
\[
(\mathbb{T}^\pm_{w,\nu})_{(x)}\simeq \bar\Q_p[X]/X^2,
\]
where $X$ corresponds to the element of the Hecke algebra $U_p-\alpha$.
\end{lemma}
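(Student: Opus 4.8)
The plan is to exhibit the module on which $(\mathbb{T}^\pm_{w,\nu})_{(x)}$ acts as a two-dimensional $\bar\Q_p$-vector space on which $U_p$ acts by a single Jordan block while every other Hecke operator acts by a scalar; the isomorphism $(\mathbb{T}^\pm_{w,\nu})_{(x)}\simeq\bar\Q_p[X]/X^2$ then follows formally. Throughout, $w=k$ is the integral weight of $f$ and $x$ is the point attached to $(f,\alpha)$; recall $v_p(\alpha)=k/2<k+1$, so $x$ lies in the slope-$<(k+1)$ part and Theorem~\ref{ctrlthm} applies. (We also have $k\geq 1$, since weight $2$ is excluded by the double-root hypothesis \cite{ColEd}, so the pathological case $w=0$ of \eqref{special} does not occur.)

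First I would pin down $M:=\Hom_\Gamma^\pm(\Delta_0,D^\dagger_{\tilde w}[1])_{(x)}$. The $\cH$-equivariant isomorphism $\rho_k^\ast$ of Theorem~\ref{ctrlthm} carries $M$ isomorphically onto the generalized $x$-eigenspace of $\Hom_\Gamma^\pm(\Delta_0,V(k))$, which by the Eichler--Shimura isomorphism and multiplicity one is exactly the two-dimensional $f$-oldspace at level $Np$ (spanned by the symbols attached to $f_\alpha$ and to $f-f_\alpha=\beta f(p\,\cdot)$). On the overconvergent side, the symbols $\hat\varphi^\pm$ and $\hat\varphi_{\rm ext}^\pm$ of \S\ref{actionUp} both lie in $M$: indeed $U_p\hat\varphi^\pm=\alpha\hat\varphi^\pm$, while by \eqref{actUp} one has $(U_p-\alpha)^2\hat\varphi_{\rm ext}^\pm=(U_p-\alpha)(\alpha\hat\varphi^\pm)=0$. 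They are linearly independent, since $(U_p-\alpha)\hat\varphi_{\rm ext}^\pm=\alpha\hat\varphi^\pm\neq 0$ whereas $(U_p-\alpha)\hat\varphi^\pm=0$. Hence $M=\bar\Q_p\hat\varphi^\pm\oplus\bar\Q_p\hat\varphi_{\rm ext}^\pm$.

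Next I would read off the Hecke action on $M$ in the basis $(\hat\varphi^\pm,\hat\varphi_{\rm ext}^\pm)$. Since $\rho_k^\ast\hat\varphi^\pm=\varphi^\pm_{f_\alpha}$ and $\rho_k^\ast\hat\varphi_{\rm ext}^\pm=\varphi^\pm_{f-f_\alpha}$ both correspond to the newform $f$, both basis vectors are common eigenvectors, with the eigenvalues of $f$, for every element of $\cH$ other than $U_p$ (using that $\rho_k^\ast$ is $\cH$-equivariant and injective here); all such operators therefore act on $M$ as scalars. By \eqref{actUp}, $U_p$ acts by $\left(\begin{smallmatrix}\alpha&\alpha\\0&\alpha\end{smallmatrix}\right)$, so $U_p-\alpha$ is a nonzero nilpotent of square zero. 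Consequently the subalgebra of $\End_{\bar\Q_p}(M)$ generated by the image of $\cH$ equals $\bar\Q_p\cdot\mathrm{Id}\oplus\bar\Q_p\cdot(U_p-\alpha)$, which is $\bar\Q_p[X]/X^2$ via $X\mapsto U_p-\alpha$.

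Finally I would transfer this to $(\mathbb{T}^\pm_{w,\nu})_{(x)}$. By \eqref{eqlocx} it acts on $M$, and since $\mathbb{T}^\pm_{w,\nu}$ is by construction a subalgebra of an endomorphism ring it is Artinian and, after $\otimes_{\Q_p}\bar\Q_p$, splits as the product of its localizations, the factor at $x$ acting faithfully on $M$. Thus $(\mathbb{T}^\pm_{w,\nu})_{(x)}$ injects into $\End_{\bar\Q_p}(M)$ with image the Hecke subalgebra just computed, which gives the claim. The crux of the argument -- and the only essential use of the double-root hypothesis -- is the non-semisimplicity of $U_p$ on $M$, i.e.\ $U_p-\alpha\neq 0$, which is precisely relation \eqref{actUp} and reflects the fact that the unique $p$-stabilization $f_\alpha$ cannot span the two-dimensional oldspace. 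The one additional point needing care is that the classical generalized eigenspace is \emph{exactly} two-dimensional (and not larger), which is exactly what the control theorem together with multiplicity one delivers.
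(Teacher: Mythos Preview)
Your proposal is correct and follows essentially the same approach as the paper: identify the generalized eigenspace via the control theorem (Theorem~\ref{ctrlthm}) as the two-dimensional space $\bar\Q_p\hat\varphi^\pm\oplus\bar\Q_p\hat\varphi_{\rm ext}^\pm$, then invoke the $U_p$-action computed in \S\ref{actionUp} together with the scalar action of the remaining Hecke operators. You supply more detail than the paper (the explicit Jordan block, the linear-independence check, the faithfulness argument for the localized algebra, and the remark that $w=0$ is excluded), but the structure of the argument is the same.
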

\begin{proof}
Equation \eqref{eqlocx} shows that $(\mathbb{T}^\pm_{w,\nu})_{(x)}$ is the $\Q_p$-subalgebra of the endomorphism ring $\End_{\Q_p}(\Hom_\Gamma^\pm(\Delta_0,D^\dagger_{\tilde w}[1])^{\leq\nu}_{(x)})$ generated by the image of the Hecke algebra $\cH$. By Theorem \ref{ctrlthm} we have
\[
\Hom_\Gamma^\pm(\Delta_0,D^\dagger_{\tilde w}[1])^{\leq\nu}_{(x)}=\Hom_\Gamma^\pm(\Delta_0,V(k))^{\leq\nu}_{(x)}=\bar\Q_p\hat\varphi^\pm+\bar\Q_p\hat\varphi_{\rm ext}^\pm,
\]
Hence the result follows from results of \S \ref{actionUp} and the fact that Hecke operators $T_q$ and $\langle n\rangle$ act by scalar.
\end{proof}

\begin{definition}
Any classical cuspidal non-critical $y\in\cC^\pm(\bar\Q_p)$ corresponds to a $p$-stabilized normalized cuspidal modular symbol $\varphi_{f'_{\alpha'}}^\pm$ of weight $\kappa(y)+2$. In this situation, we write
\[
\mu_y^\pm:=\mu_{f',\alpha'}^\pm.
\]
Analogously, in our irregular situation given by $x\in\cC^\pm(\bar\Q_p)$, we write
\[
\mu_{x}^{{\rm ext},\pm}:=\mu_{f,p}^{{\rm ext},\pm}.
\]
\end{definition}

\subsection{Two variable $p$-adic L-functions}\label{secLmu}

In this irregular situation, Betina and Williams define in  \cite{BW} two variable $p$-adic L-functions $\cL_p^\pm$ that interpolate the $p$-adic L-functions $\mu_y^\pm$ as $y\in\cC^\pm(\bar\Q_p)$ runs over classical points in a neighbourhood of $x\in\cC^\pm(\bar\Q_p)$. In this section, we recall their construction and we give a relation between $\cL_p^{\pm}$ and $\mu_{x}^{{\rm ext},\pm}$.

\begin{proposition}
The space $\Hom_\Gamma^\pm(\Delta_0,D^\dagger_{\tilde K}[1])_{(x)}$ is a free $(\mathbb{T}^\pm_{W,\nu})_{(x)}$-module of rank one.
\end{proposition}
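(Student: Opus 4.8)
The plan is to show freeness of rank one by combining the local structure theorems from the Betina--Williams picture with the slope-$\nu$ overconvergent machinery recalled above. First I would recall that, by \cite[Proposition 3.6]{Be} and the discussion around \eqref{isoOC}, the module $M^\pm:=\Hom_\Gamma^\pm(\Delta_0,D^\dagger_{\tilde K}[1])^{\leq\nu}$ is a finite projective $R$-module on which $\cH$ acts, and that $(\mathbb{T}^\pm_{W,\nu})_{(x)}$ is the localization at $x$ of the image of $\cH$ in $\End_R(M^\pm)$; localizing $M^\pm$ at $x$ gives the $(\mathbb{T}^\pm_{W,\nu})_{(x)}$-module $N^\pm:=\Hom_\Gamma^\pm(\Delta_0,D^\dagger_{\tilde K}[1])_{(x)}$ whose freeness we want. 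The strategy is then the standard one for \'etale-type points on eigenvarieties: show $N^\pm$ is faithfully flat (in fact free) over $(\mathbb{T}^\pm_{W,\nu})_{(x)}$ by reducing to the fibre over $w=\kappa(x)$ and invoking Nakayama, since both rings are finite local $\bar\Q_p$-algebras (as established in Lemma \ref{charTx} and the paragraph preceding \eqref{defsw}).

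The key computational input is the specialization at $w$. By \eqref{comps} and \eqref{special} (which is an isomorphism here because $w=k$ is an integral weight and, in the cuspidal regime, the sign obstruction at $w=0$ either does not apply or is handled by the identification of cuspidal parts of $\cC^+$ and $\cC^-$), we have
\[
N^\pm\otimes_{R_{(w)},w}\bar\Q_p\simeq\Hom_\Gamma^\pm(\Delta_0,D^\dagger_{\tilde w}[1])_{(x)},
\]
and by Theorem \ref{ctrlthm} together with the proof of Lemma \ref{charTx} this fibre is the two-dimensional space $\bar\Q_p\hat\varphi^\pm\oplus\bar\Q_p\hat\varphi_{\rm ext}^\pm$, on which $U_p-\alpha$ acts by the formula \eqref{actUp}, i.e.\ as the nilpotent Jordan block sending $\hat\varphi_{\rm ext}^\pm\mapsto\alpha\hat\varphi^\pm$ and $\hat\varphi^\pm\mapsto 0$. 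Hence this fibre is free of rank one over $(\mathbb{T}^\pm_{w,\nu})_{(x)}\simeq\bar\Q_p[X]/X^2$ with generator $\hat\varphi_{\rm ext}^\pm$ (and $X=U_p-\alpha$). Since $s_w$ of \eqref{defsw} is surjective with nilpotent kernel, the fibre $N^\pm\otimes_{R_{(w)},w}\bar\Q_p$ is also a module over $(\mathbb{T}^\pm_{W,\nu})_{(x)}\otimes_{R_{(w)},w}\bar\Q_p$, and it is cyclic there, generated by (any lift of) $\hat\varphi_{\rm ext}^\pm$.

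The final step is to promote cyclicity of the special fibre to freeness of rank one over $(\mathbb{T}^\pm_{W,\nu})_{(x)}$. I would pick any element $\Phi_{\rm ext}^\pm\in N^\pm$ lifting $\hat\varphi_{\rm ext}^\pm$ and consider the $(\mathbb{T}^\pm_{W,\nu})_{(x)}$-linear map $(\mathbb{T}^\pm_{W,\nu})_{(x)}\to N^\pm$, $t\mapsto t\cdot\Phi_{\rm ext}^\pm$. By the topological Nakayama lemma (both modules are finite over the complete local Noetherian ring $R_{(w)}$, and reduction mod $\mathfrak m_w$ makes the map surjective by the previous paragraph), this map is surjective. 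For injectivity I would compare ranks: $(\mathbb{T}^\pm_{W,\nu})_{(x)}$ and $N^\pm$ are both finite flat (indeed locally free, using projectivity of $M^\pm$ over $R$ and that $R_{(w)}$ is regular of dimension one) over $R_{(w)}$, and after the faithfully flat base change $R_{(w)}\to\bar\Q_p$ we have just seen that source and target have the same $\bar\Q_p$-dimension; a surjection of finite flat modules of equal generic rank over the one-dimensional regular local ring $R_{(w)}$ is an isomorphism. Therefore $N^\pm$ is free of rank one over $(\mathbb{T}^\pm_{W,\nu})_{(x)}$, as claimed. The main obstacle is bookkeeping the sign-$\pm$ subtlety of \eqref{special} at $w=0$ and verifying that $M^\pm$ is genuinely flat (not merely torsion-free) over $R_{(w)}$ so that the rank comparison is legitimate; both are addressed by working in a nice $\nu$-adapted affinoid $W$ and using that the relevant point is classical, cuspidal and non-critical for the ambient Coleman family.
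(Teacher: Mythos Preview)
The paper does not give its own argument here; its entire proof is the citation \cite[Proposition 4.10]{BW}. Your proposal is essentially a reconstruction of the standard argument behind that citation, and the overall strategy (reduce to the fibre at $w$, identify it via the control theorem and \eqref{actUp} as free of rank one over $\bar\Q_p[X]/X^2$, then lift by Nakayama) is correct and matches what one expects Betina--Williams to do.

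There is one gap in your injectivity step. You assert that after base change to $\bar\Q_p$ ``source and target have the same $\bar\Q_p$-dimension'', but you have only shown that $N^\pm\otimes_{R_{(w)},w}\bar\Q_p$ is $2$-dimensional and that $(\mathbb{T}^\pm_{W,\nu})_{(x)}\otimes_{R_{(w)},w}\bar\Q_p$ \emph{surjects} onto it; this yields $\dim\geq 2$ on the Hecke side, not equality, precisely because the kernel of $s_w$ in \eqref{defsw} is only known to be nilpotent, not zero. The rank-comparison route therefore does not close as written. The fix is both simpler and more robust than comparing ranks: once Nakayama gives that $\Phi_{\rm ext}^\pm$ generates $N^\pm$ over $(\mathbb{T}^\pm_{W,\nu})_{(x)}$, the map $t\mapsto t\cdot\Phi_{\rm ext}^\pm$ is automatically injective because any $t$ in the kernel annihilates the cyclic module $N^\pm$ and hence vanishes by faithfulness of the Hecke action --- which holds by the very definition of $\mathbb{T}^\pm_{W,\nu}$ as a subalgebra of $\End_R(M^\pm)$, a property preserved under the flat localization at $x$ (using that $M^\pm$ is finite projective over $R$). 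As a byproduct this shows $s_w$ is an isomorphism at $x$, which you can then feed back into your dimension count if you wish.
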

\begin{proof}
\cite[Proposition 4.10]{BW}.
\end{proof}

\begin{corollary}
After possibly shrinking $W$, there exists a connected component $V={\rm Sp}(T)\subset \cC^\pm_{W,\nu}$ through $x$ such that $T$ is Gorestein and 
\[
\cM_\pm:=\Hom_\Gamma^\pm(\Delta_0,D^\dagger_{\tilde K}[1])^{\leq\nu}\otimes_{\mathbb{T}^\pm_{W,\nu}}T
\]
is a free $T$-module of rank one.
\end{corollary}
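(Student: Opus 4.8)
The plan is to deduce the corollary from the preceding Proposition (that is, from \cite[Proposition 4.10]{BW}) by a spreading--out argument; the whole point is to upgrade the \emph{pointwise} freeness at $x$ to freeness over an entire connected component, at the cost of shrinking $W$. I would first record the integral structure at our disposal. Writing $M^\pm:=\Hom_\Gamma^\pm(\Delta_0,D^\dagger_{\tilde K}[1])^{\leq\nu}$, this is the slope--$\leq\nu$ part of a potentially orthonormalizable Banach $R$--module on which $U_p$ acts compactly, hence it is finite projective over $R$; as $W$ is nice, $R$ is a PID, so $M^\pm$ is $R$--free of finite rank. Consequently $\mathbb{T}^\pm_{W,\nu}\hookrightarrow\End_R(M^\pm)$ is finite and $R$--torsion--free, hence $R$--free, and so is every ring direct factor of it. Once we take $V=\Spec T$ to be the connected component of $\cC^\pm_{W,\nu}$ through $x$, the ring $T$ is such a direct factor, $\cM_\pm=M^\pm\otimes_{\mathbb{T}^\pm_{W,\nu}}T$ is a coherent sheaf on $V$, and $T$ acts faithfully on it: if $e$ is the idempotent cutting out $V$ then $\mathbb{T}^\pm_{W,\nu}=T\times T'$ and $M^\pm=\cM_\pm\oplus\cM'$ compatibly, and faithfulness of $\mathbb{T}^\pm_{W,\nu}$ on $M^\pm$ forces faithfulness of $T$ on $\cM_\pm$.

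For the freeness of $\cM_\pm$, the preceding Proposition says that $M^\pm_{(x)}$ is free of rank one over $(\mathbb{T}^\pm_{W,\nu})_{(x)}$; I would choose a basis vector and lift it to some $v\in M^\pm$ (valid after a harmless finite base change on which $x$ is rational). Then $v$ generates $M^\pm$ over $\mathbb{T}^\pm_{W,\nu}$ near $x$: the quotient $M^\pm/\mathbb{T}^\pm_{W,\nu}v$ is coherent and its stalk at $x$ vanishes, hence it vanishes in a neighbourhood of $x$. Restricting to $V$, the support of the coherent sheaf $\cM_\pm/Tv$ is a Zariski--closed subset of $V$ not containing $x$; since $\kappa$ is finite its image in $W$ is closed and misses $w:=\kappa(x)$, so after shrinking $W$ to a small enough disc around $w$ this support becomes empty, i.e. $\cM_\pm=Tv$. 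Thus $\cM_\pm$ is a cyclic $T$--module, and as $T$ acts faithfully on it, ${\rm Ann}_T(v)={\rm Ann}_T(\cM_\pm)=0$, whence $\cM_\pm\cong T$ is free of rank one.

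For the Gorenstein property I would work locally at $x$. Since $V$ is a connected component, $T_{(x)}=(\mathbb{T}^\pm_{W,\nu})_{(x)}$ is finite and flat, hence free, over the discrete valuation ring $R_{(w)}$. As in the proof of Lemma \ref{charTx} (using Theorem \ref{ctrlthm}), $M^\pm_{(x)}\otimes_{R_{(w)}}\bar\Q_p$ is identified with $\Hom_\Gamma^\pm(\Delta_0,V(k))^{\leq\nu}_{(x)}=\bar\Q_p\hat\varphi^\pm\oplus\bar\Q_p\hat\varphi_{\rm ext}^\pm$, which is two--dimensional; since $M^\pm_{(x)}\cong T_{(x)}$ over $T_{(x)}$, it follows that $T_{(x)}$ is $R_{(w)}$--free of rank two and that $T_{(x)}\otimes_{R_{(w)}}\bar\Q_p$ is two--dimensional. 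Comparing with Lemma \ref{charTx} and the surjection $s_w$ recalled above (whose kernel is nilpotent), this two--dimensional reduction must coincide with $(\mathbb{T}^\pm_{w,\nu})_{(x)}\cong\bar\Q_p[X]/X^2$, $X$ being the image of $U_p-\alpha$; in particular it is generated over $\bar\Q_p$ by the image of $U_p-\alpha$, so by Nakayama $T_{(x)}$ is generated over $R_{(w)}$ by $U_p-\alpha$, i.e. $T_{(x)}\cong R_{(w)}[X]/(f)$ with $f$ monic of degree two. This presents $T_{(x)}$ as a hypersurface over the regular ring $R_{(w)}$, hence a local complete intersection, hence Gorenstein. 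As the Gorenstein locus of $\Spec T$ is open and contains $x$, one final shrinking of $W$ makes $T$ Gorenstein.

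The genuine content lies not in any single step but in the repeated passage from data at the one point $x$ --- supplied by the preceding Proposition, by Theorem \ref{ctrlthm} and by Lemma \ref{charTx} --- to statements over the whole component $V$. Each time the mechanism is the same: the ``bad'' set (where $v$ fails to generate, or where $T$ fails to be Gorenstein) is Zariski--closed in $V$, disjoint from the fibre $\kappa^{-1}(w)\cap V=\{x\}$, hence has closed image in $W$ missing $w$, and is therefore eliminated by shrinking $W$ since $\kappa$ is finite. The one slightly delicate point --- that the reduction $T_{(x)}\otimes_{R_{(w)}}\bar\Q_p$ is actually cyclic, i.e. that the nilpotent kernel of $s_w$ contributes no extra generator --- is forced by the dimension count coming from Theorem \ref{ctrlthm}, and it is precisely this that makes the complete--intersection argument for Gorenstein--ness go through.
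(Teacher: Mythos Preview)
The paper does not give its own argument here: its entire proof is the citation \cite[Corollary 4.11]{BW}. Your proposal therefore cannot be compared to a proof in the paper, only to the fact that the author is content to quote Betina--Williams. What you have written is essentially the standard spreading--out argument that underlies that reference, and it is correct.

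Two small points of phrasing are worth tightening. First, when you pass from ``the support of $\cM_\pm/Tv$ does not contain $x$'' to ``its image in $W$ misses $w$'', you are implicitly using that $\kappa^{-1}(w)\cap V=\{x\}$; this is not automatic for the initially chosen connected component, but it becomes true after a preliminary shrinking of $W$ (since $T$ is finite over $R$ and Henselian--type idempotent lifting separates the points of the fibre as $W$ shrinks). It would be cleaner to say this once at the outset and then run all three shrinking steps. Second, in the Gorenstein step, the passage from ``$T_{(x)}$ is generated over $R_{(w)}$ by $U_p-\alpha$'' to ``$T_{(x)}\cong R_{(w)}[X]/(f)$ with $f$ monic of degree two'' is best justified by noting that the characteristic polynomial of multiplication by $U_p-\alpha$ on the rank--two free $R_{(w)}$--module $T_{(x)}$ already gives a monic relation of degree two, so the surjection $R_{(w)}[X]/(f)\twoheadrightarrow T_{(x)}$ is between free $R_{(w)}$--modules of the same rank and is therefore an isomorphism; your parenthetical ``$R_{(w)}[X]$ is a UFD'' is not the relevant mechanism.

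With these cosmetic adjustments your argument is complete and self--contained, and in fact supplies the details that the paper outsources to \cite{BW}.
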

\begin{proof}
\cite[Corollary 4.11]{BW}.
\end{proof}

From the formalism of Gorestein rings, it follows that the $R$-linear dual $\cM_\pm^\vee:=\Hom_R(\cM_\pm,R)$ is free of rank one over $T$. 
Let $\cR$ be the $\Q_p$-algebra of locally analytic distributions of $\Z_p^\times$. We have a natural morphism $D^\dagger[1]\rightarrow \cR$ provided by the extension-by-zero map. This induces a morphism $\iota:D^\dagger_{\tilde K}[1]\rightarrow \cR\hat\otimes_{\Q_p}R$ and a $R$-linear morphism
\begin{eqnarray*}
{\rm Mel}:\Hom_\Gamma^\pm(\Delta_0,D^\dagger_{\tilde K}[1])&\longrightarrow &\cR\hat\otimes_{\Q_p}R\\
\varphi&\longmapsto&\iota\left(\varphi(0-\infty)\right)
\end{eqnarray*}
Since $V$ is a connected component of the eigencurve, $\cM_\pm$ is a direct summand of $\Hom_\Gamma^\pm(\Delta_0,D^\dagger_{\tilde K}[1])^{\leq\nu}$.
Thus the restriction of ${\rm Mel}$ defines an element of $\cR\hat\otimes_{\Q_p}\cM_\pm^\vee$.  
\begin{definition}
By choosing a basis of $\cM_\pm^\vee$ over $T$, the above construction provides 
\[
\cL_p^\pm\in \cR\hat\otimes_{\Q_p} T
\]
called the \emph{the two variables $p$-adic L-functions}.
\end{definition}

Write $\bar\Q_p[\varepsilon]:=\bar\Q_p[X]/(X^2)$, and let us consider the morphism
\[
x[\varepsilon]^\ast:T\longrightarrow T_{(x)}=(T_{W,\nu}^\pm)_{(x)}\longrightarrow (T_{W,\nu}^\pm)_{(x)}\otimes_{R_{(w)},w}\bar\Q_p\stackrel{s_w}{\longrightarrow}(T_{w,\nu}^\pm)_{(x)}\simeq\bar\Q_p[\varepsilon],
\]
given by \eqref{defsw} and Lemma \ref{charTx}. This provides a point $x[\epsilon]\in V(\bar\Q_p[\varepsilon])$ lying above $x\in V(\bar\Q_p)$.

\begin{theorem}
For any $y\in V(\bar\Q_p)$ corresponding to a small slope $p$-stabilized cuspidal eigenform, 
\[
\cL_p^\pm=C^\pm(y)\cdot \mu_{y}^\pm\in \cR, 
\]
for some $C\pm(y)\in \bar\Q_p^\times$. We can normalize $\cL_p^\pm$ by choosing the right $T$-basis $\phi^\pm$ of $\cM_\pm^\vee$ so that $C^\pm(x)=1$. Moreover, for a good choice of $\phi^\pm$,
\[
\cL^\pm_p(x[\epsilon])=\mu_{x}^\pm+\alpha^{-1}\mu_{x}^{{\rm ext},\pm}\varepsilon\in\cR\otimes_{\Q_p}\bar\Q_p[\varepsilon].
\]
\end{theorem}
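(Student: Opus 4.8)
The argument combines the interpolation theorem of Betina--Williams with the explicit description of the rank-two generalized eigenspace at $x$ from \S\ref{actionUp}.

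\emph{Classical interpolation and normalization.} For a classical small-slope point $y\in V(\bar\Q_p)$ the specialization map \eqref{comps} is an isomorphism onto the one-dimensional $\cH$-eigenspace, spanned by the overconvergent modular symbol associated through Theorem \ref{ctrlthm} to the $p$-stabilized newform $f'_{\alpha'}$; its evaluation at $0-\infty$, restricted to $\Z_p^\times$, is the classical measure $\mu_{f',\alpha'}^\pm=\mu_y^\pm$ of \S\ref{classicdist}. As $\cM_\pm$ and $\cM_\pm^\vee$ are free of rank one over $T$ (Gorensteinness, \cite{BW}), a $T$-generator of $\cM_\pm$ specializes at $y$ to a nonzero multiple of this symbol, whence $\cL_p^\pm(y)=C^\pm(y)\,\mu_y^\pm$ with $C^\pm(y)\in\bar\Q_p^\times$. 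The same applies at $x$, where $\mu_x^\pm=\mu_{f,\alpha}^\pm$ is the classical measure (defined since $v_p(\alpha)=k/2<k+1$) and is the Mellin transform of the overconvergent symbol $\hat\varphi^\pm$ of \S\ref{actionUp}; rescaling $\phi^\pm$ by a unit of $T$ we normalize $C^\pm(x)=1$.

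\emph{The two-jet at $x$.} By Lemma \ref{charTx} one has $(\mathbb{T}^\pm_{w,\nu})_{(x)}=\bar\Q_p[\varepsilon]$ with $\varepsilon=U_p-\alpha$, and, since $\Hom^\pm_\Gamma(\Delta_0,D^\dagger_{\tilde K}[1])_{(x)}$ is free of rank one over $(\mathbb{T}^\pm_{W,\nu})_{(x)}$, a dimension count using \eqref{comps} and Theorem \ref{ctrlthm} forces the map $s_w$ of \eqref{defsw} to be an isomorphism. Hence $x[\varepsilon]^\ast$ identifies $\cM_\pm\otimes_{T,x[\varepsilon]}\bar\Q_p[\varepsilon]$ with the two-dimensional space $\Hom^\pm_\Gamma(\Delta_0,D^\dagger_{\tilde w}[1])_{(x)}=\bar\Q_p\hat\varphi^\pm\oplus\bar\Q_p\hat\varphi_{\rm ext}^\pm$, on which, by \eqref{actUp}, $\varepsilon$ acts by $\varepsilon\hat\varphi_{\rm ext}^\pm=\alpha\hat\varphi^\pm$ and $\varepsilon\hat\varphi^\pm=0$; moreover ${\rm Mel}$ sends $\hat\varphi^\pm\mapsto\mu_x^\pm$ and $\hat\varphi_{\rm ext}^\pm\mapsto\mu_x^{\rm ext,\pm}$, by the description of these measures as values of overconvergent modular symbols together with the first step. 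Now specialize the defining relation ${\rm Mel}|_{\cM_\pm}=\cL_p^\pm\cdot\phi^\pm$ along $x[\varepsilon]$. The $\bar\Q_p[\varepsilon]$-linear dual of the two-jet space is free of rank one, generated by $(\hat\varphi^\pm)^\ast$, since $\varepsilon\cdot(\hat\varphi^\pm)^\ast=\alpha\,(\hat\varphi_{\rm ext}^\pm)^\ast$ and $\varepsilon\cdot(\hat\varphi_{\rm ext}^\pm)^\ast=0$; so $\phi^\pm$ specializes to $u\cdot(\hat\varphi^\pm)^\ast$ for a unit $u=u_0+u_1\varepsilon\in\bar\Q_p[\varepsilon]^\times$. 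Writing $\cL_p^\pm(x[\varepsilon])=\mathcal A+\mathcal B\varepsilon$ with $\mathcal A,\mathcal B\in\cR$, expanding
\[
\mu_x^\pm\,(\hat\varphi^\pm)^\ast+\mu_x^{\rm ext,\pm}\,(\hat\varphi_{\rm ext}^\pm)^\ast=(\mathcal A+\mathcal B\varepsilon)\,u\,(\hat\varphi^\pm)^\ast
\]
by means of the $\varepsilon$-action, and comparing the $(\hat\varphi^\pm)^\ast$- and $(\hat\varphi_{\rm ext}^\pm)^\ast$-components, gives $u_0\mathcal A=\mu_x^\pm$ and $u_0\alpha\mathcal B+u_1\alpha\mathcal A=\mu_x^{\rm ext,\pm}$.

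\emph{Conclusion.} The normalization $C^\pm(x)=1$ is exactly $u_0=1$, so $\mathcal A=\mu_x^\pm$ and $\mathcal B=\alpha^{-1}\mu_x^{\rm ext,\pm}-u_1\mu_x^\pm$. Replacing $\phi^\pm$ by $c\,\phi^\pm$ for a unit $c\in T^\times$ specializing to $1-u_1\varepsilon$ at $x[\varepsilon]$ --- which exists because $x[\varepsilon]^\ast$ is a surjection of local rings --- leaves $\mathcal A$ unchanged, hence preserves $C^\pm(x)=1$, and replaces $\mathcal B$ by $\mathcal B+u_1\mu_x^\pm=\alpha^{-1}\mu_x^{\rm ext,\pm}$. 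Thus $\cL_p^\pm(x[\varepsilon])=\mu_x^\pm+\alpha^{-1}\mu_x^{\rm ext,\pm}\varepsilon$, as claimed.

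I expect the delicate point to be the third step: one must make precise, following the construction of $\cL_p^\pm$ in \S\ref{secLmu}, how $\cL_p^\pm(x[\varepsilon])$ is recovered from ${\rm Mel}$ through the Gorenstein $T$-basis $\phi^\pm$, in particular that the specialization of $\phi^\pm$ is a $\bar\Q_p[\varepsilon]$-basis of the dual of the two-jet space compatibly with the $\varepsilon=U_p-\alpha$ action of \eqref{actUp}. Granting this compatibility --- which is already implicit in the proof of the interpolation formula in \cite{BW} --- the remaining content is the linear algebra over $\bar\Q_p[\varepsilon]$ performed above, together with checking that the two normalizations of $\phi^\pm$ are available.
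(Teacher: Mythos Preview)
Your proposal is correct and follows essentially the same approach as the paper: both arguments reduce to the identification of the fiber $\cM_{(x)}\otimes_{R_w,w}\bar\Q_p$ with $\bar\Q_p\hat\varphi^\pm\oplus\bar\Q_p\hat\varphi_{\rm ext}^\pm$ via the control theorem, use the relation $(U_p-\alpha)\hat\varphi_{\rm ext}^\pm=\alpha\hat\varphi^\pm$ from \eqref{actUp} to determine the $\bar\Q_p[\varepsilon]$-module structure on the dual, and then read off $\cL_p^\pm(x[\varepsilon])$ from the defining relation ${\rm Mel}=\cL_p^\pm\cdot\phi^\pm$. The paper packages the linear algebra into a commutative diagram with the explicit $\bar\Q_p[\varepsilon]$-isomorphism $f\mapsto f(\hat\varphi_x^\pm)+\varepsilon\alpha^{-1}f(\hat\varphi_{x,{\rm ext}}^\pm)$ and absorbs your two-step normalization (first $u_0=1$, then kill $u_1$) into the phrase ``for a good choice of $\phi^\pm$'', whereas you unwind this explicitly by tracking the unit $u=u_0+u_1\varepsilon$; but the content is identical.
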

\begin{proof}
The first part of this theorem corresponds to \cite[Theorem 5.2]{BW}. We can extend here their arguments to deduce also the second part of the theorem. 

By definition 
\[
{\rm Mel}=\cL_p^\pm\phi^\pm\in\cR\hat\otimes_{\Q_p}\cM_\pm^\vee.
\]

For any point $y\in V(\bar\Q_p)$, write $w=\kappa(y)\in W(\bar\Q_p)$. If we denote $\cM_{(y)}:=\cM_\pm\otimes_T T_{(y)}$, we have
\[
\cM_{(y)}^\vee\otimes_{R_w,w}\bar\Q_p=\Hom_{R_w}(\cM_{(y)},R_w)\otimes_{R_w,w}\bar\Q_p=\Hom_{\bar\Q_p}(\cM_{(y)}\otimes_{R_w,w}\bar\Q_p,\bar\Q_p),
\]
since $\cM_{(y)}$ is a finite free $R_w$-module. By \cite[Proposition 4.3]{BW} and the control Theorem \ref{ctrlthm}, the composition \eqref{comps} provides an isomorphism 
\begin{eqnarray*}
\cM_{(y)}\otimes_{R_w,w}\bar\Q_p&=&\Hom_\Gamma^\pm(\Delta_0,D^\dagger_{\tilde w}[1])_{(y)}\simeq\Hom_\Gamma^\pm(\Delta_0,V(w))_{(y)}\\
&=&\left\{
\begin{array}{ll}\bar\Q_p\hat\varphi_y^\pm,&\mbox{regular case,}\\\bar\Q_p\hat\varphi_y^\pm+\bar\Q_p\hat\varphi_{y,\rm ext}^\pm,&\mbox{irregular case.}\end{array}
\right.
\end{eqnarray*}
We observe that, since
\[
T_{(y)}\otimes_{R_w,w}\bar\Q_p=\left\{
\begin{array}{ll}\bar\Q_p,&\mbox{regular case,}\\\bar\Q_p[\epsilon],&\mbox{irregular case,}\end{array}
\right.
\]
a $T_{(y)}\otimes_{R_w,w}\bar\Q_p$-basis for $\cM_{(y)}^\vee\otimes_{R_w,w}\bar\Q_p$ is given by $\phi_y^\pm$ with $\phi^\pm_y(\hat\varphi_y^\pm)=1$ and $\phi^\pm_{y}(\hat\varphi_{y,{\rm ext}}^\pm)=0$. Notice first that the point $y:T\rightarrow\bar\Q_p$ factors through $T_{(y)}\otimes_{R_w,w}\bar\Q_p\rightarrow\bar\Q_p$, and fits into the commutative diagram
\[
\xymatrix{
T_{(y)}\otimes_{R_w,w}\bar\Q_p\ar[rr]^{y}\ar[d]^{\cdot\phi_y^\pm}&&\bar\Q_p\ar[d]^{=}\\
\cM^\vee_{(y)}\otimes_{R_w,w}\bar\Q_p\ar[rr]^{f\mapsto f(\hat\varphi_y^\pm)}&&\bar\Q_p
}
\]
Since $\phi_y^\pm$ corresponds to the specialization of $\phi^\pm$ up to constant,
we compute 
\[
C^\pm(y)\cdot \mu_{y}^\pm=C^\pm(y)\cdot \hat\varphi_{y}^\pm(0-\infty)=C^\pm(y)\cdot {\rm Mel}(\hat\varphi_y^\pm)=\cL_p^\pm(y)\cdot\phi^\pm_y(\hat\varphi_y^\pm)=\cL_p^\pm(y),
\]
for some $C^\pm(y)\in\bar\Q_p$ so that $C^\pm(y)\cdot\phi^\pm=\phi^\pm_y$. This proves the first assertion. For the second, notice that $C^\pm(x)=1$ and we have the commutative diagram
\[
\xymatrix{
T_{(x)}\otimes_{R_w,w}\bar\Q_p\ar[rrrr]_{\simeq}^{x[\varepsilon]}\ar[d]^{\cdot\phi_x^\pm}&&&&\bar\Q_p[\varepsilon]\ar[d]^{=}\\
\cM^\vee_{(x)}\otimes_{R_w,w}\bar\Q_p\ar[rrrr]^{f\mapsto f(\hat\varphi_x^\pm)+\varepsilon\alpha^{-1} f(\hat\varphi_{x,{\rm ext}}^\pm)}&&&&\bar\Q_p[\varepsilon]
}
\]
since by \eqref{actUp} we have $(U_p-\alpha)\hat\varphi_{x,{\rm ext}}^\pm=\alpha\hat\varphi_x^\pm$. Again we compute
\begin{eqnarray*}
\mu_{x}^\pm+\alpha^{-1}\mu_{x}^{{\rm ext},\pm}\varepsilon&=&\hat\varphi_{x}^\pm(0-\infty)+\alpha^{-1}\hat\varphi_{x,{\rm ext}}^\pm(0-\infty)\varepsilon={\rm Mel}(\hat\varphi_x^\pm)+\varepsilon\alpha^{-1} {\rm Mel}(\hat\varphi_{x,{\rm ext}}^\pm)\\
&=&\cL_p^\pm(x[\varepsilon])\cdot\left(\phi_x^\pm(\hat\varphi_x^\pm)+\varepsilon\alpha^{-1}\phi^\pm_x(\hat\varphi_{x,{\rm ext}}^\pm)\right)=\cL_p^\pm(x[\varepsilon]),
\end{eqnarray*}
and the result follows.
\end{proof}

Notice that there is no canonical choice of $\phi_x^\pm$ even though we impose $C^\pm(x)=1$. In fact, $(1+\varepsilon c)\cdot\phi_x^\pm$ with $c\in\bar\Q_p$ is also a basis so that $C^\pm(x)=1$. For any such a change of basis we obtain
\[
\cL^\pm_p(x[\epsilon])=(1+\varepsilon c)^{-1}(\mu_{x}^\pm+\alpha^{-1}\mu_{x}^{{\rm ext},\pm}\varepsilon)=\mu_{x}^\pm+(\alpha^{-1}\mu_{x}^{{\rm ext},\pm}-c\mu_x^{\pm})\varepsilon.
\]
The following result does not depend on the choice of the generator $\phi^\pm$:
\begin{corollary}
Let $t\in T$ the element corresponding to $U_p-\alpha$. Then
\[
\frac{\partial \cL_p^\pm}{\partial t}(x)\in  \alpha^{-1}\mu_{x}^{{\rm ext},\pm}+\bar\Q_p\mu_x^{\pm}.
\]
\end{corollary}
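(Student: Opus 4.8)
The plan is to deduce the corollary formally from the theorem just proved, the point being that the dual-number point $x[\varepsilon]$ of \eqref{defsw} computes exactly the first-order jet of $\cL_p^\pm$ along the direction $t$.

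First I would unwind the definition of $\frac{\partial\cL_p^\pm}{\partial t}(x)$. By Lemma \ref{charTx} the fibre ring $(\mathbb{T}^\pm_{w,\nu})_{(x)}\otimes_{R_{(w)},w}\bar\Q_p$ is $\bar\Q_p[\varepsilon]=\bar\Q_p[X]/X^2$, and under this identification the element $t=U_p-\alpha$ maps to the generator $\varepsilon$, while the further quotient by $\varepsilon$ recovers the evaluation $x:T\to\bar\Q_p$. Hence the induced map $1\hat\otimes x[\varepsilon]^\ast:\cR\hat\otimes_{\Q_p}T\to\cR\otimes_{\Q_p}\bar\Q_p[\varepsilon]$ sends any $F$ to $F(x)+\varepsilon\cdot g$ for a well-defined $g\in\cR$; since $\varepsilon^2=0$ kills all higher-order contributions, $g$ is precisely what is meant by $\frac{\partial F}{\partial t}(x)$. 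In other words, the $\varepsilon$-linear coefficient of $F(x[\varepsilon])$ \emph{is} the directional derivative along $t$.

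Next I would apply this to $F=\cL_p^\pm$. The theorem above already computes $\cL_p^\pm(x[\varepsilon])$: for the normalizing basis $\phi^\pm$ chosen there one obtains $\mu_x^\pm+\alpha^{-1}\mu_x^{{\rm ext},\pm}\varepsilon$, and for any other normalized basis $(1+\varepsilon c)\phi^\pm$ (with $c\in\bar\Q_p$) the element $\cL_p^\pm$ is rescaled by $(1+\varepsilon c)^{-1}$, yielding $\cL_p^\pm(x[\varepsilon])=\mu_x^\pm+(\alpha^{-1}\mu_x^{{\rm ext},\pm}-c\,\mu_x^\pm)\varepsilon$. Reading off the $\varepsilon$-coefficient and using the previous paragraph gives $\frac{\partial\cL_p^\pm}{\partial t}(x)=\alpha^{-1}\mu_x^{{\rm ext},\pm}-c\,\mu_x^\pm\in\alpha^{-1}\mu_x^{{\rm ext},\pm}+\bar\Q_p\mu_x^\pm$, which is the asserted membership; the coset on the right is visibly independent of the choice of $\phi^\pm$, so the statement is intrinsic.

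I expect the only genuinely delicate point to be the identification carried out in the second paragraph: one must verify that the construction \eqref{defsw} of $x[\varepsilon]^\ast$, combined with Lemma \ref{charTx}, really does send the Hecke element $t=U_p-\alpha$ to the generator of the dual numbers, so that the $\varepsilon$-jet of $\cL_p^\pm$ is the $t$-derivative and not a derivative along some other direction (for instance along the weight). Once this compatibility is pinned down, the rest is the immediate comparison of $\varepsilon$-coefficients recorded above.
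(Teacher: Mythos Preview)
Your proposal is correct and follows essentially the same route as the paper: the paper does not give a separate proof of the corollary but treats it as immediate from the preceding theorem together with the change-of-basis computation $\cL^\pm_p(x[\varepsilon])=\mu_{x}^\pm+(\alpha^{-1}\mu_{x}^{{\rm ext},\pm}-c\mu_x^{\pm})\varepsilon$, which is exactly what you do. The identification of $t=U_p-\alpha$ with the generator $\varepsilon$ that you flag as the delicate point is precisely the content of Lemma~\ref{charTx}, so there is no gap.
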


\bibliographystyle{plain}
\bibliography{biblio}
\end{document}